\setlist[enumerate]{itemsep=0.5ex}
\theoremstyle{plain}
\newtheorem{theorem}{Theorem}[section]
\newtheorem{proposition}[theorem]{Proposition}
\newtheorem{lemma}[theorem]{Lemma}
\newtheorem{corollary}[theorem]{Corollary}
\newtheorem{question}[theorem]{Question}
\newtheorem{conjecture}[theorem]{Conjecture}
\theoremstyle{definition} 
\newtheorem{definition}[theorem]{Definition}
\newtheorem*{claim*}{Claim}
\theoremstyle{remark}
\numberwithin{equation}{section}
\newcommand{\N}{\mathbb{N}}
\newcommand{\R}{\mathbb{R}}
\newcommand{\cP}{\mathcal{P}}
\newcommand{\cQ}{\mathcal{Q}}
\newcommand{\Bigwedge}{\mathord{\adjustbox{raise=.4ex, totalheight=.7\baselineskip}{$\bigwedge$}}}
\newcommand{\ind}{\textup{Ind}}
\newcommand{\id}{\mathrm{id}}
\newcommand{\supp}{\mathrm{supp}}
\newcommand{\prop}{\mathrm{prop}}
\newcommand{\sph}{\mathbb{S}}
\newcommand{\ev}{ev}
\newcommand{\interior}[1]{%
	{\kern0pt#1}^{\mathrm{\,o}}%
}
\let\save@mathaccent\mathaccent
\newcommand*\if@single[3]{%
	\setbox0\hbox{${\mathaccent"0362{#1}}^H$}%
	\setbox2\hbox{${\mathaccent"0362{\kern0pt#1}}^H$}%
	\ifdim\ht0=\ht2 #3\else #2\fi
}
\newcommand*\rel@kern[1]{\kern#1\dimexpr\macc@kerna}
\newcommand*\wideaccent[2]{\@ifnextchar^{{\wide@accent{#1}{#2}{0}}}{\wide@accent{#1}{#2}{1}}}
\newcommand*\wide@accent[3]{\if@single{#2}{\wide@accent@{#1}{#2}{#3}{1}}{\wide@accent@{#1}{#2}{#3}{2}}}
\newcommand*\wide@accent@[4]{%
	\begingroup
	\def\mathaccent##1##2{%
		\let\mathaccent\save@mathaccent
		\if#42 \let\macc@nucleus\first@char \fi
		\setbox\z@\hbox{$\macc@style{\macc@nucleus}_{}$}%
		\setbox\tw@\hbox{$\macc@style{\macc@nucleus}{}_{}$}%
		\dimen@\wd\tw@
		\advance\dimen@-\wd\z@
		\divide\dimen@ 3
		\@tempdima\wd\tw@
		\advance\@tempdima-\scriptspace
		\divide\@tempdima 10
		\advance\dimen@-\@tempdima
		\ifdim\dimen@>\z@ \dimen@0pt\fi
		\rel@kern{0.6}\kern-\dimen@
		\if#41
		#1{\rel@kern{-0.6}\kern\dimen@\macc@nucleus\rel@kern{0.4}\kern\dimen@}%
		\advance\dimen@0.4\dimexpr\macc@kerna
		\let\final@kern#3%
		\ifdim\dimen@<\z@ \let\final@kern1\fi
		\if\final@kern1 \kern-\dimen@\fi
		\else
		#1{\rel@kern{-0.6}\kern\dimen@#2}%
		\fi
	}%
	\macc@depth\@ne
	\let\math@bgroup\@empty \let\math@egroup\macc@set@skewchar
	\mathsurround\z@ \frozen@everymath{\mathgroup\macc@group\relax}%
	\macc@set@skewchar\relax
	\let\mathaccentV\macc@nested@a
	\if#41
	\macc@nested@a\relax111{#2}%
	\else
	\def\gobble@till@marker##1\endmarker{}%
	\futurelet\first@char\gobble@till@marker#2\endmarker
	\ifcat\noexpand\first@char A\else
	\def\first@char{}%
	\fi
	\macc@nested@a\relax111{\first@char}%
	\fi
	\endgroup
}
\newcommand\overbar{\wideaccent\overline}
\newcommand*{\transpose}{%
	{\mathpalette\@transpose{}}%
}
\newcommand*{\@transpose}[2]{%
	\raisebox{\depth}{$\m@th#1\intercal$}%
}
\providecommand\@dotsep{5}
\renewcommand{\listoftodos}[1][\@todonotes@todolistname]{%
	\@starttoc{tdo}{#1}}
\begin{document}
\title[$\ell^p$-coarse Baum--Connes conjecture]{$\ell^p$-coarse Baum--Connes conjecture for $\ell^{q}$-coarse embeddable spaces}

\author{Jinmin Wang}
\address[Jinmin Wang]{Institute of Mathematics, Chinese Academy of Sciences}
\email{jinmin@amss.ac.cn}

	\author{Zhizhang Xie}
\address[Zhizhang Xie]{Department of Mathematics, Texas A\&M University }
\email{xie@math.tamu.edu}
\thanks{The second author is partially supported by NSF 1952693 and 2247322.}

\author{Guoliang Yu}
\address[Guoliang Yu]{ Department of
	Mathematics, Texas A\&M University}
\email{guoliangyu@math.tamu.edu}
\thanks{The third author is partially supported by NSF 1700021, 2000082, and the Simons Fellows Program.}

\author{Bo Zhu}
\address[Bo Zhu]{ Department of Mathematics, Texas A\&M University }
\email{zhubomath@gmail.com}

\begin{abstract}
	We prove an $\ell^p$-version of the coarse Baum--Connes conjecture for spaces that coarsely embedds into $\ell^q$-spaces for any $p$ and $q$ in $[1,\infty)$.  
\end{abstract}
\maketitle

\section{Introduction}
The higher index theory is a far-reaching generalization of the Atiyah--Singer index theorem to a large class of not necessarily compact spaces  \cite{willett2020higher}. The higher index of an elliptic differential operator on a complete manifold takes value in $K$-theory of the Roe algebra of the underlying space \cite{Roe,Roecoarse}. The nonvanishing of this higher index serves as an obstruction to the invertibility of the elliptic operator. As an application,  if $(M, g)$ is a complete Riemannian spin manifold such that the higher index of its associated Dirac operator is nonzero, then it follows that $M$ does not support any complete Riemannian metric that is quasi-isometric to $g$ and has uniformly positive scalar curvature. For example, if $N$ is a closed aspherical manifold such that its fundamental group $\pi_1(N)$ has finite asymptotic dimension, then the universal cover of $N$ does not admit a complete Riemannian metric that  is quasi-isometric to $\pi_1(N)$ and has uniformly positive scalar curvature \cite{Yu}.  Consequently, such closed aspherical manifolds themselves do not admit Riemannian metrics of positive scalar curvature. 

In general, computing the higher index of an elliptic differential operator is a difficult task. The coarse Baum–Connes conjecture provides an algorithmic framework for this purpose. Specifically,  for a given metric space $X$, one has the coarse Baum--Connes assembly map $\mu\colon K_\ast(EX) \to K_\ast (C^\ast(X))$,   which maps  each $K$-homology class in the coarse classifying space $EX$ of $X$ to its corresponding higher index class in the $K$-theory of the Roe algebra $C^\ast(X)$ of $X$.   The coarse Baum--Connes conjecture \cite{HigsonRoecBc,YucBC}, which is a coarse geometric analogue of the original Baum--Connes conjecture \cite{BaumConnesHigson}, states that the coarse Baum-Connes assembly map is an isomorphism for any discrete metric space $X$ of bounded geometry.  Recall that a discrete metric space $X$ is said to have bounded geometry if for any $r>0$ there is $N>0$, such that the number of elements in any $r$-ball of $X$ is $\leq N$. See Section \ref{sec:preliminary} for the precise definition of this coarse assembly map. Since $K$-homology can, in principle, be computed using standard tools from homology theory, the coarse Baum--Connes conjecture—when true—yields an effective method for computing both the higher index of elliptic operators and the $K$-theory of the Roe algebra.  We should also mention that the conjectural injectivity of the Baum-Connes coarse assembly map is often called the coarse Novikov conjecture.

Gromov suggested that the coarse embeddability of spaces with bounded geometry is relevant to the Baum--Connes conjecture and  Novikov conjecture and their coarse analogues \cite{Gromov1993asymptotic}. We recall that a metric space $X$ coarsely embedds into a metric space $Y$, if there exists a map $f\colon X\to Y$ together with two non-decreasing function $\rho_\pm\colon \R_{\geq 0}\to \R_{\geq 0}$ such that  $\lim_{x\to\infty}\rho_-(x)=\infty$  and 
$$\rho_-(d_X(x_1,x_2))\leq d_Y(f(x_1),f(x_2))\leq \rho_+(d_X(x_1,x_2)),~\forall x_1,x_2\in X.$$
Indeed, the third author proved that the coarse Baum--Connes conjecture holds for bounded geometry spaces that coarsely embedds into Hilbert space \cite{Yucoarseembed}, generalizing his earlier work on the validity of the coarse Baum--Connes conjecture for bounded geometry spaces with finite asymptotic dimension \cite{Yu}. In general, the surjective of the coarse assembly map holds for relative expanders \cite{MR4585998}, but fails for expanders with large girth \cite{WILLETT20121380}, which does note coarsely embed into $\ell^q$ for any $q\in[1,+\infty)$ \cite{Gromov2010}.

In \cite{MR2980001}, Kasparov and the third author proved the Novikov conjecture for groups that coarsely embed into Banach spaces with property (H). Later, a coarse analogue of this result was proved by  Chen, Wang, and the third author \cite{MR3325537}. Namely, they showed that the coarse Novikov conjecture for bounded geometry spaces that coarsely embeds into Banach spaces with property (H). As a corollary, bounded geometry spaces that coarsely embed into $\ell^q$-spaces satisfy the coarse Novikov conjecture.  
In this paper, we shall prove  the coarse Baum--Connes conjecture for bounded geometry spaces that coarsely embed into $\ell^q$-spaces. 
\begin{theorem}\label{thm:lqcBCIntro}
	Let $(X,d)$ be a discrete metric space with bounded geometry. If  $X$ coarsely embeds into $\ell^q$ for some $q\in[1,\infty)$,  then the coarse Baum--Connes Conjecture holds for $X$.
\end{theorem}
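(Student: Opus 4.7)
The plan is to adapt the strategy that the third author introduced in \cite{Yucoarseembed} for the Hilbert case to the $\ell^p$-Roe-algebra setting, using the coarse embedding into $\ell^q$ to manufacture a Bott-type element. Throughout, I would write $B^p(X)$ for the appropriate $\ell^p$-Roe algebra and $B^p_L(X)$ for its localization algebra, so that the $\ell^p$-coarse assembly map takes the form $\mu\colon K_\ast(B^p_L(X))\to K_\ast(B^p(X))$ and the coarse $K$-homology $K_\ast(EX)$ appearing in the introduction is identified with $K_\ast(B^p_L(X))$. The goal is to factor $\mu$ through a twisted version in which the ``spatial'' coordinates supplied by the coarse embedding can be deformed away, and then to invert that step by an analogue of Bott periodicity suited to $L^p$-operator algebras.

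To set this up I would fix a coarse embedding $f\colon X\to\ell^q$ with control functions $\rho_\pm$ and introduce twisted $\ell^p$-Roe and localization algebras $B^p(X;\ell^q)$ and $B^p_L(X;\ell^q)$. Their elements are families of operators parametrized by finite-dimensional affine subspaces $V\subset\ell^q$ that capture the relevant pieces of $f(X)$, with fibres given by an algebra of Bott-like functions on $V$ tensored with the $\ell^p$-Roe algebra; this plays the role of the algebra $\mathcal{A}(V)$ used in \cite{Yucoarseembed} and \cite{MR2980001}. The embedding $f$ then provides a Bott transformation $\beta$, induced by pointwise multiplication by a Bott-type element built from $f$, that intertwines the untwisted and twisted assembly maps in a commutative square.

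The proof then splits into two parts. The first is to show that the twisted assembly map $\mu^{\mathrm{tw}}\colon K_\ast(B^p_L(X;\ell^q))\to K_\ast(B^p(X;\ell^q))$ is an isomorphism. I would do this by a Mayer--Vietoris/cutting-and-pasting argument that reduces the statement to finite subsets of $X$, together with a strong Lipschitz homotopy invariance for $K_\ast(B^p_L(\,\cdot\,;\ell^q))$ in the spirit of \cite{MR3325537}; the bounded-geometry hypothesis is precisely what allows the decomposition to respect the $\ell^p$-propagation structure. The second part is a Bott-periodicity statement: the transformation $\beta$ is shown to induce isomorphisms on the $K$-theory of both the Roe algebra and its localization. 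Combining these ingredients with commutativity of the square yields the desired isomorphism for the untwisted assembly map $\mu$.

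The hardest step, as I see it, will be the $K$-theoretic invertibility of the Bott transformation in the $\ell^p/\ell^q$ setting. In the Hilbert case one has the full machinery of equivariant $KK$-theory and the Kasparov Dirac--Bott element; for general $\ell^p$-operator algebras no such theory is available off the shelf, so the Bott class and its asymptotic inverse must be constructed explicitly and shown to be mutually inverse on $K$-theory by a direct homotopy argument compatible with the propagation estimates built into the $\ell^p$-Roe algebra. A further subtlety is that when $q\neq 2$ the geometry of $\ell^q$ is non-Euclidean, so the finite-dimensional Bott elements attached to each affine subspace $V\subset\ell^q$, and the interpolation between them as $V$ grows along a directed system of subspaces, must be chosen with enough care to stay uniformly bounded in the $\ell^p$-operator norm. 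Once this periodicity step is in hand, the Mayer--Vietoris reduction and the final diagram chase should follow the Hilbert blueprint of \cite{Yucoarseembed} with only technical modifications.
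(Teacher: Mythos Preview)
Your outline has the right global architecture---twisted algebras, a Mayer--Vietoris reduction for the twisted assembly map, and a diagram chase---but it diverges from the paper in two ways, and one of them is a genuine gap.

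First, a minor point: Theorem~\ref{thm:lqcBCIntro} is the classical $C^\ast$-algebra statement ($p=2$); the $\ell^p$-version is the separate Theorem~1.2. Your write-up conflates the two.

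The substantive issue is your ``second part'': you propose to prove that the Bott transformation $\beta$ is a $K$-theory isomorphism on both the Roe and localization algebras, by building an explicit asymptotic Dirac inverse. The paper does \emph{not} do this, and it is unclear how one would: for $q>2$ there is no off-the-shelf Dirac element compatible with the $\ell^q$ geometry, and no $KK$- or $E$-theoretic duality to invoke. Instead the paper bypasses full Bott periodicity. It constructs an index map $\ind\colon K_\ast(C^\ast(P))\to K_\ast(A(P,E))$ (the Bott direction) and a forgetful map $\imath_\ast\colon K_\ast(A(P,E))\to K_\ast(C^\ast(P,E))$, and shows via an Eilenberg swindle that the \emph{composition} $\imath_\ast\circ\ind$ coincides with the isomorphism $\id_\ast$ given by tensoring with a rank-one projection. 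Neither $\ind$ nor $\imath_\ast$ is shown to be an isomorphism individually. Together with the twisted Baum--Connes isomorphism this is already enough for the diagram chase, so the hard invertibility step you flag never has to be confronted.

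The second difference is the technical engine behind $\ind$. You anticipate that the non-Euclidean geometry of $\ell^q$ forces delicate uniform bounds on the finite-dimensional Bott elements, but you do not name the mechanism. The paper's device is the Mazur map $\psi\colon S_q(1)\to S_2(1)$, $v\mapsto \sum v_i|v_i|^{q/2-1}e_i$, extended homogeneously to $\Psi\colon (E,\|\cdot\|_q)\to(E,\|\cdot\|_2)$. For $q\ge2$ this $\Psi$ is Lipschitz with a dimension-independent constant, and one sets $C_v(w)=c(\Psi(w-v))$ as the Clifford potential. The resulting Bott--Dirac operator $B_{s,v}=s^{-1}D+C_v$ is an honest $\ell^2$-operator (so Fredholm with index~$1$), yet the estimates on $F_{s,v}-F_{s,u}$, on commutators with cut-offs, and on the $E$-support are all controlled by the $\ell^q$ distance $\|v-u\|_q$. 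This is exactly what makes the index map land in the twisted algebra with the correct support conditions, and it is the step your outline leaves unspecified.

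Finally, the paper's reduction is not via a directed system of affine subspaces of $\ell^q$: one first reduces to $X=\bigsqcup X_n$ a coarse disjoint union of finite sets, each $X_n$ embedding into a fixed finite-dimensional $(E_n,\|\cdot\|_q)$, and the twisted algebra is built block-diagonally over the $E_n$. The Mayer--Vietoris argument for the twisted assembly map is then carried out with respect to the $\ell^q$-metric on these finite-dimensional pieces, using that the family $\{B_q(f(x),R)\}_{x\in X}$ has finite multiplicity by bounded geometry.
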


By a theorem of Johnson and Randrianarivony \cite{MR2196037}, one knows that  $\ell^q$   with $q>2$ does not coarsely embed into a Hilbert space. So our main theorem (Theorem \ref{thm:lp-lqcBCIntro}) provides  a strengthening of the result in \cite{Yucoarseembed,MR3325537}. In particular, as a result of the descent principle \cite{Roe}, we obtain a new proof of the Novikov conjecture for discrete groups that coarsely embed into $\ell^q$ \cite{MR2980001}, by taking $X$ as its Cayley graph.

$\ell^p$ techniques play an essential role in Lafforgue's work on the Baum-Connes conjecture  \cite{MR1914617}. In our setting, our approach  yields an $\ell^p$-analogue of the coarse Baum-Connes conjecture. More precisely, this $\ell^p$- analogue  of the coarse Baum--Connes conjecture states that the $\ell^p$-Baum-Connes assembly map 
\[ \mu\colon K_\ast(EX) \to K_\ast (C^p(X)) \]
is an isomorphism for any discrete metric space of bounded geometry $X$, where  $K_\ast(EX)$ is   again the $K$-homology of the coarse classifying space $EX$ of $X$ and  $C^p(X)$ is an $\ell^p$-analogue of the Roe algebra of $X$, which is a Banach algebra acting on an $\ell^p$-space. 

In \cite{Zhangzhoulp}, Zhang and Zhou introduced an $\ell^p$-analogue of Roe algebras and the corresponding $\ell^p$-coarse Baum--Connes conjecture. They proved the $\ell^p$-coarse Baum--Connes isomorphism holds when the space $X$ has finite asymptotic dimension (thus coarsely embeddable into Hilbert space). In \cite{MR4357067}, Lin and Wang proved an $\ell^p$-coarse Novikov conjecture for spaces coarsely embeddable  into a Hadamard manifold. In \cite{MR4832450}, Zhang proved an $\ell^p$-coarse Baum--Connes conjecture for certain metric cones. We should remark that the $\ell^p$-coarse Baum--Connes conjecture fails in general. For example, Chung and Nowak showed that certain expanders are counterexamples to the $\ell^p$-coarse Baum--Connes conjecture \cite{MR4565436}.

In this paper, we introduce a slightly different definition of $\ell^p$-Roe algebras (see Definition \ref{def:lpRoe} and the discussion below it), and prove the corresponding $\ell^p$-coarse Baum--Connes conjecture.

\begin{theorem}\label{thm:lp-lqcBCIntro}
	Let $(X,d)$ be a discrete metric space with bounded geometry. If  $X$ coarsely embeds into $\ell^q$ for some $q\in[1,\infty)$,  then for any $p\in[1,\infty)$, the $\ell^p$-coarse Baum--Connes Conjecture (see Conjecture \ref{conj:l^pCBC}) holds for $X$.
\end{theorem}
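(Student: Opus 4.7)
The plan is to adapt the strategy of the third author's proof for Hilbert-space embeddable spaces, and of Kasparov with the third author for the Novikov direction, to the $\ell^p/\ell^q$ setting. Fix a coarse embedding $f\colon X \to \ell^q$. The first task is to build an $\ell^p$-twisted Roe algebra $C^p(X;\ell^q)$, constructed as an inductive limit over finite-dimensional affine subspaces $V_n \subset \ell^q$ of $\ell^p$-Roe algebras with coefficients in Banach algebras $\mathcal{A}(V_n)$ acting on $\ell^p$-spaces and modelled on the property (H) coefficient algebras of Kasparov and the third author. The embedding $f$ will be used to impose a localization condition: for each matrix entry indexed by $x \in X$, the $\mathcal{A}(V_n)$-valued coefficient is required to concentrate near $f(x) \in V_n$ at a scale controlled by the coarse embedding functions $\rho_\pm$.

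The proof will then proceed through three main ingredients: first, a twisted $\ell^p$-assembly map
\[ \mu_{\ell^q}\colon K_\ast(EX) \to K_\ast\bigl(C^p(X;\ell^q)\bigr) \]
constructed via an $\ell^p$-version of the localization algebra formalism; second, a Bott-type map $\beta_X\colon K_\ast(C^p(X)) \to K_\ast(C^p(X;\ell^q))$ built from finite-dimensional $\ell^p/\ell^q$-Bott elements attached to $f$; and third, a factorization identity $\mu_{\ell^q} = \beta_X \circ \mu$ verified at the level of cycles. A diagram chase then reduces Theorem \ref{thm:lp-lqcBCIntro} to the twisted statement that $\mu_{\ell^q}$ is an isomorphism, which we attack by a Mayer--Vietoris / cutting-and-pasting argument along the affine structure of $\ell^q$. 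Writing $\ell^q = \bigcup_n V_n$, we cover the twisted algebra by pieces indexed by bounded $V_n$-neighborhoods and reduce the computation on each such piece to a finite-dimensional $\ell^p/\ell^q$-Bott periodicity statement. The inductive limit is controlled by $\rho_\pm$, so that enlarging $n$ enlarges supports only in a controlled way.

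The main obstacle will be establishing the requisite $\ell^p$-Bott periodicity for the $\ell^q$-Clifford-type Banach algebras on finite-dimensional $V_n$. Unlike the Hilbert-space case, self-adjointness and continuous functional calculus are unavailable; instead we will follow Lafforgue's approach and work in Banach $K$-theory via holomorphic functional calculus, producing explicit polynomial approximations and resolvent estimates that are uniform in $\dim V_n$. Matching these $\ell^p$-analytic estimates with the $\ell^q$-geometric data from $f$, so that the finite-dimensional Bott elements glue into well-defined bounded elements of the inductive limit $K_\ast(C^p(X;\ell^q))$, is where the bulk of the technical work must go. Once this is achieved, the factorization $\mu_{\ell^q} = \beta_X \circ \mu$ combined with the isomorphism of $\mu_{\ell^q}$ yields injectivity of $\mu$ directly and surjectivity via a section on the twisted side, thereby establishing the $\ell^p$-coarse Baum--Connes conjecture for $X$.
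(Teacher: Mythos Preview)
Your overall architecture --- twist by Bott data coming from the embedding, factor the untwisted assembly map through the twisted one, then prove the twisted isomorphism by a Mayer--Vietoris argument over the $\ell^q$-geometry --- matches the paper's. But the implementation you sketch diverges at the two technical points that actually make the argument go through, and the route you propose for them is substantially harder and not clearly feasible.

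First, the paper does \emph{not} build Bott elements in $\ell^q$-Clifford-type Banach algebras. Instead it uses the Mazur map $\psi\colon S_q(1)\to S_2(1)$ and its homogeneous extension $\Psi$ to define the potential $C_v(w)=c(\Psi(w-v))$ acting by ordinary Clifford multiplication on the \emph{Hilbert} space $L^2(E,\Bigwedge^*E)$. The Bott--Dirac operator $B_{s,v}=s^{-1}D+C_v$ is then genuinely self-adjoint, so continuous functional calculus, Fredholmness, and the index-$1$ computation are all available with no Banach-algebra subtleties. The $\ell^q$-geometry enters only through the identity $\|\Psi(w-v)\|_2=\|w-v\|_q$ and the dimension-free Lipschitz bound on $\Psi$, which supply exactly the uniform-in-$\dim E_n$ estimates you correctly anticipate needing. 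Your plan to obtain that uniformity via holomorphic functional calculus and polynomial approximation in Banach $K$-theory is much harder, and the paper simply sidesteps it.

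Second --- and this is a point your proposal does not touch --- the twisted module is the mixed space $\ell^p(Z,\ell^p(\mathbb{N},H\otimes H_E))$ with $H_E$ Hilbert. The index map requires that for $T$ bounded on $\ell^p(Z,\ell^p(\mathbb{N},H))$ the operator $T\otimes 1$ remain bounded after tensoring with $H_E$. For $p\neq 2$ this is not automatic; the paper proves it as a vector-valued Marcinkiewicz--Zygmund inequality, with Grothendieck's inequality as the essential input. Without this step one cannot even form the almost-idempotent representing the twisted class. Any completion of your proposal needs either this ingredient or a substitute for it; the Lafforgue-style resolvent estimates you mention do not address it.
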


Our strategy for proving Theorem \ref{thm:lp-lqcBCIntro} is inspired by the work of  \cite{ MR3325537, MR2980001, willett2020higher}. In particular, we use the coarse embedding of $X$ into $\ell^q$ and the Mazur's map (see line \eqref{eq:Mazur} for the precise definition) to construct a Bott--Dirac operator acting on finite dimensional Euclidean spaces. We twist each $K$-theory element of the $\ell^p$-Roe algebra $C^p(X)$ with this Bott--Dirac operator. The resulting element acts on a Banach space equipped with a norm that mixes $\ell^p$ and $\ell^2$ norms. A key ingredient to deal with the $\ell^p$-$\ell^2$-mixed norm is a vector-valued version of the  Marcinkiewicz--Zygmund inequality (Theorem \ref{thm:VectorMarZygmundIneq}), which itself is proved by using the Grothendieck inequality. Ultimately, we prove Theorem \ref{thm:lp-lqcBCIntro} by  a Mayer--Vietoris argument, where the cutting-and-pasting is carried out with respect to the $\ell^q$-metric.

This paper is organized as follows. In Section \ref{sec:lqCBC}, we recall the classic definition of the $C^*$-version of Roe algebras and the coarse Baum--Connes map, and prove Theorem \ref{thm:lp-lqcBCIntro} for $p=2$. We focus on the geometry of the embedding into $\ell^q$-space, deferring the discussion of the $\ell^p$-Banach algebras. We will introduce the $\ell^p$-Roe algebras and the $\ell^p$-coarse Baum--Connes conjecture in Section \ref{sec:lplqCBC} and prove Theorem \ref{thm:lp-lqcBCIntro} in full generality. 

\section{Coarse Baum--Connes conjecture for coarsely embeddable spaces}\label{sec:lqCBC}
In this section, we focus on the $C^*$-algebra version of coarse Baum--Connes conjecture for spaces that coarsely embedds into $\ell^q$-spaces for $q\in[1,\infty)$. Our proof is inspired by the work from  \cite{Yucoarseembed,willett2020higher}.
\begin{definition}\label{def:coarseembed}
	Suppose that $(X,d)$ is a proper metric space and $\ell^q=\ell^q(\mathbb N)$.  $X$ is said to coarsely embed into $\ell^q$ if there exists a map $f\colon X\to \ell^q$  and  $\rho_\pm\colon \R_{\geq 0}\to\R_{\geq 0}$  such that
	\begin{enumerate}
		\item  $\lim_{x\to\infty}\rho_-(x)=+\infty$  and
		
		\item $\rho_-(d(x,y))\leq \|f(x)-f(y)\|_q\leq \rho_+(d(x,y))$.
	\end{enumerate}
\end{definition}

The main theorem of this section is as follows.
\begin{theorem}\label{thm:lqcBC}
	Let $(X,d)$ be a proper metric space that $X$ coarsely embeds into $\ell^q$ for some $q\in[1,\infty)$. Then the coarse Baum--Connes Conjecture \ref{conj:cBC} holds for $X$.
\end{theorem}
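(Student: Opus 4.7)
The plan is to adapt the strategy of Yu \cite{Yucoarseembed}, where the coarse Baum--Connes conjecture is established for spaces coarsely embeddable into a Hilbert space, to the present setting in which the target is $\ell^q$. The first step is the standard reformulation via localization algebras: one rewrites $K_*(EX) \cong \lim_{d\to\infty} K_*(C^*_L(P_d(X)))$ and $K_*(C^*(X)) \cong \lim_{d\to\infty} K_*(C^*(P_d(X)))$, where $P_d(X)$ denotes the Rips complex of $X$ at scale $d$, so that the coarse assembly map $\mu$ becomes the evaluation-at-zero map on localization $C^*$-algebras. Proving $\mu$ is an isomorphism then reduces, after introducing a Bott--Dirac twist, to showing that a corresponding twisted evaluation map is an isomorphism.

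The geometric input is the coarse embedding $f\colon X \to \ell^q$ combined with the Mazur map
\[
	M_{q,2}\colon S_{\ell^q}\to S_{\ell^2},\qquad M_{q,2}(\xi)_i = |\xi_i|^{q/2}\operatorname{sgn}(\xi_i),
\]
extended radially to all of $\ell^q$. Although $M_{q,2}\circ f$ is not itself a coarse embedding into a Hilbert space when $q>2$ (by Johnson--Randrianarivony \cite{MR2196037}), the Mazur map is a uniform homeomorphism on bounded subsets, and this is enough to build, for each finite-dimensional coordinate subspace $V_n\subset \ell^2$, a Bott--Dirac operator $B_n$ on $L^2(V_n,\Bigwedge^*V_n^*)$ whose $K$-theory class is the Bott generator. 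Assembling the $B_n$ along the directed system of finite-dimensional subspaces yields a formal infinite-dimensional Bott--Dirac element $B$, from which one constructs a twisted Roe algebra $C^*_{\mathrm{tw}}(P_d(X))$ whose elements are operator-valued kernels on $P_d(X)$ with values in compact operators on the Bott--Dirac Hilbert space, with propagation measured jointly with respect to the $\ell^q$-distance on $f(X)\subset \ell^q$.

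I would next produce a Bott homomorphism $\beta\colon C^*(P_d(X)) \to C^*_{\mathrm{tw}}(P_d(X))$ together with a Dirac-type $K$-theoretic left inverse $\alpha$ satisfying $\alpha\circ \beta = \mathrm{id}$ on $K$-theory, via the standard Clifford-algebraic calculation of the Bott--Dirac index. The bulk of the argument is then to show that the twisted assembly map itself is an isomorphism. For this one runs a Mayer--Vietoris induction whose decompositions are taken with respect to the $\ell^q$-metric on $\ell^q$, not with respect to the metric on $X$: a bounded-radius $\ell^q$-ball pulls back under $f$ to a region of $X$ that, once combined with the Bott--Dirac localization, behaves essentially like a space of finite asymptotic dimension, for which the coarse Baum--Connes isomorphism is known by \cite{Yu}. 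Iterating the Mayer--Vietoris sequence over an exhaustion of $\ell^q$ by such balls and passing to the limit $d\to \infty$ then delivers the conjecture.

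The main obstacle I anticipate is controlling propagation under the Mazur map, which is only H\"older continuous (of exponent $\min(q/2,1)$) on $\ell^q$. As a consequence, the twisted algebra, the Bott and Dirac maps, and the finite-dimensional approximations must all be organized at many scales simultaneously, and the propagation estimates on $C^*_{\mathrm{tw}}(P_d(X))$ must be compatible with the $\ell^q$-geometry of $f(X)$ rather than with the $\ell^2$-geometry intrinsic to the Bott--Dirac operator---precisely why the Mayer--Vietoris cuts must be made in the $\ell^q$-metric. For the present theorem everything acts on Hilbert spaces, so the analytic bounds reduce to classical operator-theoretic estimates; the finer mixed $\ell^p$--$\ell^2$ estimates needed for general $p$ in Section \ref{sec:lplqCBC}, which rest on a vector-valued Marcinkiewicz--Zygmund inequality, do not intervene here.
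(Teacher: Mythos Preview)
Your proposal is correct and follows essentially the same route as the paper: twist by a Bott--Dirac element built from the Mazur map, show the composition through the twisted algebra recovers the identity (the paper does this via an Eilenberg swindle, Proposition~\ref{prop:composition}), and prove the twisted assembly map is an isomorphism by a Mayer--Vietoris argument carried out in the $\ell^q$-metric (Proposition~\ref{prop:cbcTwisted}). Two implementation differences are worth flagging. First, the paper immediately reduces to $q\geq 2$ by invoking Nowak's theorem that $\ell^q$-embeddability for $1\leq q\leq 2$ is equivalent to Hilbert-space embeddability; once $q\geq 2$, the homogeneous extension of the Mazur map is globally Lipschitz with a dimension-free constant (Lemma~\ref{lemma:lipschitzBoth}), so the H\"older obstacle you anticipate never materializes and the propagation estimates are straightforward. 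Second, rather than assembling an infinite-dimensional Bott--Dirac element over a directed system of subspaces, the paper first reduces (via \cite[\S12.5]{willett2020higher}) to the case of a coarse disjoint union $X=\bigsqcup_n X_n$ of finite sets, each embedding into a finite-dimensional $(E_n,\|\cdot\|_q)$; the Bott--Dirac operator then lives blockwise on honestly finite-dimensional Euclidean spaces, and the Mayer--Vietoris base case (Lemma~\ref{lemma:cbcBounded}) reduces to a single point rather than to a space of finite asymptotic dimension.
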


By \cite[Theorem 5]{Nowakcoarselp}, we know that the coarse embeddability into $\ell^q$ when $1\leq q \leq 2$ is equivalent to the coarse embeddability into the Hilbert space $\ell^2$. Therefore  we assume that $q\geq 2$ in the following discussion. 

\subsection{Geometric $C^*$-algebras and coarse Baum--Connes conjecture}\label{sec:preliminary}

In this subsection, we recall the definitions of various geometric $C^*$-algebras. We refer the  reader to \cite{willett2020higher} for more details.

\begin{definition}\label{def:Roe} 	Let $X$ be a proper metric space. An infinite dimensional separable Hilbert space $H$ is called a geometric $X$-module if $H$ is equipped with a non-degenerate representation of $C_0(X)$, and no non-zero element of $C_0(X)$ acts as a compact operator. Let $T$ be a bounded operator acting on $H$.
	\begin{enumerate}
		\item $T$ is called \emph{locally compact} if for any compact subset $K$ of $X$, $\chi_K T$ and $T\chi_K$ are compact operators, where $\chi_K$ is the characteristic function of $K$.
		\item The \emph{support} of $T$, denoted by $\supp(T)$, consists of all points $(y,x)\in X\times X$ such that for all open neighborhood $U$ of $x$ and $V$ of $y$, $\chi_VT\chi_U\ne 0$.
		\item The \emph{propagation} of $T$, denoted by $\prop(T)$, is defined by the supremum of $d(x,y)$ for any pair $(x,y)\in \supp(T)$.
		\item The collection of all locally compact operators with finite propagation over $H$ is denoted by $ C^*_{alg}(X)$. The completion of $ C^*_{alg}(X)$ with respect to the operator norm acting on $H$ is called the Roe algebra $ C^*(X)$.
	\end{enumerate}
\end{definition}

Let $X$ be a locally finite metric space. For each $r>0$, we define the Rips complex $P_r(X)$ to be the simplicial complex generated by $X$, such that $x_i$ and $x_j$ are in the same simplex if $d(x_i,x_j)\leq r$. By construction, $P_r(X)$ is finite dimensional. We equip $P_r(X)$ with the spherical metric: for each simplex
$$\left\{\sum_{k=1}^m t_kx_{i_k}:\sum_{k=1}^m t_k=1,~t_k\geq 0\right\},$$
its metric is the one obtained from the sphere $\sph^m$ through the following map:
$$\sum_{k=1}^m t_kx_{i_k}\mapsto\left(\frac{t_0}{\sqrt{\sum_{k=1}^m t_k^2}},\cdots,\frac{t_m}{\sqrt{\sum_{k=1}^m t_k^2}}\right).$$

It follows from \cite[Section 5.1]{willett2020higher} that the $K$-theory of Roe algebras is invariant under coarse equivalence. In particular, the $K$-theory of the Roe algebras of $X$ and its Rips complexes are canonically isomorphic.

Now we recall the definition of localization algebras for simplicial complexes.
\begin{definition}
	Let $P$ be a finite dimensional simplicial complex and $Z$ a countable dense subset of $P$. We take $\ell^2(Z)\otimes \ell^2(\mathbb N)$ as a geometric $P$=module. Let $C^*_L(P)$ be the norm completion of all maps $T\colon [0,\infty)\to  C^*_{alg}(Z)$ such that
	\begin{enumerate}
		\item for each $t\geq 0$, $T(t)$ has finite propagation,
		\item $\text{prop}(T(t))\to 0$ as $t\to\infty$, and
		\item $t\mapsto T(t)$ is uniformly continuous and uniformly bounded with respect to the operator norm,
	\end{enumerate}
	with respect to the sup-norm
	$$\|T\|\coloneqq \sup_{t\geq 0}\|T(t)\|.$$
\end{definition}

In general, the $K$-theory of $ C^*_L(P)$ is naturally isomorphic to the $K$-homology of $P$, which follows from a Mayer--Vietoris argument \cite{Yulocalization}.

Now we formulate the $C^*$-algebra version of coarse Baum--Connes conjecture. 
\begin{conjecture}[Coarse Baum--Connes Conjecture]\label{conj:cBC}
Let $X$ be a locally finite metric space with bounded geometry.  The coarse Baum--Connes conjecture for $X$ states that the evaluation map 
	$$\ev\colon C^*_L(P_r(X))\to  C^*(X)$$
	induces an isomorphism 
	$$\ev_*\colon \lim_{r\to\infty}K_*( C^*_L(P_r(X)))\to K_*( C^*(X)).$$
\end{conjecture}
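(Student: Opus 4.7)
The final statement is the formulation of the Coarse Baum--Connes Conjecture itself rather than an assertion with a known proof. As noted in the introduction, the conjecture actually \emph{fails} for certain bounded geometry spaces, including Willett's expanders with large girth, so no unconditional proof strategy can succeed for an arbitrary bounded geometry $X$. What I can propose is the general template that has driven every positive result in this area, including Theorem~\ref{thm:lqcBC} of the paper, together with an honest account of where it breaks for general $X$.

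The template is a \emph{Dirac--dual-Dirac} argument. One constructs, from some geometric input beyond bounded geometry (finite asymptotic dimension, a coarse embedding into a Hilbert or $\ell^q$-space, a proper affine isometric action, a controlled filtration, etc.), an auxiliary (typically Banach) algebra $A$ together with $K$-theory maps
\[
\alpha\colon \lim_{r\to\infty} K_*\bigl(C^*_L(P_r(X))\bigr)\longrightarrow K_*(A),\qquad
\beta\colon K_*\bigl(C^*(X)\bigr)\longrightarrow K_*(A),
\]
satisfying $\beta\circ \ev_*=\alpha$, and establishes that $\alpha$ is an isomorphism by a direct index calculation, usually based on a Bott--Dirac operator built from the geometric input. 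Injectivity of $\ev_*$ is then immediate, and surjectivity is obtained by lifting representatives of $K_*(C^*(X))$ through $\beta$ and deforming them back into $\lim_r K_*(C^*_L(P_r(X)))$ using the locality properties of the Rips filtration.

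Executing the template requires cutting $X$ into geometrically simple pieces and assembling the result via a Mayer--Vietoris argument compatible with both the localization algebra and the twisted algebra $A$; the cuts are made with respect to the geometric input, not the original metric on $X$. When the input is non-Hilbertian, twisting against a Bott--Dirac operator forces one to work on a function space carrying a mixed $\ell^p$--$\ell^2$ norm, and the analysis then relies on a vector-valued Marcinkiewicz--Zygmund-type inequality, itself a consequence of Grothendieck's inequality.

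The decisive obstacle---the reason this statement remains a conjecture rather than a theorem---is that no mechanism is known, and for some $X$ none can exist, for producing the auxiliary data $(A,\alpha,\beta)$ from bounded geometry alone. For a large-girth expander the map $\ev_*$ is genuinely not an isomorphism, so no $\alpha$ of the above form can be an isomorphism either; any honest proof plan therefore must narrow the admissible class of $X$. The main task, which this paper carries out, is to build $(A,\alpha,\beta)$ in the widest possible setting; the contribution of Theorem~\ref{thm:lqcBC} is to treat all $X$ coarsely embedding into $\ell^q$ for some $q\in[1,\infty)$, and the remaining sections execute this plan by extracting the relevant Bott--Dirac construction from the coarse embedding and the Mazur map.
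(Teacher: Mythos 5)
You correctly recognize that this is a conjecture, not a theorem — the paper offers no proof of it, and indeed cites counterexamples (large-girth expanders) showing it fails in general. Your summary of the Dirac--dual-Dirac/Bott--Dirac/Mayer--Vietoris template and the role of the Marcinkiewicz--Zygmund inequality accurately reflects the strategy the paper actually deploys for its conditional results (Theorems~\ref{thm:lqcBC} and~\ref{thm:lp-lqcBCIntro}).
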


The injectivity part of the coarse Baum--Connes conjecture is usually called the coarse strong Novikov conjecture.


\subsection{$\ell^q$-Bott--Dirac operator}\label{sec:BottDirac}
In this subsection, we study the Bott--Dirac operator constructed from the $\ell^q$-norm.

Let $E$ be a vector space of dimension $N$ equipped with a Euclidean $\ell^2$-norm.   Let $\Bigwedge^*E$ be the exterior algebra of $E$, equipped with its natural Euclidean metric. Consider the Clifford multiplication operators  on $\Bigwedge^*E$ given by
$$c(v)\omega=v\wedge \omega+v\lrcorner \omega $$
$$\overbar c(v)\omega=v\wedge \omega -v\lrcorner \omega$$
for $v\in E$ and $\omega \in \Bigwedge^\ast E$, where $\lrcorner$ is the standard interior multiplication.  Note that if $v$ is a unit vector, then $c(v)^2 = 1$, $\overbar c(v)^2 = -1$, and $c(v)\overbar c(v) + \overbar c(v) c(v) = 0$.

Suppose that $e_1,e_2,\cdots,e_N$ form an orthonormal basis of $(E, \|\cdot\|_2)$. We consider the $\ell^q$-norm $\|\cdot\|_q$ on $E$ given by
$$\left\|\sum_{i=1}^n x_ie_i\right\|_q=\left(\sum_{i=1}^n|x_i|^q\right)^{\frac 1 q}.$$ We denote by $S_q(v,R)$ the sphere in $(E, \|\cdot \|_q)$ of radius $R$ centered at $v$, and $B_q(v,R)$ the ball in $(E, \|\cdot \|_q)$ of radius $R$ centered at $v$. In the case where $v=0$, we shall omit $0$ and simply write $S_q(R)$ and $B_q(R)$ to denote the  $R$-sphere and $R$-ball in $(E, \|\cdot \|_q)$ centered at the origin. Recall the Mazur map $S_q(1)\to S_2(1)$ given by 
\begin{equation}\label{eq:Mazur}
	\psi\colon v=\sum_{i=1}^N v_ie_i\longmapsto \psi(v)=\sum_{i=1}^N v_i|v_i|^{q/2-1}e_i.
\end{equation}
The Mazur map is Lipschitz if $q\geq 2$, and $q/2$-H\"{o}lder continuous if $q\leq 2$. More precisely, we have
$$\|\psi(u)-\psi(v)\|_2\leq 2\left(\|u-v\|_q\right)^{q/2},~\text{ if }q\leq 2,$$
and
$$\|\psi(u)-\psi(v)\|_2\leq q2^{q-1}\|u-v\|_q,~\text{ if }q\geq 2.$$
See for example \cite[Theorem 12]{MR1188900}.
Now we define $\Psi\colon (E, \|\cdot\|_q) \to (E, \|\cdot\|_2)$ to be the homogeneous extension of $\psi$, namely
$$\Psi(v)=\|v\|_q\cdot \psi\left(v\|v\|_q^{-1}\right)$$
for all $0\neq v\in (E, \|\cdot\|_q)$ and $\Psi(0) =0$. 
\begin{lemma}\label{lemma:lipschitzBoth}
	Assume $q\geq 2$. With the above notation, we have $\|\Psi(v)\|_{2}=\|v\|_{q}$ for all $v\in (E, \|\cdot\|_q)$. Furthermore, $\Psi$ is Lipschitz, that is, 
	$$\|\Psi(v)-\Psi(u)\|_2\leq L\|v-u\|_q,$$
	for any $u, v\in (E, \|\cdot\|_q)$, where $L=1+q\cdot 2^{q}$.
\end{lemma}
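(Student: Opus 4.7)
The first statement is a direct unpacking of definitions. If $v \in S_q(1)$ with coordinates $v = \sum v_i e_i$, then $\|\psi(v)\|_2^2 = \sum_i \bigl(v_i |v_i|^{q/2-1}\bigr)^2 = \sum_i |v_i|^q = 1$, so $\psi$ really sends $S_q(1)$ into $S_2(1)$. For nonzero $v$ the homogeneous extension then gives $\|\Psi(v)\|_2 = \|v\|_q \cdot \|\psi(v/\|v\|_q)\|_2 = \|v\|_q$, and the case $v = 0$ is trivial. This also takes care of the Lipschitz estimate whenever one of $u,v$ is zero, since $\|\Psi(v) - \Psi(0)\|_2 = \|v\|_q = \|v - 0\|_q$.

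For the Lipschitz bound with $u, v$ both nonzero, my plan is the standard homogeneous-extension trick: write $\Psi(u) - \Psi(v)$ as a sum of a ``direction'' piece and a ``length'' piece by inserting the intermediate vector $\|u\|_q \psi(v/\|v\|_q)$. Then
\begin{equation*}
\Psi(u) - \Psi(v) = \|u\|_q\bigl(\psi(u/\|u\|_q) - \psi(v/\|v\|_q)\bigr) - (\|v\|_q - \|u\|_q)\,\psi(v/\|v\|_q).
\end{equation*}
Since $\psi(v/\|v\|_q) \in S_2(1)$ and $\bigl|\|v\|_q - \|u\|_q\bigr| \leq \|v-u\|_q$ by the reverse triangle inequality, the length piece has $\ell^2$-norm at most $\|u-v\|_q$.

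For the direction piece, apply the sphere-version Lipschitz bound $\|\psi(x) - \psi(y)\|_2 \leq q\, 2^{q-1} \|x-y\|_q$ on $S_q(1)$ (valid since $q \geq 2$) to $x = u/\|u\|_q$ and $y = v/\|v\|_q$. WLOG assume $\|u\|_q \leq \|v\|_q$. The key small computation is
\begin{equation*}
\Bigl\| \frac{u}{\|u\|_q} - \frac{v}{\|v\|_q}\Bigr\|_q \leq \frac{\|u - v\|_q}{\|u\|_q} + \Bigl|\frac{1}{\|u\|_q} - \frac{1}{\|v\|_q}\Bigr|\,\|v\|_q = \frac{\|u-v\|_q + \bigl|\|v\|_q - \|u\|_q\bigr|}{\|u\|_q} \leq \frac{2\,\|u-v\|_q}{\|u\|_q},
\end{equation*}
where I added and subtracted $v/\|u\|_q$. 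Multiplying by $\|u\|_q \cdot q\, 2^{q-1}$ bounds the direction piece by $q\, 2^q \|u-v\|_q$, and summing the two pieces gives the constant $L = 1 + q\cdot 2^q$.

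There is no real obstacle here; the only thing to keep track of is the asymmetry $\|u\|_q \leq \|v\|_q$ used in the denominator, which is justified by swapping roles if necessary, and the fact that the Lipschitz estimate for $\psi$ on $S_q(1)$ cited from \cite{MR1188900} is valid precisely in the range $q \geq 2$ that is assumed.
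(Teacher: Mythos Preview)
Your proof is correct and essentially the same as the paper's. Both insert the identical intermediate point $\|u\|_q\,\psi(v/\|v\|_q)$ (the paper writes it as $\Psi(v_s)$ with $v_s = \|u\|_q\, v/\|v\|_q$), bound the radial piece by $\bigl|\|v\|_q-\|u\|_q\bigr|\leq\|u-v\|_q$, and bound the angular piece via the Mazur-map Lipschitz estimate together with $\|u/\|u\|_q - v/\|v\|_q\|_q \leq 2\|u-v\|_q/\|u\|_q$; the only difference is cosmetic (the paper reaches this last inequality via $\|v_s - u\|_q\leq 2\delta$ rather than by adding and subtracting $v/\|u\|_q$), and your WLOG assumption $\|u\|_q\leq\|v\|_q$ is in fact unnecessary since your displayed computation already holds without it.
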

\begin{proof}
	Clearly, we have 
	$\|\Psi(v)\|_2=\|v\|_q$. 
	Let us now show that 
	$$\|\Psi(v)-\Psi(u)\|_2\leq L\|v-u\|_q.$$
	Without loss of generality, we assume that $u$ and $v$ are both non-zero.
	For any $r>0$, we denote $v_r=r\|v\|_q^{-1}\cdot v$, namely the vector that has length $r$ and the same direction as $v$. Write $\|u\|_q=s$ and $\|v-u\|_q=\delta$. We note that
	$$\|\Psi(v)-\Psi(u)\|_2\leq \|\Psi(v)-\Psi(v_s)\|_2+\|\Psi(v_s)-\Psi(u)\|_2.$$
We estimate the two terms on the right hand side separately. For the first term, we have 
	\begin{align*}
		\|\Psi(v)-\Psi(v_s)\|_2=&\|(\|v\|_q-s)\psi(v_1)\|_2=|\|v\|_q-s|\\
		=&|\|v\|_q-\|u\|_q|\leq \|v-u\|_q=\delta.
	\end{align*}
For the second term, we first note that $\|v_s\|_q=\|u\|_q=s$ and
$$\|v_s-v\|_q=\|(s-\|v\|_q)\cdot v_1\|_q=|\|u\|_q-\|v\|_q|\leq \delta, $$
which implies that 
$$\|v_s-u\|_q\leq\|v_s-v\|_q+\|v-u\|_q\leq 
2\delta.$$
It follows that 
\begin{align*}
	\|\Psi(v_s)-\Psi(u)\|_2&= s\|\psi(s^{-1} v_s)-\psi(s^{-1}u)\|_2\leq s\cdot q2^{q-1}\|s^{-1}v_s-s^{-1}u \|_q\\
	 &\leq s\cdot q2^{q-1}\cdot \frac{2\delta}{s}=q2^{q}\cdot\delta, 
\end{align*}
since $q\geq 2$. 
 To summarize, we have
$$\|\Psi(v)-\Psi(u)\|_2\leq \delta+q2^{q}\cdot\delta=L\delta=L\|v-u\|_q.$$
This finishes the proof.
\end{proof}

 We emphasize that the constant $L$ in Lemma \ref{lemma:lipschitzBoth} is independent of the dimension of $E$.

 For any $v\in E$, let $C_v$ be  the Clifford multiplication operator given   by
$$C_v(w)= c(\Psi(w-v))$$
for all $w\in E$.
Let 
 \[  D=d+d^* = \sum_{i=1}^{N} \overbar c(e_i) \frac{\partial}{\partial x_i}  \] 
 be the de Rham operator, 
 where $e_1, \dots, e_N$ are the basis vectors of $(E, \|\cdot\|_2)$ fixed as before and $x_1, \dots, x_N$ are the corresponding coordinates of $E$.

For any $s>0$, we define the following analogue of the  Bott--Dirac operator: 
\begin{equation}
	B_{s,v}=s^{-1}D+C_v
\end{equation}
acting as an unbounded operator on the Hilbert space $L^2(E,\Bigwedge^*E)$. Here $L^2(E,\Bigwedge^*E)$ the Hilbert space of $L^2$-integrable functions from  $(E, \|\cdot\|_2)$ to $(\Bigwedge^*E, \|\cdot\|_2)$.   
\begin{lemma}\label{lemma:Fredholm}
	If $q\geq 2$, then for any $s\geq 1$, $B_{s,v}$ is a self-adjoint Fredholm operator and has odd grading with respect to the even-odd grading on $\Bigwedge^*E$. Moreover, its Fredholm index is $1$. 
\end{lemma}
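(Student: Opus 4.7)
The proof naturally splits into three parts. First I would record the formal properties: on $C_c^\infty(E,\Bigwedge^*E)$, $D = d + d^*$ is symmetric, and $C_v$ is a symmetric multiplication operator since $c(\cdot)$ of a real vector is self-adjoint. Both $D$ and each $c(e_j)$ change form parity, so $B_{s,v}$ is odd-graded. Essential self-adjointness will follow from the coercivity bound on $B_{s,v}^2$ established below.

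Second, for the Fredholm property I would expand
\[
B_{s,v}^2 = s^{-2}D^2 + s^{-1}(DC_v + C_v D) + C_v^2.
\]
Using $c(\cdot)^2 = \|\cdot\|_2^2$ together with Lemma \ref{lemma:lipschitzBoth}, we have $C_v^2 = \|\Psi(w-v)\|_2^2 = \|w-v\|_q^2$. Writing $C_v = \sum_j \psi_j(w-v)\, c(e_j)$, the Leibniz rule breaks the cross term into a first-order piece $\sum_{i,j}\psi_j(w-v)\bigl(\overbar c(e_i)c(e_j)+c(e_j)\overbar c(e_i)\bigr)\partial_i$, which vanishes because the anticommutator $c(e_j)\overbar c(e_i) + \overbar c(e_i)c(e_j)$ is identically zero, and a zeroth-order piece $\sum_{i,j}(\partial_i\psi_j)(w-v)\,\overbar c(e_i)c(e_j)$, which is uniformly bounded since $\Psi$ is $L$-Lipschitz and hence $|\partial_i\psi_j|\le L$ almost everywhere. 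Therefore, for $s \geq 1$,
\[
B_{s,v}^2 \geq s^{-2}D^2 + \|w-v\|_q^2 - C_1
\]
for some constant $C_1 > 0$ independent of $w$. Since $\|w-v\|_q^2$ is coercive on the finite-dimensional space $E$ and $D^2 \geq 0$, the operator $s^{-2}D^2 + \|w-v\|_q^2$ is essentially a direct sum of Schr\"odinger operators with confining potential and thus has compact resolvent. A standard Kato--Rellich perturbation argument then shows $(1+B_{s,v}^2)^{-1}$ is compact, yielding both essential self-adjointness of $B_{s,v}$ and its Fredholm property.

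Third, for the index I would construct a homotopy to the classical Bott--Dirac operator. Set $\Psi_t(u) = (1-t)u + t\Psi(u)$ and $B_{s,v}^{(t)} = s^{-1}D + c(\Psi_t(w-v))$. Using the explicit formula $\psi_j(u) = u_j|u_j|^{q/2-1}\|u\|_q^{1-q/2}$, componentwise inspection shows that for $q \geq 2$ each $(\Psi_t(u))_j$ preserves the sign of $u_j$ and, evaluating at an index $j^\ast$ that maximizes $|u_j|$, one has $|(\Psi_t(u))_{j^\ast}| \geq N^{1/q - 1/2}|u_{j^\ast}|$ uniformly in $t \in [0,1]$. Combined with $|u_{j^\ast}| \geq N^{-1/2}\|u\|_2$, this gives a uniform coercivity $\|\Psi_t(u)\|_2 \geq N^{1/q - 1}\|u\|_2$. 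The argument of the previous step then applies uniformly to each $B_{s,v}^{(t)}$, and $t \mapsto (B_{s,v}^{(t)}+i)^{-1}$ is norm-continuous, so homotopy invariance yields $\ind(B_{s,v}) = \ind(B_{s,v}^{(0)})$. At $t=0$, $B_{s,v}^{(0)} = s^{-1}D + c(w-v)$ is the classical Bott--Dirac operator; a direct computation of $(B_{s,v}^{(0)})^2$ on each form-degree component shows $\ker B_{s,v}^{(0)}$ is one-dimensional, spanned by the Gaussian $e^{-s\|w-v\|_2^2/2} \in \Bigwedge^0 E$, so the index equals $1$.

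The main obstacle I expect is the homotopy step: ensuring Fredholmness of each $B_{s,v}^{(t)}$ uniformly in $t$. The Lipschitz control on $\Psi_t$ follows directly from Lemma \ref{lemma:lipschitzBoth}, but the uniform coercivity requires the componentwise analysis above and crucially uses $q \geq 2$.
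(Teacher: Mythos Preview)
Your argument is correct and follows the same overall architecture as the paper's proof: expand $B_{s,v}^2$, identify the coercive potential $C_v^2=\|w-v\|_q^2$ and the bounded zeroth-order cross term, deduce compact resolvent, and then run the linear homotopy $\Psi_t=(1-t)\mathrm{id}+t\Psi$ to the classical Bott--Dirac operator to read off the index. The technical implementations differ in two places. For compactness of the resolvent, the paper does not invoke the Schr\"odinger-with-confining-potential model; instead it takes a Lipschitz cutoff $\rho_R$ supported outside $B_q(R)$, shows $\|\rho_R(1+B_s^2)^{-1/2}\|\to 0$ as $R\to\infty$, and concludes via Rellich that $(1+B_s^2)^{-1/2}$ is a norm limit of compacts. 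That route has the side benefit of producing the explicit $R$-dependent decay estimates that are reused in Lemmas~\ref{lemma:cut}--\ref{lemma:difference} and Corollaries~\ref{coro:cut}--\ref{coro:differenceCut}, whereas your perturbation argument gives Fredholmness cleanly but not those quantitative bounds. Conversely, for the homotopy step the paper simply asserts that ``the same argument above'' yields Fredholmness of $s^{-1}D+C^t$ for all $t$; your componentwise sign analysis and the resulting uniform lower bound $\|\Psi_t(u)\|_2\ge N^{1/q-1}\|u\|_2$ actually supply the coercivity of $(C^t)^2$ that makes this assertion work, so you are filling in a detail the paper leaves implicit. One small wording quibble: the passage from the model operator to $B_{s,v}^2$ is a bounded-perturbation/resolvent-identity argument rather than Kato--Rellich proper, but the content is right.
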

\begin{proof}
	We may assume  without loss of generality that $v=0$ and omit the subscript $v$ from now on. Note that
	\begin{align*}
		B_{s}^2=s^{-2}D^2+s^{-1}[D,C]+C^2,
	\end{align*}
where 
$$C(x)^2=\|\Psi(x-0)\|_2^2=\|x\|_q^2.$$
Let us write $\Psi(x)=(\Psi_1(x),\ldots,\Psi_N(x))$, and
$C(x)=\sum_{i=1}^N \Psi_i(x) c(e_i)$. We have
$$[D,C]=\sum_{i,j=1}^N\frac{\partial \Psi_i(x)}{\partial x_j}\overbar c(e_j)c(e_i) .$$
Since $\Psi$ is $L$-Lipschitz, we have $\Psi_i=\langle\Psi,e_i\rangle$ is $L$-Lipschitz, hence
$$\|[D,C]\|\leq N^2L.$$
Let $\rho_R$ be any $1$-Lipschitz function on $E$ that is supported outside $B_q(R)$. For any $\varphi\in C_c^\infty(E,\Bigwedge^*E)$, we have
$$\|B_s(\rho_R\varphi)\|^2=\langle B_s^2(\rho_R\varphi),\rho_R\varphi\rangle\geq\frac 1 4\|R^2\rho_R\varphi\|^2$$
if $R$ is large enough. Set
$$\sigma=\left(1+B_s^2\right)^{1/2}\varphi.$$	
It follows that
\begin{align*}
	&\|\rho_R (1+B_s^2)^{-1/2}\sigma\|\leq \frac{2}{R}\|B_s\rho_R(1+B_s^2)^{-1/2}\sigma\|\\
	\leq&\frac 2 R\left(\|[B_s,\rho_R](1+B_s^2)^{-1/2}\|+\|\rho_RB_s(1+B_s^2)^{-1/2}\|\right)\|\sigma\|\leq \frac{2}{R}(1+s^{-1})\|\sigma\|.
\end{align*}
Here we have used the fact that
$$\|[B_s,\rho_R]\|=\|s^{-1}[D,\rho_R]\|=s^{-1}\|c(d\rho_R)\|\leq s^{-1}.$$
As a result, for any fixed $s>0$, we have
$$\left\|(1+B_s^2)^{-1/2}-(1-\rho_R)(1+B_s^2)^{-1/2}\right\|\to 0$$
as $R\to\infty$.  Note that by the Rellich lemma, $(1-\rho_R)(1+B_s^2)^{-1/2}$ is a compact operator. Therefore 
$\left(1+B_{s}^2\right)^{-1/2}$ is compact. It follows that $B_s$ is Fredholm

 To compute the index of $B_s$, we consider the standard Bott potential given by 
\[   C^{\beta}(x) = \sum_{i=1}^N x_i c(e_i).\] 
We connect the potential $C$ to $C^\beta$ via a linear path which we denote by $C^t, t\in [0, 1]$. The same argument above shows that the operators $ s^{-1}D + C^t$ are Fredholm. In particular, it follows that the Fredholm index of $B_s$ coincides with the Fredholm index of $s^{-1}D+ C^\beta$, the latter of which  is $1$ by a standard computation.
\end{proof}

\begin{definition}
	For any $s\geq 1$ and $v\in E$, given $B_{s,v}=s^{-1}D+C_v$, we define
	$$F_{s,v}=B_{s,v}\left(1+B_{s,v}^2\right)^{-1/2}.$$
\end{definition}


Let us first fix some notations.
\begin{definition}
	Let $F$ be a bounded operator on $L^2(E,\Bigwedge^*E)$. 
	\begin{enumerate}
		\item The \emph{$E$-support} of $F$ is defined to be the smallest closed set in $E$, denoted by $\supp_E(F)$, such that if $A$ is an open set away from $\supp_E(F)$, then $\chi_AF=F\chi_A=0$, where $\chi_A$ is the characteristic function of $A$.
		\item The $\ell^q$-propagation of $F$ is defined to be the smallest number, denoted by $\prop_{q}(F)$, such that if $A,B$ are two open sets in $E$ that are $d$-away in $\ell^q$-distance with $d>\prop_{q}(F)$, then $\chi_AF\chi_B=0$.
	\end{enumerate}
\end{definition}

For example, the $E$-support of a integral kernel operator
$$\varphi\mapsto\int_E K(x,y)\varphi(y)dy$$
is equal to the union of $\pi_1\supp(K)$ and $\pi_2\supp(K)$, where $\pi_i$ is the projection from $E\times E$ to its $i$th component.

 The following lemma summarizes some useful consequences  of the fact that  $\|\cdot\|_q\leq\|\cdot\|_2$ for $q\geq 2$.
\begin{lemma}\label{lemma:q-norm<=2-norm}
Assume that $q\geq 2$. Then the following hold. 
	\begin{enumerate}
		\item Any $\ell^q$-$L$-Lipschitz function on $E$ is also $\ell^2$-$L$-Lipschitz. 
		\item For any bounded operator on $L^2(E,\Bigwedge^*E)$, its $\ell^q$-propagation is no more than its $\ell^2$-propagation.
	\end{enumerate}
\end{lemma}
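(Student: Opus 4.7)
The plan is to deduce both parts directly from the pointwise norm comparison $\|v\|_q \le \|v\|_2$ for all $v\in E$ whenever $q\ge 2$, which the lemma preamble already highlights as the underlying fact. This inequality is the standard monotonicity of $\ell^p$-norms in $p$ on finite sequences, so I would quote it (or observe it in one line via the power mean inequality applied to the coordinates) and then feed it into both assertions.

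For part (1), the argument is immediate: if $f$ is $\ell^q$-$L$-Lipschitz, then for any $x,y \in E$,
\[ |f(x)-f(y)| \le L\,\|x-y\|_q \le L\,\|x-y\|_2, \]
so $f$ is also $\ell^2$-$L$-Lipschitz with the same constant.

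For part (2), I would unwind the definition of propagation. Let $r = \prop_2(F)$ and take any two open sets $A,B \subset E$ whose $\ell^q$-distance $d_q(A,B)$ exceeds $r$. Because $\|x-y\|_q \le \|x-y\|_2$ for every pair $x \in A$, $y \in B$, one has
\[ d_2(A,B) \;=\; \inf_{x\in A,\,y\in B}\|x-y\|_2 \;\ge\; \inf_{x\in A,\,y\in B}\|x-y\|_q \;=\; d_q(A,B) \;>\; r. \]
By definition of $\ell^2$-propagation this forces $\chi_A F \chi_B = 0$, and hence $\prop_q(F) \le \prop_2(F)$.

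Neither step is really an obstacle; the whole content is packaging the pointwise inequality $\|\cdot\|_q \le \|\cdot\|_2$ into the two definitions. The only mild subtlety is being careful with the direction of the inequality when passing from norms of differences to distances between sets (the infimum over a smaller family of distances is still bounded below by the infimum over the same pairs measured in the smaller norm), which I would address explicitly in the displayed computation above.
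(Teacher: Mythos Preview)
Your proposal is correct and follows essentially the same approach as the paper: both derive everything from the pointwise inequality $\|v\|_q \le \|v\|_2$ for $q\ge 2$, deduce part (1) by chaining the Lipschitz estimate with this inequality, and deduce part (2) by observing that sets which are far apart in $\ell^q$ are at least as far apart in $\ell^2$. The only cosmetic difference is that the paper explicitly writes out the one-line computation $\sum |v_i|^q = \sum (|v_i|^2)^{q/2} \le (\sum |v_i|^2)^{q/2}$ justifying the norm comparison, whereas you cite it as the standard monotonicity of $\ell^p$-norms.
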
 
\begin{proof}
	For the convenience of the reader, we sketch a proof. Let $\{e_i\}$ be the basis chosen above. Suppose that $v=\sum_{i=1}^N v_ie_i$. We have 
	$$(\|v\|_q)^q=\sum_{i=1}^N |v_i|^q=\sum_{i=1}^N (|v_i|^2)^{q/2}\leq \left(\sum_{i=1}^N |v_i|^2\right)^{q/2}=(\|v\|_2)^q.$$

	To prove (1), assume that $h$ is an $\ell^q$-$L$-Lipschitz function. It follows that
	$$|h(v)-h(u)|\leq L\|v-u\|_q\leq L\|v-u\|_2,$$
	which shows that $h$ is $\ell^2$-$L$-Lipschitz.
	
	To prove (2), let $\prop_2(F)$ be the $\ell^2$-propagation of $F$. Consider open sets $A,B$ that are $d$-away in $\ell^q$-norm with $d>\prop_2(F)$, namely $\|x-y\|_q\geq d$ for any $x\in A$ and $y\in B$. It follows that $\|x-y\|_2\geq \|x-y\|_q\geq d$, namely $A$ and $B$ are also $d$-away in $\ell^2$-norm with $d>\prop_2(F)$. Therefore we have $\chi_AF\chi_B=0$. By definition, we conclude that $\prop_q(F)\leq\prop_2(F)$.
\end{proof}

Now let us prove  some key properties of the operator $F_{s,v}$.
\begin{lemma}\label{lemma:cut}
	Suppose that $q\geq 2$. Let $\chi_{v,R}$ be the characteristic function of $B_q(v,R)$. If $s\geq 2N^2L$, then 
	$$\| (1-\chi_{v,R})\left(1+\lambda^2+B_{s,v}^2\right)^{-1/2}\|\leq\frac{1}{\sqrt{1/2+\lambda^2+R^2}},$$
	for any $\lambda>0$.
\end{lemma}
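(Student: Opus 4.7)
The plan is to convert the claimed operator-norm bound into a quadratic-form inequality on the form domain of $B_{s,v}$ and then apply operator monotonicity of $x\mapsto 1/x$. The hypothesis $s\ge 2N^2L$ is precisely what makes the ``commutator'' cross term absorbable.

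First, I would expand $B_{s,v}^2$ using the same computation as in the proof of Lemma~\ref{lemma:Fredholm}:
\[
B_{s,v}^2 \;=\; s^{-2} D^2 + s^{-1}[D,C_v] + C_v^2,
\]
where $C_v^2$ is multiplication by $\|\cdot - v\|_q^2$ and $\|[D,C_v]\|\le N^2L$. Since $s^{-2}D^2\ge 0$ and $s\ge 2N^2L$ forces $\|s^{-1}[D,C_v]\|\le 1/2$, this yields the form inequality
\[
1+\lambda^2+B_{s,v}^2 \;\ge\; \tfrac12+\lambda^2+\|\cdot-v\|_q^2
\]
on the form domain of $B_{s,v}$. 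Next, a trivial pointwise check gives $\|x-v\|_q^2 \ge R^2(1-\chi_{v,R}(x))$: outside $B_q(v,R)$ the left side is $\ge R^2$ while the right side equals $R^2$, and inside $B_q(v,R)$ the right side vanishes. Writing $P=1-\chi_{v,R}$, this strengthens the inequality to
\[
1+\lambda^2+B_{s,v}^2 \;\ge\; \tfrac12+\lambda^2+R^2 P.
\]

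Now I would invert. Since $P$ and $\chi_{v,R}$ are complementary orthogonal projections, the bounded positive operator on the right decomposes as $(\tfrac12+\lambda^2)\chi_{v,R}+(\tfrac12+\lambda^2+R^2)P$. Operator monotonicity of $x\mapsto 1/x$ for positive operators (applied in the standard form-inequality sense, valid even for the unbounded $B_{s,v}^2$ via the variational identity $\langle L^{-1}\psi,\psi\rangle=\sup_\phi|\langle\psi,\phi\rangle|^2/\langle L\phi,\phi\rangle$) gives
\[
(1+\lambda^2+B_{s,v}^2)^{-1} \;\le\; \tfrac{1}{\tfrac12+\lambda^2}\chi_{v,R}+\tfrac{1}{\tfrac12+\lambda^2+R^2}\,P.
\]
Conjugating by $P$ and using $P^2=P$ together with $P\chi_{v,R}=0$ yields
\[
P(1+\lambda^2+B_{s,v}^2)^{-1}P \;\le\; \tfrac{1}{\tfrac12+\lambda^2+R^2}\,P.
\]

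Finally, with $A=(1+\lambda^2+B_{s,v}^2)^{-1/2}$ self-adjoint, the $C^*$-identity $\|PA\|^2=\|(PA)(PA)^*\|=\|PA^2P\|$ delivers the claimed bound $\|PA\|\le 1/\sqrt{\tfrac12+\lambda^2+R^2}$. The only step that is not pure bookkeeping is the passage to inverses, but this is routine once the preceding inequality is interpreted as a form inequality on $\mathrm{dom}(B_{s,v})$; everything else is a pointwise comparison and the spectral calculus of two commuting projections. I do not anticipate any genuine obstacle.
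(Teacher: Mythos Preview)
Your proposal is correct and follows essentially the same approach as the paper: both establish the form inequality $1+\lambda^2+B_{s,v}^2\ge \tfrac12+\lambda^2+\|\cdot-v\|_q^2$ from the cross-term bound $\|s^{-1}[D,C_v]\|\le 1/2$, and then use that outside $B_q(v,R)$ the potential dominates $R^2$. The only cosmetic difference is in the final step: the paper substitutes $\sigma=(1+\lambda^2+B_s^2)^{1/2}\varphi$ and manipulates the resulting integral directly, whereas you invoke operator monotonicity of $x\mapsto 1/x$ together with the $C^*$-identity $\|PA\|^2=\|PA^2P\|$; these are two phrasings of the same computation.
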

\begin{proof}
	We may  assume without loss of generality that $v=0$, thus omit the base point $v$ from the notation. 
	By the proof of Lemma \ref{lemma:Fredholm}, we see that if $s\geq 2N^2L$, then
	$$\langle\left(1+\lambda^2+B_s^2\right)\varphi,\varphi\rangle\geq \langle\left(1/2+\lambda^2+\|x\|_q^2\right)\varphi,\varphi\rangle$$
	for any $\varphi\in C_c^\infty(E_q,\Bigwedge^*E)$. Set
	$$\sigma=\left(1+\lambda^2+B_s^2\right)^{1/2}\varphi.$$	
	It follows that
	$$\|\sigma\|^2\geq\left\|\sqrt{1/2+\lambda^2+\|x\|_q^2}\left(1+\lambda^2+B_s^2\right)^{-1/2}\sigma\right\|^2.$$
	Thus we have
	\begin{align*}
		\|\sigma\|^2\geq &\int_E(1/2+\lambda^2+\|x\|_q^2)\cdot\left\|\left(1+\lambda^2+B_s^2\right)^{-1/2}\sigma\right\|^2dx\\
		\geq&\int_E(1/2+\lambda^2+\|x\|_q^2)(1-\chi_{R})\cdot \left\|\left(1+\lambda^2+B_s^2\right)^{-1/2}\sigma\right\|^2dx\\
		\geq&\int_E(1/2+\lambda^2+R^2)\cdot\left\|(1-\chi_{R})\left(1+\lambda^2+B_s^2\right)^{-1/2}\sigma\right\|^2dx\\
		=&(1/2+\lambda^2+R^2)\left\|(1-\chi_{R})\left(1+\lambda^2+B_s^2\right)^{-1/2}\sigma\right\|^2.
	\end{align*}
	It follows that 
$$\| (1-\chi_{v,R})\left(1+\lambda^2+B_{s,v}^2\right)^{-1/2}\|\leq\frac{1}{\sqrt{1/2+\lambda^2+R^2}},$$
as $\left(1+\lambda^2+B_s^2\right)^{1/2}$ is surjective. 
	This finishes the proof.
\end{proof}

As $F_{s,v}^2-1=(1+B_{s,v}^2)^{-1}$, we have the following immediate corollary.
\begin{corollary}\label{coro:cut}
	Suppose that $q\geq 2$. Let $\chi_{v,R}$ be the characteristic function of $B_q(v,R)$. If $s\geq 2N^2L$, then 
	$$\| (1-\chi_{v,R})\left(F_{s,v}^2-1\right)\|\leq\frac{1}{\sqrt{1/2+R^2}}.$$
\end{corollary}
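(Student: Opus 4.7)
The corollary is essentially an immediate consequence of Lemma \ref{lemma:cut} with $\lambda=0$, combined with the functional-calculus identity relating $F_{s,v}$ to $B_{s,v}$.

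The plan is as follows. First, I would record the elementary identity
\[ F_{s,v}^2 - 1 = B_{s,v}^2(1+B_{s,v}^2)^{-1} - 1 = -(1+B_{s,v}^2)^{-1}, \]
which reduces the task to bounding $\|(1-\chi_{v,R})(1+B_{s,v}^2)^{-1}\|$. Next, I would split
\[ (1+B_{s,v}^2)^{-1} = (1+B_{s,v}^2)^{-1/2} \cdot (1+B_{s,v}^2)^{-1/2}, \]
so that
\[ (1-\chi_{v,R})(F_{s,v}^2-1) = -\bigl[(1-\chi_{v,R})(1+B_{s,v}^2)^{-1/2}\bigr]\,(1+B_{s,v}^2)^{-1/2}. \]
Applying Lemma \ref{lemma:cut} with $\lambda=0$ to the first bracket gives the bound $1/\sqrt{1/2+R^2}$, while the second factor satisfies $\|(1+B_{s,v}^2)^{-1/2}\|\leq 1$ by functional calculus (since $B_{s,v}$ is self-adjoint and $1+B_{s,v}^2\geq 1$). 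Multiplying these bounds yields the claimed estimate
\[ \|(1-\chi_{v,R})(F_{s,v}^2 - 1)\| \leq \frac{1}{\sqrt{1/2+R^2}}. \]

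There is essentially no obstacle here; the only thing to verify is that the hypothesis $s\geq 2N^2L$ of Lemma \ref{lemma:cut} is retained, which it is by assumption, and that the absence of the $\lambda^2$ term in the denominator is consistent with taking $\lambda=0$ in the lemma.
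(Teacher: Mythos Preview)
Your proof is correct and is exactly the argument the paper has in mind: the paper states the corollary as an immediate consequence of the identity $F_{s,v}^2-1=-(1+B_{s,v}^2)^{-1}$ together with Lemma \ref{lemma:cut} at $\lambda=0$, and your factorization $(1+B_{s,v}^2)^{-1}=(1+B_{s,v}^2)^{-1/2}(1+B_{s,v}^2)^{-1/2}$ with $\|(1+B_{s,v}^2)^{-1/2}\|\leq 1$ is precisely how one makes that ``immediate'' step explicit.
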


\begin{lemma}\label{lemma:difference}
	Suppose that $q\geq 2$. Then $F_{s,v}-F_{s,u}$ is compact, and
	$$\|F_{s,v}-F_{s,u}\|\leq\frac{5L}{4}\|v-u\|_q.$$
\end{lemma}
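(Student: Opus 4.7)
The plan is to interpolate linearly and use a resolvent integral. Set $\Delta \coloneqq B_{s,v} - B_{s,u} = C_v - C_u$; since $\Delta$ acts as Clifford multiplication by $\Psi(x-v) - \Psi(x-u)$, Lemma~\ref{lemma:lipschitzBoth} gives $\|\Delta\| \leq L\|v-u\|_q$. Using the functional-calculus identity $\frac{x}{\sqrt{1+x^2}} = \frac{2}{\pi}\int_0^\infty \frac{x\, d\xi}{1+\xi^2+x^2}$, I would write
\[ F_{s,v} - F_{s,u} = \frac{2}{\pi}\int_0^1\!\int_0^\infty \frac{d}{dt}\bigl[B(t)\, A_t(\xi)\bigr]\, d\xi\, dt, \]
where $B(t) \coloneqq B_{s,u} + t\Delta$ and $A_t(\xi) \coloneqq (1+\xi^2+B(t)^2)^{-1}$ for $t \in [0,1]$.

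Differentiating in $t$, using $\frac{d}{dt} A_t(\xi) = -A_t(\xi)\bigl(B(t)\Delta + \Delta B(t)\bigr) A_t(\xi)$ together with the cancellation $1 - B(t)^2 A_t(\xi) = (1+\xi^2) A_t(\xi)$ (valid because $B(t)$ commutes with $A_t(\xi)$), the integrand simplifies to
\[ \frac{d}{dt}\bigl[B(t)\, A_t(\xi)\bigr] = (1+\xi^2)\, A_t(\xi)\,\Delta\, A_t(\xi) \;-\; B(t)\, A_t(\xi)\,\Delta\, B(t)\, A_t(\xi). \]
The spectral estimates $\|A_t(\xi)\| \leq (1+\xi^2)^{-1}$ and $\|B(t)\, A_t(\xi)\| \leq (2\sqrt{1+\xi^2})^{-1}$ then bound its norm by $\frac{\|\Delta\|}{1+\xi^2} + \frac{\|\Delta\|}{4(1+\xi^2)} = \frac{5\|\Delta\|}{4(1+\xi^2)}$; integrating against $\frac{2}{\pi}\int_0^\infty (1+\xi^2)^{-1}\, d\xi = 1$ produces the claimed bound $\|F_{s,v} - F_{s,u}\| \leq \frac{5L}{4}\|v-u\|_q$.

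For compactness, note that $B(t) = s^{-1} D + C(t)$, where $C(t)$ is Clifford multiplication by the $L$-Lipschitz map $(1-t)\Psi(\,\cdot\,-u) + t\Psi(\,\cdot\,-v)$, whose Euclidean norm is still proper on $E$. Hence the argument of Lemma~\ref{lemma:Fredholm} adapts verbatim to show that $A_t(\xi)$ is compact for every $(t,\xi)$. Each term of the simplified integrand is then a composition of a compact operator with bounded ones, and the absolute norm convergence of the double integral exhibits $F_{s,v} - F_{s,u}$ as a norm-limit of finite sums of compact operators.

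The main obstacle is recognizing the algebraic cancellation that produces the extra factor $(1+\xi^2)$ in front of $A_t(\xi)\Delta A_t(\xi)$. Without it, the naive expansion leaves a term containing $B(t)^2 A_t(\xi) = 1 - (1+\xi^2)A_t(\xi)$, which is \emph{not} compact and forces a worse constant; exploiting the commutation of $B(t)$ with $A_t(\xi)$ is what simultaneously delivers compactness of the integrand and the sharp constant $\frac{5}{4}$.
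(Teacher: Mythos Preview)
Your proof is correct and uses essentially the same ingredients as the paper: the resolvent integral for $x(1+x^2)^{-1/2}$, the algebraic cancellation that produces the factor $(1+\xi^2)$, and the spectral bounds $\|A_t(\xi)\|\le(1+\xi^2)^{-1}$, $\|B(t)A_t(\xi)\|\le(2\sqrt{1+\xi^2})^{-1}$. The one real difference is that the paper does not interpolate; it writes the endpoint difference directly via the identity
\[
B_{s,v}(1+\lambda^2+B_{s,v}^2)^{-1}-B_{s,u}(1+\lambda^2+B_{s,u}^2)^{-1}
=(1+\lambda^2+B_{s,v}^2)^{-1}\bigl[(1+\lambda^2)\,c_{v,u}-B_{s,v}\,c_{v,u}\,B_{s,u}\bigr](1+\lambda^2+B_{s,u}^2)^{-1},
\]
where $c_{v,u}=B_{s,v}-B_{s,u}$, and then applies the same two norm bounds. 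This yields the identical constant $5L/4$ and a slightly cleaner compactness argument: the integrand is sandwiched between the resolvents of $B_{s,v}$ and $B_{s,u}$, whose compactness is already established in Lemma~\ref{lemma:Fredholm}, so no properties of intermediate operators $B(t)$ (self-adjointness, properness of the interpolated potential, compact resolvent) need to be checked. Your verification of those properties is correct, but the paper's direct identity makes it unnecessary.
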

\begin{proof}
	 Note that
	$$F_{s,v}=\frac 2\pi\int_0^\infty B_{s,v}\left(1+\lambda^2+B^2_{s,v}\right)^{-1}d\lambda,$$
	where the integral converges in the strong operator topology.
	
	Set
	$$ c_{v,u}(w)=B_{s,v}(w)-B_{s,u}(w)= c(\Psi(w -v)-\Psi(w-u))$$
	for all $w\in E$. Note that
	\begin{align*}
		&B_{s,v}\left(1+\lambda^2+B_{s,v}^2\right)^{-1}-B_{s,u}\left(1+\lambda^2+B_{s,u}^2\right)^{-1}\\
		=&\left(1+\lambda^2+B_{s,v}^2\right)^{-1}\left( (1+\lambda^2) c_{v,u}
		-B_{s,v} c_{v,u}B_{s,u} \right)
		\left(1+\lambda^2+B_{s,u}^2\right)^{-1}.
	\end{align*}
Since $\Psi$ is $L$-Lipschitz, we have
$$\|c_{v,u}(w)\|\leq\sup_{w\in E}\|\Psi(w -v)-\Psi(w-u)\|_2\leq L\|v-u\|_q.$$
Therefore, we obtain that
$$\|\left(1+\lambda^2+B_{s,v}^2\right)^{-1}(1+\lambda^2) c_{v,u}
\left(1+\lambda^2+B_{s,u}^2\right)^{-1}\|\leq \frac{L\|v-u\|_q}{1+\lambda^2},$$
and
$$\|\left(1+\lambda^2+B_{s,v}^2\right)^{-1}B_{s,v} c_{v,u}B_{s,u}
\left(1+\lambda^2+B_{s,u}^2\right)^{-1}\|\leq \frac{L\|v-u\|_q}{4(1+\lambda^2)}$$
Thus 
$$\left\|B_{s,v}\left(1+\lambda^2+B_{s,v}^2\right)^{-1}-B_{s,u}\left(1+\lambda^2+B_{s,u}^2\right)^{-1}\right\|\leq
\frac{5L\|v-u\|_q}{4(1+\lambda^2)},
$$
In particular, $B_{s,v}\left(1+\lambda^2+B_{s,v}^2\right)^{-1}-B_{s,u}\left(1+\lambda^2+B_{s,u}^2\right)^{-1}$ is compact for each $\lambda\in [0, \infty)$, and the integral 
\[\int_{0}^\infty B_{s,v}\left(1+\lambda^2+B_{s,v}^2\right)^{-1}-B_{s,u}\left(1+\lambda^2+B_{s,u}^2\right)^{-1}d\lambda \] converges absolutely. It follows that $F_{s,v}-F_{s,u}$ is compact. Furthermore,
$$\|F_{s,v}-F_{s,u}\|\leq\frac2\pi\int_0^\infty\frac{5L\|v-u\|_q}{4(1+\lambda^2)}d\lambda=\frac{5L}{4}\|v-u\|_q.$$
This finishes the proof.
\end{proof}

By combining Lemma \ref{lemma:cut} and the proof of Lemma \ref{lemma:difference}, we have the following corollary.
\begin{corollary}\label{coro:differenceCut}
	For any $\varepsilon>0$ and $r>0$, there exists $R(r,\varepsilon)>0$ such that if $R>R(r,\varepsilon)$, then $\|(1-\chi_{v,R})(F_{s,v}-F_{s,u})\|\leq \varepsilon$ for all $s\geq 2N^2L$ and  all $v,u\in E$ with $\|v-u\|_q<r$, where $\chi_{v,R}$ is the characteristic function of $B_q(v,R)$.
\end{corollary}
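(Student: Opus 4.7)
The plan is to rerun the integral-representation argument used in the proof of Lemma \ref{lemma:difference}, while keeping the left cutoff $(1-\chi_{v,R})$ throughout and using Lemma \ref{lemma:cut} to convert it into pointwise-in-$\lambda$ decay in $R$. Concretely, starting from
$$F_{s,v}-F_{s,u}=\frac{2}{\pi}\int_{0}^{\infty}\Bigl[B_{s,v}(1+\lambda^2+B_{s,v}^2)^{-1}-B_{s,u}(1+\lambda^2+B_{s,u}^2)^{-1}\Bigr]\,d\lambda,$$
I would insert the resolvent identity
$$B_{s,v}(1+\lambda^2+B_{s,v}^2)^{-1}-B_{s,u}(1+\lambda^2+B_{s,u}^2)^{-1}=(1+\lambda^2+B_{s,v}^2)^{-1}\bigl((1+\lambda^2)c_{v,u}-B_{s,v}c_{v,u}B_{s,u}\bigr)(1+\lambda^2+B_{s,u}^2)^{-1}$$
from that proof, and multiply by $(1-\chi_{v,R})$ on the left.

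Next, I would bound the two resulting summands of the integrand in operator norm. For the $(1+\lambda^2)c_{v,u}$ piece, split the leftmost resolvent as $(1+\lambda^2+B_{s,v}^2)^{-1/2}(1+\lambda^2+B_{s,v}^2)^{-1/2}$ and apply Lemma \ref{lemma:cut} to obtain $\|(1-\chi_{v,R})(1+\lambda^2+B_{s,v}^2)^{-1/2}\|\leq 1/\sqrt{1/2+\lambda^2+R^2}$; the remaining resolvent factors contribute $1/\sqrt{1+\lambda^2}$ each, while $\|c_{v,u}\|\leq L\|v-u\|_q\leq Lr$ by Lemma \ref{lemma:lipschitzBoth}. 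For the $B_{s,v}c_{v,u}B_{s,u}$ piece, I would absorb one factor of $B_{s,v}$ into the left-hand resolvent via the spectral-theory bound $\|B_{s,v}(1+\lambda^2+B_{s,v}^2)^{-1/2}\|\leq 1$, and use $\|B_{s,u}(1+\lambda^2+B_{s,u}^2)^{-1}\|\leq 1/(2\sqrt{1+\lambda^2})$ on the right. Combining, the integrand is controlled pointwise by
$$\Bigl\|(1-\chi_{v,R})\bigl[B_{s,v}(1+\lambda^2+B_{s,v}^2)^{-1}-B_{s,u}(1+\lambda^2+B_{s,u}^2)^{-1}\bigr]\Bigr\|\leq \frac{3Lr/2}{\sqrt{(1/2+\lambda^2+R^2)(1+\lambda^2)}},$$
uniformly in $s\geq 2N^2L$ and in $u,v$ with $\|u-v\|_q<r$.

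Integrating in $\lambda$ then yields
$$\|(1-\chi_{v,R})(F_{s,v}-F_{s,u})\|\leq \frac{3Lr}{\pi}\int_{0}^{\infty}\frac{d\lambda}{\sqrt{(1/2+\lambda^2+R^2)(1+\lambda^2)}}.$$
The main point requiring care is to verify that the right-hand side tends to $0$ as $R\to\infty$, uniformly in all the remaining parameters: a naive estimate of the $\lambda$-integral would diverge, so I would split the integration region at $\lambda=R$ and use $\sqrt{1/2+\lambda^2+R^2}\geq R$ on $[0,R]$ and $\geq \lambda$ on $[R,\infty)$; this produces bounds of order $(\log R)/R$ and $1/R$ respectively, giving an overall $O((\log R)/R)$ rate. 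Choosing $R(r,\varepsilon)$ large enough that this quantity is at most $\varepsilon$ then completes the proof.
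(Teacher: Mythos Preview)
Your proposal is correct and follows essentially the same route as the paper: both start from the integral representation of $F_{s,v}-F_{s,u}$, insert the resolvent identity from Lemma~\ref{lemma:difference}, and then factor the leftmost resolvent as $(1+\lambda^2+B_{s,v}^2)^{-1/2}(1+\lambda^2+B_{s,v}^2)^{-1/2}$ so that Lemma~\ref{lemma:cut} applies to the outer factor carrying $(1-\chi_{v,R})$. Your treatment is in fact slightly more careful than the paper's in two places: you derive the first-term bound directly from Lemma~\ref{lemma:cut} via the square-root splitting (the paper asserts the stronger estimate $\|(1-\chi_{v,R})(1+\lambda^2+B_{s,v}^2)^{-1}\|\le (1/2+\lambda^2+R^2)^{-1}$ without further justification), and you explicitly verify that the resulting $\lambda$-integral is $O((\log R)/R)$, whereas the paper leaves this implicit.
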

\begin{proof}
By following the proof of Lemma \ref{lemma:cut}, we have 
\begin{align}
	&(1-\chi_{v,R})(F_{s,v}-F_{s,u}) \notag\\
	=&\int_0^\infty (1-\chi_{v,R}) \left(1+\lambda^2+B_{s,v}^2\right)^{-1}(1+\lambda^2) c_{v,u}
	\left(1+\lambda^2+B_{s,u}^2\right)^{-1} d\lambda \notag \\
	& + \int_0^\infty (1-\chi_{v,R}) \left(1+\lambda^2+B_{s,v}^2\right)^{-1}B_{s,v} c_{v,u}B_{s,u}
	\left(1+\lambda^2+B_{s,u}^2\right)^{-1} d\lambda \label{eq:estimate}.
\end{align}
By the same proof of Lemma \ref{lemma:difference}, we have
$$\| (1-\chi_{v,R})\left(1+\lambda^2+B_{s,v}^2\right)^{-1}\|\leq\frac{1}{1/2+\lambda^2+R^2},$$
for all $s\geq 2N^2L$ and $\lambda>0$. This together with the fact 
\[ \|(1+\lambda^2) c_{v,u}
\left(1+\lambda^2+B_{s,u}^2\right)^{-1}\| \leq L\|v-u\|_q  \]
takes care of the first term on the right hand side of \eqref{eq:estimate}. Now for the second term on the right hand side of \eqref{eq:estimate}, we observe that the integrand is the composition of three operators
\[  (1-\chi_{v,R}) \left(1+\lambda^2+B_{s,v}^2\right)^{-1/2}, \left(1+\lambda^2+B_{s,v}^2\right)^{-1/2} B_{s, v} \textup{ and } c_{v,u}B_{s,u}
\left(1+\lambda^2+B_{s,u}^2\right)^{-1},\]
which satisfy
\[ \|(1-\chi_{v,R}) \left(1+\lambda^2+B_{s,v}^2\right)^{-1/2} \| \leq  \frac{1}{\sqrt{1/2+\lambda^2+R^2}},  \]
\[ \|\left(1+\lambda^2+B_{s,v}^2\right)^{-1/2} B_{s, v}\|\leq 1, \]
and 
\[ \|B_{s,u}
\left(1+\lambda^2+B_{s,u}^2\right)^{-1}\| \leq \frac{1}{2\sqrt{1+\lambda^2}}. \]
These estimates take care of the second term on the right hand side of \eqref{eq:estimate}. This finishes the proof. 
\end{proof}

It is easy to see that Lemma \ref{lemma:cut}, Corollary \ref{coro:cut}, and Corollary \ref{coro:differenceCut} also hold if the cut-off functions $\chi_{v,R}$ is  replaced by a  smooth cut-off function. In the following, we require these smooth cut-off functions to  have small Lipschitz constants. In particular,  we have the following lemma. 
\begin{lemma}\label{lemma:cut-off}
	For any $R>0$, there exists a smooth function $\psi_{0,R}$ on $E$ such that $\psi_{0,R}\equiv 1$ in $B_q(R/2)$,  $\psi_{0,R}\equiv 0$ in $E-B_q(R)$, and $\psi_{0,R}$ is at most $3R^{-1}$-Lipschitz with respect to both $\ell^q$-norm and $\ell^2$-norm. Furthermore, if we set
	$$\psi_{v,R}(w)=\psi_{0,R}(w-v),~\forall v\in E,$$
	then $\sup_{w\in E}|\psi_{v,R}(w)-\psi_{u,R}(w)|\leq 3R^{-1}\|v-u\|_q$.
\end{lemma}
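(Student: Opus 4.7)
My plan is to construct $\psi_{0,R}$ as a mollification of $\eta\circ\|\cdot\|_q$ for a suitably chosen one-variable smooth cutoff $\eta$. First I would fix some small $\epsilon\in (0,R/20)$ and pick $\eta\in C^{\infty}(\R)$ with $\eta\equiv 1$ on $(-\infty, R/2+\epsilon]$, $\eta\equiv 0$ on $[R-\epsilon,\infty)$, and $|\eta'|\le 3R^{-1}$. Such $\eta$ exists, since the transition interval has length $R/2-2\epsilon>R/3$, so a standard mollification of a piecewise linear ramp of slope slightly greater than $2/R$ meets the derivative bound. Next, pick a non-negative $\phi_\epsilon\in C_c^{\infty}(E)$ supported in the $\ell^2$-ball $B_2(\epsilon)$ with $\int_E\phi_\epsilon=1$, and set
$$\psi_{0,R}(w)\coloneqq \int_E \eta(\|w-z\|_q)\,\phi_\epsilon(z)\,dz.$$
Since $\phi_\epsilon$ is smooth and compactly supported while $\eta\circ\|\cdot\|_q$ is bounded and measurable, standard properties of convolution ensure that $\psi_{0,R}$ is smooth.

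The support properties follow from the triangle inequality and the inequality $\|z\|_q\le\|z\|_2$ from Lemma \ref{lemma:q-norm<=2-norm}. If $\|w\|_q\le R/2$, then for any $z$ in $\supp(\phi_\epsilon)\subset B_2(\epsilon)$ we have $\|w-z\|_q\le R/2+\|z\|_q\le R/2+\epsilon$, so the integrand is identically $1$ and $\psi_{0,R}(w)=1$. If $\|w\|_q\ge R$, then $\|w-z\|_q\ge R-\epsilon$ for every such $z$, so the integrand vanishes and $\psi_{0,R}(w)=0$.

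For the Lipschitz bounds, moving the difference inside the integral and combining the $3R^{-1}$-Lipschitz property of $\eta$ with the reverse triangle inequality for $\|\cdot\|_q$ gives
$$|\psi_{0,R}(w_1)-\psi_{0,R}(w_2)|\le \frac{3}{R}\int_E \big|\|w_1-z\|_q-\|w_2-z\|_q\big|\,\phi_\epsilon(z)\,dz\le \frac{3}{R}\|w_1-w_2\|_q.$$
By Lemma \ref{lemma:q-norm<=2-norm}, $\|w_1-w_2\|_q\le\|w_1-w_2\|_2$, so the same constant $3R^{-1}$ controls the $\ell^2$-Lipschitz estimate. The last assertion of the lemma follows immediately: since $\psi_{v,R}(w)-\psi_{u,R}(w)=\psi_{0,R}(w-v)-\psi_{0,R}(w-u)$, the $\ell^q$-Lipschitz bound yields
$$\sup_{w\in E}|\psi_{v,R}(w)-\psi_{u,R}(w)|\le 3R^{-1}\|v-u\|_q.$$

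There is no substantial obstacle to this plan. The only point demanding care is to widen the plateau and tighten the vanishing region of $\eta$ by $\epsilon$ before mollifying, which guarantees that the convolution is still identically $1$ on $B_q(R/2)$ and identically $0$ outside $B_q(R)$ after the $\epsilon$-enlargement caused by $\phi_\epsilon$; this is why I chose $\epsilon$ small enough (e.g.\ $\epsilon<R/20$) so that the required slope $1/(R/2-2\epsilon)$ still fits under the bound $3R^{-1}$.
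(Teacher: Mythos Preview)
Your proposal is correct and follows essentially the same approach as the paper: exploit the $\ell^q$-distance of at least $R/2$ between $B_q(R/2)$ and $E\setminus B_q(R)$ to interpolate with Lipschitz constant below $3R^{-1}$, then invoke Lemma~\ref{lemma:q-norm<=2-norm} for the $\ell^2$-bound and read off the translation estimate directly from the $\ell^q$-Lipschitz property. The paper merely asserts that such a smooth cutoff exists, whereas you supply an explicit mollification; this extra care is harmless and in fact clarifies why smoothness is compatible with the Lipschitz bound.
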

\begin{proof}
	Note that the $\ell^q$-distance between $B_q(R/2)$ and $E-B_q(R)$ is $\geq R/2$. Thus we may choose the cut-off function $\psi_{0,R}$ to  have $\ell^q$-Lipschitz constant $\leq 3R^{-1}$. Consequently, its $\ell^2$-Lipschitz constant is also $\leq 3R^{-1}$ by (1) in Lemma \ref{lemma:q-norm<=2-norm}.
	
	By definition, we have
	$$|\psi_{v,R}(w)-\psi_{u,R}(w)|=|\psi_{0,R}(w-v)-\psi_{0,R}(w-u)|\leq \frac 3 R\|v-u\|_q.$$
	It follows that $\|\psi_{v,R}-\psi_{u,R}\|\leq 3R^{-1}\|v-u\|_q$.
\end{proof}
We also need to estimate the commutators between $F_{s,v}$ and functions.
\begin{lemma}\label{lemma:commutator}
	If $h$ be a smooth function on $E$ with $\ell^2$-Lipschitz constant $L_h$, then for any $s\geq 1$ and $v\in E$, $[F_{s,v},h]$ is compact and
	$$\|[F_{s,v},h]\|\leq\frac{9L_h}{4s}.$$
\end{lemma}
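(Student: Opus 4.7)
The plan is to commute $h$ through the integral representation
$$F_{s,v}=\frac{2}{\pi}\int_0^\infty B_{s,v}(1+\lambda^2+B_{s,v}^2)^{-1}\,d\lambda$$
already used in the proof of Lemma \ref{lemma:difference}, and then to bound the integrand term by term. The first observation is that the Clifford potential $C_v(x)=c(\Psi(x-v))$ is a fiberwise endomorphism of $\Bigwedge^*E$ depending only on the base point $x\in E$, so it commutes with multiplication by the scalar function $h$. Consequently,
$$[B_{s,v},h]=s^{-1}[D,h]=s^{-1}\overbar c(dh),$$
which is a pointwise multiplication operator of operator norm at most $s^{-1}\sup_{x\in E}|dh(x)|_{2}\leq s^{-1}L_h$, since the $\ell^2$-Lipschitz constant of $h$ controls $|dh|_2$ pointwise.

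For the norm bound, I would expand
\begin{align*}
[B_{s,v}(1+\lambda^2+B_{s,v}^2)^{-1},h]=&[B_{s,v},h](1+\lambda^2+B_{s,v}^2)^{-1}\\
&-B_{s,v}(1+\lambda^2+B_{s,v}^2)^{-1}[B_{s,v}^2,h](1+\lambda^2+B_{s,v}^2)^{-1},
\end{align*}
together with $[B_{s,v}^2,h]=[B_{s,v},h]B_{s,v}+B_{s,v}[B_{s,v},h]$. Applying the spectral bounds $\|B_{s,v}(1+\lambda^2+B_{s,v}^2)^{-1}\|\leq\frac{1}{2\sqrt{1+\lambda^2}}$, $\|(1+\lambda^2+B_{s,v}^2)^{-1}\|\leq\frac{1}{1+\lambda^2}$, and $\|B_{s,v}^2(1+\lambda^2+B_{s,v}^2)^{-1}\|\leq 1$ (all immediate from the self-adjoint functional calculus for $B_{s,v}$), the three resulting terms are bounded in operator norm by $\frac{s^{-1}L_h}{1+\lambda^2}$, $\frac{s^{-1}L_h}{4(1+\lambda^2)}$, and $\frac{s^{-1}L_h}{1+\lambda^2}$ respectively. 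Integrating and using $\int_0^\infty\frac{d\lambda}{1+\lambda^2}=\frac{\pi}{2}$ then yields
$$\|[F_{s,v},h]\|\leq\frac{2}{\pi}\cdot\left(1+\tfrac14+1\right)\cdot\frac{\pi}{2}\cdot s^{-1}L_h=\frac{9L_h}{4s}.$$

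For the compactness claim, each of the three terms above contains a factor of $(1+\lambda^2+B_{s,v}^2)^{-1}$, which is compact for every $\lambda\geq 0$ by the Rellich-type argument in the proof of Lemma \ref{lemma:Fredholm}. Since the norm estimates above are dominated uniformly by $C/(1+\lambda^2)$, the integrand is a norm-continuous, norm-integrable family of compact operators, so the integral is itself compact. The only point requiring care is the grouping in the third contribution: one must keep $B_{s,v}^2(1+\lambda^2+B_{s,v}^2)^{-1}$ intact and bounded by $1$, rather than splitting the two copies of $B_{s,v}$ and estimating each by $\frac{1}{2\sqrt{1+\lambda^2}}$, since the naive split would produce a divergent $\lambda$-integral and lose the compactness of the integrand.
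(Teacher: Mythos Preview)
Your proof is correct and essentially identical to the paper's. Both use the integral representation of $F_{s,v}$, expand the commutator via $[R^{-1},h]=-R^{-1}[B_{s,v}^2,h]R^{-1}$ with $R=1+\lambda^2+B_{s,v}^2$, split $[B_{s,v}^2,h]$ into two pieces, and bound the resulting three terms using the same spectral estimates to obtain the constant $9/4$; your explicit treatment of compactness (via the norm-integrable family of compacts) fills in a step the paper leaves implicit.
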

\begin{proof}
	For simplicity, we assume that $v=0$ and omit the subscript $v$. 
	Note that
	$$F_s=\frac 2\pi\int_0^\infty B_s\left(1+\lambda^2+B_s^2\right)^{-1}d\lambda,$$
	where the integral converges in the strong operator topology. For any $\lambda\geq 0$, we have
	\begin{align*}
		&[B_s\left(1+\lambda^2+B_s^2\right)^{-1},h]\\
		=&B_s[\left(1+\lambda^2+B_s^2\right)^{-1},h]+[B_s,h]\left(1+\lambda^2+B_s^2\right)^{-1}\\
		=&-B_s\left(1+\lambda^2+B_s^2\right)^{-1}\left(B_s[B_s,h]+[B_s,h]B_s\right)\left(1+\lambda^2+B_s^2\right)^{-1}\\
		&+[B_s,h]\left(1+\lambda^2+B_s^2\right)^{-1}
	\end{align*}
	Since $B_s=s^{-1}D+C$, we have 
	$$\|[B_s,h]\|= \|s^{-1}[D,h]\|\leq\frac{L_h}{s}.$$
	Note that for any $\lambda\geq 0$, 
	$$\|\left(1+\lambda^2+B_s^2\right)^{-1}\|\leq \frac{1}{1+\lambda^2},$$
	$$\|B_s\left(1+\lambda^2+B_s^2\right)^{-1}\|\leq
	\frac{1}{2\sqrt{1+\lambda^2}},
	$$
	$$\|B_s^2\left(1+\lambda^2+B_s^2\right)^{-1}\|\leq 1.
	$$
	Therefore,
	$$\|[B_s\left(1+\lambda^2+B_s^2\right)^{-1},h_R]\|\leq \frac{L_h}{s}\cdot \frac 9 4\cdot \frac{1}{1+\lambda^2}.$$
	Consequently, we have
	$$\|[F_s,h_R]\|\leq \frac 2 \pi\int_0^\infty\frac{L_h}{s}\cdot\frac 9 4\cdot\frac{1}{1+\lambda^2}d\lambda= \frac{9L_h}{4s}.$$
	This finishes the proof.
\end{proof}

We summarize the computation of this subsection in  the following proposition. 
\begin{proposition}\label{prop:phi}
	For any $\varepsilon\in(0,1)$, there is a smooth odd function $\Phi\colon\R\to\R$ such that 
	$$\|\Phi-x(1+x^2)^{-1/2}\|\leq \varepsilon,~  \lim_{x\to \pm \infty} \Phi(x)=\pm 1,$$
	$\Phi$ has compactly supported distributional Fourier transform,
	  and the operator 
	$$\Phi_{s,v}\coloneqq \Phi(B_{s,v})$$
	satisfies the following properties.
	\begin{enumerate}[label=$({\arabic*})$]
		\item For any $s\geq 1$ and $v\in E$, we have that $\|\Phi_{s,v}\|\leq 1$, and $(\Phi_{s,v})^2-1$ is compact.
		\item For each pair of $v,u\in E$, the path $\{\Phi_{s,v}\}_{s\in [1,+\infty)}$ is continuous with respect to strong operator topology, and the paths $\{(\Phi_{s,v})^2-1\}_{s\in [1,+\infty)}$ and $\{\Phi_{s,v}-\Phi_{s,u}\}_{s\in [1,+\infty)}$ are uniformly norm-continuous.
		\item There is $C_1>0$ such that for any function $h$ on $E$ with $\ell^2$-Lipschitz constant $L_h$, we have $\|[\Phi_{s,v},h]\|\leq s^{-1}C_1L_h$ for any $s\geq 1$.
		\item  $\Phi_{s,v}-\Phi_{s,u}$ is compact for all $v,u\in E$, and there is $L_1>0$ such that $\|\Phi_{s,v}-\Phi_{s,u}\|\leq L_1\|v-u\|_q$ for all $s\geq 1$.
		\item For each $r>0$, there exists $R_1>0$ such that $\|(1-\chi_{v,R_1})(\Phi_{s,v}-\Phi_{s,u})\|\leq 3\varepsilon$ for all $s\geq 2N^2L$ and  $v,u\in E$ with $\|v-u\|_q\leq r$, where $\chi_{v,R_1}$ is the characteristic function of $B_q(v,R_1)$.
		\item There is $R_2>0$ such that $\|(1-\chi_{v,R_2})((\Phi_{s,v})^2-1)\|\leq 3\varepsilon$ for any $v\in E$ and $s\geq 2N^2L$.
		\item\label{item:finiteProp} There is $C_2>0$ such that the $\ell^2$-propagation of $\Phi_{s,v}$ is at most $C_2s^{-1}$. Hence the $\ell^q$-propagation of $\Phi_{s,v}$ is also at most $C_2s^{-1}$.
	\end{enumerate}
\end{proposition}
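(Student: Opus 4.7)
The strategy is to construct $\Phi$ by smoothing $\chi(x) := x(1+x^2)^{-1/2}$ in a way that imposes a compactly supported distributional Fourier transform, and then derive properties (1)--(6) from the analogous estimates for $F_{s,v} = \chi(B_{s,v})$ proved in Lemmas \ref{lemma:Fredholm}--\ref{lemma:commutator} combined with the functional calculus bound $\|\Phi_{s,v}-F_{s,v}\| \leq \|\Phi-\chi\|_\infty \leq \varepsilon$. The one genuinely new ingredient, which is the reason for insisting on compact Fourier support, is the finite propagation bound \ref{item:finiteProp}, which comes from finite propagation speed of the wave group of $B_{s,v}$.

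For the construction of $\Phi$ I would use a standard Fourier cutoff: pick a smooth even bump $\eta$ supported in $[-\Lambda,\Lambda]$ with $\eta \equiv 1$ near $0$, and set $\Phi := \chi * \check\eta$, where $\check\eta$ is the inverse Fourier transform of $\eta$. Since $\chi' = (1+x^2)^{-3/2}$ is Schwartz, for $\Lambda = \Lambda(\varepsilon)$ sufficiently large this produces a smooth odd function with $\|\Phi - \chi\|_\infty \leq \varepsilon$, $\Phi(x) \to \pm 1$ at $\pm \infty$, $\|\Phi\|_\infty \leq 1$ (after a routine symmetrization/rescaling step), and $\widehat\Phi = \widehat\chi \cdot \eta$ supported in $[-\Lambda,\Lambda]$.

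To deduce (1)--(6), observe that $\Phi - \chi$ and $\chi^2 - 1 = -(1+x^2)^{-1}$ both lie in $C_0(\R)$, while $B_{s,v}$ has compact resolvent by the proof of Lemma \ref{lemma:Fredholm}. Hence $\Phi_{s,v} - F_{s,v}$ and $(\Phi_{s,v})^2 - 1$ are both compact, giving (1) and the compactness half of (4). The norm estimate in (4) follows from Lemma \ref{lemma:difference}; (2) is the standard continuity of bounded Borel functional calculus applied to the strongly continuous family $s \mapsto B_{s,v}$; and (5), (6) follow directly from Corollary \ref{coro:differenceCut} and Corollary \ref{coro:cut} combined with $\|(\Phi - \chi)(B_{s,v})\| \leq \varepsilon$, the extra error being absorbed in the factor $3\varepsilon$. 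For (3), I would use the Fourier representation
\[ [\Phi_{s,v}, h] \,=\, \frac{1}{2\pi}\int_{-\Lambda}^{\Lambda} \widehat\Phi(\xi)\, [e^{i\xi B_{s,v}}, h]\, d\xi \]
together with Duhamel's formula $[e^{i\xi B_{s,v}}, h] = i\int_0^\xi e^{it B_{s,v}}[B_{s,v},h] e^{i(\xi-t)B_{s,v}}\,dt$ and the pointwise bound $\|[B_{s,v}, h]\| = s^{-1}\|[D,h]\| \leq s^{-1}L_h$, yielding $\|[\Phi_{s,v},h]\| \leq C_1 s^{-1} L_h$ with $C_1 := (2\pi)^{-1}\int_{-\Lambda}^{\Lambda} |\xi\, \widehat\Phi(\xi)|\, d\xi$.

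For (7), the same Fourier representation
\[ \Phi_{s,v} \,=\, \frac{1}{2\pi}\int_{-\Lambda}^{\Lambda} \widehat\Phi(\xi)\, e^{i\xi B_{s,v}}\, d\xi \]
reduces the claim to the finite propagation speed theorem applied to the wave group of the self-adjoint operator $B_{s,v} = s^{-1}D + C_v$. Since the principal symbol of $B_{s,v}$ equals $s^{-1}$ times that of the first-order operator $D = d + d^*$ (the bounded zeroth-order perturbation $C_v$ being irrelevant to propagation speed) and $D$ has $\ell^2$-propagation speed one, $e^{i\xi B_{s,v}}$ has $\ell^2$-propagation at most $|\xi|/s$. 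Integrating in $\xi$ over $[-\Lambda,\Lambda]$ gives $\ell^2$-propagation of $\Phi_{s,v}$ bounded by $\Lambda/s =: C_2/s$, and Lemma \ref{lemma:q-norm<=2-norm}(2) then yields the same bound for $\ell^q$-propagation. The main obstacle is the bookkeeping required to ensure that one single $\Phi$ simultaneously achieves $\|\Phi\|_\infty \leq 1$, $\|\Phi - \chi\|_\infty \leq \varepsilon$, oddness, $\Phi(\pm \infty) = \pm 1$, and compact Fourier support, with the resulting constants $C_1, C_2, R_1, R_2, L_1$ depending only on $\varepsilon$; this is handled by first fixing $\Lambda = \Lambda(\varepsilon)$ for the approximation bound and then reading off the remaining constants from the lemmas cited above.
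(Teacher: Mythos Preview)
Your approach matches the paper's: the paper gives no formal proof, only remarking that (7) follows from finite propagation speed of wave operators together with Lemma~\ref{lemma:q-norm<=2-norm}, with (1)--(6) implicitly read off from Lemmas~\ref{lemma:Fredholm}--\ref{lemma:commutator} and Corollaries~\ref{coro:cut}--\ref{coro:differenceCut}. Your construction of $\Phi$ via Fourier cutoff and your treatment of (1), (2), (5), (6), (7) are correct.

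There is, however, a slip in your derivation of the Lipschitz bound in (4). It does \emph{not} follow from Lemma~\ref{lemma:difference} together with $\|\Phi_{s,v}-F_{s,v}\|\leq\varepsilon$: the triangle inequality only yields $\|\Phi_{s,v}-\Phi_{s,u}\|\leq \tfrac{5L}{4}\|v-u\|_q+2\varepsilon$, and the additive $2\varepsilon$ does not vanish as $\|v-u\|_q\to 0$. The fix is to reuse the Duhamel argument you already gave for (3): since $\Phi-\chi$ is Schwartz (its derivative is $\chi'*\check\eta-\chi'$ and it vanishes at infinity), one has a norm-convergent Fourier inversion $(\Phi-\chi)(B_{s,v})=\tfrac{1}{2\pi}\int\widehat{\Phi-\chi}(\xi)\,e^{i\xi B_{s,v}}\,d\xi$, and Duhamel gives
\[
\bigl\|(\Phi-\chi)(B_{s,v})-(\Phi-\chi)(B_{s,u})\bigr\|\;\leq\;\Bigl(\tfrac{1}{2\pi}\int|\xi\,\widehat{\Phi-\chi}(\xi)|\,d\xi\Bigr)\cdot L\,\|v-u\|_q.
\]
Adding this to Lemma~\ref{lemma:difference} yields (4) with an explicit $L_1$.

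A related caution about your formula for (3): writing $[\Phi_{s,v},h]=\tfrac{1}{2\pi}\int_{-\Lambda}^{\Lambda}\widehat\Phi(\xi)\,[e^{i\xi B_{s,v}},h]\,d\xi$ is delicate, because $\widehat\Phi$ is a distribution with a principal-value singularity at $\xi=0$ while $\xi\mapsto e^{i\xi B_{s,v}}$ is only strongly continuous. The cleanest route is again the splitting $\Phi=\chi+(\Phi-\chi)$: Lemma~\ref{lemma:commutator} handles $\chi$ directly, and the Schwartz remainder $(\Phi-\chi)$ admits the honest Bochner-integral Fourier inversion with the Duhamel bound. This yields $C_1=\tfrac{9}{4}+\tfrac{1}{2\pi}\int|\xi\,\widehat{\Phi-\chi}(\xi)|\,d\xi$ and avoids the distributional pairing altogether.
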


In particular, the last item \ref{item:finiteProp} follows from the finite propagation speed of wave operators and that $\Phi$ has compactly supported distributional Fourier transform, and Lemma \ref{lemma:q-norm<=2-norm}.
\subsection{Twisted Roe algebras}\label{sec:twistedRoe}
In this subsection, we introduce the twisted Roe algebras and the associated $K$-theoretic maps. 

A general principle for proving the coarse Baum--Cones conjecture is to reduce the proof to a finite dimensional case. For example, in the case of the coarse Baum--Cones conjecture for spaces that coarsely embed into Hilbert space, the proof can be reduced to  the case of coarse disjoint unions of finite metric spaces that uniformly coarsely embed into Hilbert space \cite[Section 12.5]{willett2020higher}. 

 Let $X$ be a coarse disjoint union space, namely $X=\bigsqcup	 X_n$ with $d(X_n,X_m)\to \infty.$ Assume that $X$ coarsely embeds into $\ell^q$, and each $X_n$ is a metric space with finite diameter. Without loss of generality, we assume that $X$ is locally finite with bounded geometry. In this case, each $X_n$ is finite. We may assume that each $X_n$ coarsely embeds into a finite dimensional subspace $(E_n,\|\cdot\|_q)$. We will use $E$ to denote the collection of all $E_n$'s.
 
 Let us fix some notational conventions from \cite[Section 12.2]{willett2020higher}. Let $P_r(X)$ be the Rips complex of $X$ at scale $r$. Let $(B_{r,x})_{r\geq 0,x\in X}$ be the \emph{good cover system} of $P_r(X)$ for $X$ that satisfies the following properties:
 \begin{enumerate}
 	\item $(B_{r,x})_{x\in X}$ covers $P_r(X)$ by disjoint Borel sets;
 	\item $x\in B_{r,x}$, and $B_{r,x}$ is contained in the union of the simplices that contain $x$;
 	\item $B_{r,x}\subset B_{s,x}$ if $r\leq s$.
 \end{enumerate}
 See \cite[Definition 12.2.1]{willett2020higher}. Then the coarse embedding $f\colon X\to E$ extends to a Borel map $f\colon P_r(X)\to E$ by sending $B_{r,x}$ to $f(x)$, which is compatible with the inclusion $P_r(X)\subset P_s(X)$.
 
 Let $Z_r(X)$ be the collection of points in the simplicial complex $P_r(X)$ with rational coordinates. We fix $H_{P_r(X)}\coloneqq \ell^2(Z_r(X))\otimes\ell^2(\N)$ as a geometric $P_r(X)$-module. Elements in $B(H_{P_r(X)})$ are then represented as $X$-by-$X$ matrices: for each $T\in B(H_{P_r(X)})$, we set 
 $$T(x,y)=(\chi_{B_{r,x}}\otimes 1) T(\chi_{B_{r,y}}\otimes 1)\colon (\chi_{B_{r,y}}\otimes 1)H_{P_r(X)}\to (\chi_{B_{r,x}}\otimes 1)H_{P_r(X)}.$$
 It is clear that $T$ is locally compact if and only if $T(x,y)$ is compact for any $x,y\in X$, and $T$ has finite propagation if and only if $T(x,y)=0$ if $d(x,y)\geq d$ for some $d>0$. Therefore, since $X$ is a coarse disjoint union, any element $T$ in $C^*(P_r(X))$ associated to the module $H_{P_r(X)}$ is block-wise diagonal, that is, it is represented by $(T^n)_{n\in\N}$, where $T^n$ acts on $H_{P_r(X_n)}$. The following discussion will focus on $P_r(X)$ for some fixed $r$, hence we write $P=P_r(X)$ and $P_n=P_r(X_n)$ for simplicity. The elements in the good cover system will also be denoted by $(B_x)_{x\in X}$.

Let $H_{E_n}\coloneqq L^2(E_n,\Bigwedge^*E_n)$ be the Hilbert space of differential forms over $E_n$ with respect to the Euclidean metric $\|\cdot\|_2$ on $E_n$. We denote $H_{P,E}=\oplus(H_{P_r(X_n)}\otimes H_{E_n})$, which is both a geometric $X$-module and $E$-module. In fact, we will only discuss block-wise diagonal operators acting on $H_{P,E}$.

We define the twisted Roe algebra as follows.
\begin{definition}\label{def:Aalgebra}
	We define $A_{alg}(P,E)$ to be the collection of uniformly continuous paths of operators $\left(T_s=(T_s^n)_{n\in\N}\right)_{s\geq 1}$, where $T_s^n$ acts on $H_{P_n}\otimes H_{E_n}$, such that $T_s(x,y)$ is compact for any $x,y\in X$ and furthermore the following properties hold.
	\begin{enumerate}
		\item $\|T_s(x,y)\|$ is uniformly bounded on $X\times X\times[1,+\infty)$.
		\item $T_s$ has uniformly finite propagation along $P$. Equivalently, there exists $r_1>0$ such that $T_s(x,y)=0$ for all $s\geq 1$, provided that  $d(x,y)>r_1$. 
		\item the $\ell^q$-propagation of $T_s(x,y)$ is uniformly finite and tends to zero uniformly as $s\to\infty$ for all $x, y\in X$,  and
		\item \label{Aalgebra5} there exists $r_2>0$ such that the $E$-support of $T_s(x,y)$ is contained in the $\ell^q$-ball centered at $f(y)$ with radius $r_2$ for all $x,y\in X$.
	\end{enumerate} 
	The norm of $(T_s)$ is defined by $\sup_{s\geq 1}\|T_s\|$, and the the completion of $A_{alg}(P,E)$ is denoted by $A(X,E)$.
\end{definition}

Similarly, we define $A_L(P,E)$ to be the completion of the algebra of norm-bounded and norm-continuous maps $\mathbb T \colon [1,\infty) \to A_{alg}(P,E)$ such that   the propagation  of $\mathbb T(t)$ along $P$ tends to zero, as $t\to \infty$, where the completion is taken with respect to the norm
\[ \|\mathbb T\| =  \sup_{t\geq 1} \|\mathbb T(t)\|.\]

Let $C^*(P,E)$ be the Roe algebra associated with the geometric $P$-module $H_{P,E}$. Clearly, there is a homomorphism
$$\imath\colon A(P,E)\to C^*(P,E)$$
induced by evaluation at $s=1$ and  forgetting the last condition \eqref{Aalgebra5} in Definition \ref{def:Aalgebra}. 

There exists naturally an isomorphism
$$\id_*\colon K_*(C^*(P))\to K_*(C^*(P,E))$$
induced by tensoring with an arbitrary fixed rank-one projection. We shall show in this subsection that this isomorphism factors through $K_*(A(P,E))$, and in the next subsection the coarse Baum--Connes conjecture holds for $A(P,E)$.

Let $T\in B(H_P)$ and $S\in B(H_E)$ be block-wise diagonal operators. They induce operators $T\otimes 1$ and $1\otimes S$ that act on $H_{P,E}$. When no confusion is likely to arise,  we shall write $T$ for  $T\otimes 1$ and $S$ for $1\otimes S$. In the following, we will first construct an index map
$$\ind\colon K_*(C^*(P))\to K_*(A(P,E)).$$
We will only give the construction of the map $\ind$ for $K_0$ groups, since  the case of $K_1$ groups follows easily by a standard  suspension argument. 

Let $\Phi_{s,v}$ be the operator obtained from Proposition \ref{prop:phi}. Set 
\begin{equation}\label{eq:constant}
s_n\coloneqq 2L(\dim E_n)^2=2(1+q\cdot 2^{q})(\dim E_n)^2.
\end{equation}
Consider the  path of block-wise diagonal operators $\Phi = (\Phi_s)_{s\geq 1}$ with 
$$\Phi_s\colon H_{P,E}\to H_{P,E}$$
given by the formula
$$\Phi_s\xi=\Phi_{s+s_n,f(x)}(\xi) \textup{ for all } x\in X_n \text{ and }\xi\in (\chi_{B_x}\otimes 1)H_{P,E}, $$
where $f\colon X\to \ell^q$ is the given coarse embedding. With respect to the even-odd grading on each $\Bigwedge^*E_n$, we have
$$\Phi=\begin{pmatrix}
	0&\Phi_-\\
	\Phi_+&0
\end{pmatrix}$$
as each $\Phi_{s,v}$ is obtained by  the functional calculus of an odd function of  an odd-graded operator.

Let $\psi_{v,R}$ be the cut-off function  from Lemma \ref{lemma:cut-off}. For each $R>0$, we define a bounded operator 
$$\psi_R\colon H_{P,E}\to H_{P,E} $$
by 
$$\psi_R\xi=\psi_{f(x),R}\xi\textup{ for all } \xi\in (\chi_{B_x}\otimes 1)H_{P,E}.$$

Let $\alpha$ be an element of $K_0( C^*(P))$. Then $\alpha=[\cP] - [\cP']$, where $\cP$ and $\cP'$ are projections in $M_n(C^*(P)^+)$ such that  $\cP-\cP'\in M_n(C^*(P))$. In the following, we assume that $\cP\in C^*(P)$ without loss of generality, and explain  the construction of the index map on the element $\cP$.

Let  $\cP\otimes 1\colon H_{P,E} \to H_{P,E}$ be the operator constructed from the operator $\cP$. For simplicity, we shall write $\cP$ in place of $\cP\otimes 1$ below.
Let us consider the following idempotent.
\begin{equation}\label{eq:index}
	\cP_\Phi\coloneqq \begin{pmatrix}
		\cP-(\cP-\cP\Phi_+\cP\Phi_-)^2&(2\cP-\cP\Phi_+\cP\Phi_-)\cP\Phi_+(\cP-\cP\Phi_-\cP\Phi_+)\\ \cP\Phi_-(\cP-\cP\Phi_+\cP\Phi_-)&(\cP-\cP\Phi_-\cP\Phi_+)^2
	\end{pmatrix}
\end{equation}
where the above expression follows the standard formula for  the index class of the generalized Fredholm operator $\cP\otimes \Phi$.  For $R>0$, we define
\begin{align*}
	\cP_{\Phi,R}\coloneqq &\begin{pmatrix}
		\cP-\psi_R(\cP-\cP\Phi_+\cP\Phi_-)^2\psi_R&\psi_R(2\cP-\cP\Phi_+\cP\Phi_-)\cP\Phi_+(\cP-\cP\Phi_-\cP\Phi_+)\psi_R\\\psi_R\cP\Phi_-(\cP-\cP\Phi_+\cP\Phi_-)\psi_R&\psi_R(\cP-\cP\Phi_-\cP\Phi_+)^2\psi_R
	\end{pmatrix}.
\end{align*}
The following proposition shows that the map 
\[  [\cP] \mapsto [\cP_{\Phi,R}]-[\begin{psmallmatrix}
	\cP & 0 \\ 0 & 0 
\end{psmallmatrix}]\]
gives a well-defined homomorphism
 $$\ind\colon K_*(C^*(P))\to K_*(A(P,E)).$$
\begin{proposition}\label{prop:indMap}
	If $\cP\in C^*(P)$ and $\cP_{\Phi,R}$ are as above, then
	$$\cP_{\Phi,R}-\begin{psmallmatrix}
		\cP & 0 \\ 0 & 0 
	\end{psmallmatrix}\in A(P,E)$$
	and there is $C>0$ such that $\|\cP_{\Phi,R}\|\leq C$. Furthermore, if $\cP\in C^*(P)$ and $\cP^2=\cP$, then there exist $R>0$ and a function $\Phi$ as in Proposition \ref{prop:phi} depending only on $\cP$ such that
	$$\|(\cP_{\Phi,R})^2-\cP_{\Phi,R}\|< \frac 1 4.$$
	Hence the formal difference $[P_{\Phi,R}]-[\begin{psmallmatrix}
		P & 0 \\ 0 & 0 
	\end{psmallmatrix}]$ defines a class in $K_0(A(P,E))$, denoted by $\ind([\cP])$.
\end{proposition}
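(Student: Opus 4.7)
The proposition has three parts: (i) a uniform norm bound on $\cP_{\Phi,R}$; (ii) membership of $\cP_{\Phi,R}-\begin{psmallmatrix}\cP&0\\0&0\end{psmallmatrix}$ in $A(P,E)$; and (iii) approximate idempotency when $\cP^2=\cP$. Parts (i) and (ii) are routine. For (i), every matrix entry of $\cP_{\Phi,R}$ is a noncommutative polynomial in $\cP$, $\Phi_\pm$, and $\psi_R$, each of operator norm at most $1$ (by Proposition \ref{prop:phi}(1) and Lemma \ref{lemma:cut-off}) or at most $\|\cP\|$. For (ii), the key observation is that subtracting $\begin{psmallmatrix}\cP&0\\0&0\end{psmallmatrix}$ cancels the unique $\cP$-summand in the top-left entry that lacks a right-most cut-off, so that every entry of $\cP_{\Phi,R}-\begin{psmallmatrix}\cP&0\\0&0\end{psmallmatrix}$ is flanked by $\psi_R$ on both sides. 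After reducing by density to the case $\cP\in C^*_{\textup{alg}}(P)$ with propagation at most $r_1$, each condition of Definition \ref{def:Aalgebra} can be checked in turn: finite propagation along $P$ from the $X$-diagonality of $\Phi$ and $\psi_R$ combined with the propagation of $\cP$; vanishing $\ell^q$-propagation as $s\to\infty$ from Proposition \ref{prop:phi}(7); containment of the $E$-support at $(x,y)$ in $B_q(f(y),r_2)$ with $r_2=R+\rho_+(r_1)+C_2$, provided by the right-most $\psi_R$; and compactness of each $\cP(x,y)$ on the $P$-fiber.

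The hard part is (iii). The algebraic formula $\cP_\Phi$ (without cut-offs) is engineered so that $\cP_\Phi^2=\cP_\Phi$ holds \emph{exactly} whenever $\cP^2=\cP$ and $\cP$ commutes with $\Phi_\pm$: setting $y=\Phi_+\Phi_-$, the scalar identity $y(2-y)+(1-y)^2=1$ is what forces the entries of $\cP_\Phi^2$ to collapse to those of $\cP_\Phi$. In our setting $\cP^2=\cP$ is given, but $\cP$ does not commute with $\Phi$; the defect at position $(x,y)$ is $\cP(x,y)(\Phi_{f(y)}-\Phi_{f(x)})$, with norm bounded by $\|\cP\|\cdot L_1\rho_+(r_1)$ but not otherwise small. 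The cut-off $\psi_R$ is precisely what converts this $O(1)$ noncommutativity into small operator-norm errors. Expanding $\cP_{\Phi,R}^2-\cP_{\Phi,R}$ and simplifying using $\cP^2=\cP$ together with the above scalar identity, every surviving term can be arranged to contain at least one of three small factors: (a) $[\psi_R,\cP]$, of norm at most $3R^{-1}\rho_+(r_1)\|\cP\|$ by Lemma \ref{lemma:cut-off}; (b) $[\psi_R,\Phi_{s+s_n,f(x)}]$, of norm at most $3C_1R^{-1}(s+s_n)^{-1}$ by Proposition \ref{prop:phi}(3); or (c) a residual of type $(1-\psi_{f(x),R})(\Phi_{f(y)}-\Phi_{f(x)})$ or $(1-\psi_{f(x),R})(\Phi_{f(x)}^2-1)$, of norm at most $3\varepsilon$ by Proposition \ref{prop:phi}(5)--(6), where $\varepsilon$ parameterizes the closeness of $\Phi$ to $x(1+x^2)^{-1/2}$. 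Choosing $\varepsilon>0$ sufficiently small in the construction of $\Phi$, and then $R>0$ sufficiently large (both depending on $\cP$ through $r_1$), makes the total error less than $1/4$.

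The main obstacle is the combinatorial bookkeeping required to confirm that every term appearing in the expansion of $\cP_{\Phi,R}^2-\cP_{\Phi,R}$ indeed contains one of the three small factors above. The difficulty is that the individual norms $\|\Phi_{f(x)}^2-1\|$ and $\|\Phi_{f(y)}-\Phi_{f(x)}\|$ are only $O(1)$, not small; every occurrence of these must be arranged to appear against a $(1-\psi_R)$-cut-off before norm estimation, so that Proposition \ref{prop:phi}(5)--(6) may replace a naive $O(1)$ bound. This careful algebraic manipulation, patterned on similar arguments in \cite{Yucoarseembed,willett2020higher}, is the nontrivial content of the proof.
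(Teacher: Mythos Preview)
Your treatment of (i) and (ii) matches the paper's argument. For (iii), however, you miss the paper's main simplification and build your strategy on a misconception. You assert that $\cP_\Phi^2=\cP_\Phi$ holds only when $\cP$ commutes with $\Phi_\pm$, and then propose to tame the $O(1)$ commutator $[\cP,\Phi]$ by arranging it against $(1-\psi_R)$ in a direct expansion of $\cP_{\Phi,R}^2-\cP_{\Phi,R}$. But formula~\eqref{eq:index} is the standard boundary-map idempotent: for \emph{any} $u,v$ in a unital ring the matrix with entries $1-(1-uv)^2$, $(2-uv)u(1-vu)$, $v(1-uv)$, $(1-vu)^2$ satisfies $e^2=e$ identically, with no commutativity hypothesis. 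Taking $u=\cP\Phi_+\cP$, $v=\cP\Phi_-\cP$ in the corner $\cP B(H_{P,E})\cP$ gives $\cP_\Phi^2=\cP_\Phi$ exactly. The paper uses this: since $\cP_\Phi$ is already an idempotent, it suffices to bound $\|\cP_{\Phi,R}-\cP_\Phi\|$. After passing to a finite-propagation $\cQ$, each entry of $\cQ_{\Phi,R}-\cQ_\Phi$ has the shape $\psi_R T\psi_R-T$ with $T$ a word in $\cQ(\Phi^2-1)$ and $\cQ[\cQ,\Phi]\Phi$; commuting $\psi_R$ through $\cQ$ and $\Phi$ produces your factors (a) and (b), and the remaining tails $(1-\psi_R)(\Phi^2-1)$ and $(1-\psi_R)[\cQ,\Phi]$ are handled by Proposition~\ref{prop:phi}(5)--(6), which is your factor (c).

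Your direct expansion is not wrong in principle, but as stated it has a gap. If $\cP_\Phi$ were only idempotent modulo $[\cP,\Phi]$-terms, then $\cP_{\Phi,R}^2-\cP_{\Phi,R}$ would contain such terms \emph{not} flanked by $(1-\psi_R)$: letting $R\to\infty$ one would recover the nonzero defect $\cP_\Phi^2-\cP_\Phi$, not zero, so no choice of $R$ or $\varepsilon$ could push the norm below $1/4$. The only reason every surviving term in your expansion actually carries one of your three small factors is that $\cP_\Phi^2-\cP_\Phi=0$ forces all non-small contributions to cancel \emph{algebraically} before any estimate is applied. Once you recognize this, your ``combinatorial bookkeeping'' collapses to the single comparison $\|\cP_{\Phi,R}-\cP_\Phi\|$, which is exactly the paper's route.
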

\begin{proof}
	The norm estimates for $\cP_{\Phi,R}$ and $\cP_{\Phi}$ are clear from construction and part (1) of Proposition \ref{prop:phi}.	
	Let $\cP\in C^*(P)$. 
	We first note that $\psi_R\cP(\Phi^2-1)\psi_R\in A(P,E)$ by parts (1), (2), (4) and (7) of Proposition \ref{prop:phi}, and $\psi_R[\cP,\Phi]\psi_R\in A(P,E)$ by parts  (2), (4) and (7) of Proposition \ref{prop:phi}. Therefore, we have
	$$\cP\Phi \cP\Phi-\cP=\cP(\Phi^2-1)-\cP[\cP,\Phi]\Phi\in A(P,E).$$
	Note that
	$$\cP\Phi \cP\Phi-\cP=\begin{pmatrix}
		\cP\Phi_-\cP\Phi_+-\cP&\\&\cP\Phi_+ \cP\Phi_--\cP
	\end{pmatrix},$$
	and every entry of $\cP_{\Phi,R}-\begin{psmallmatrix}
		\cP & 0 \\ 0 & 0 
	\end{psmallmatrix}$ is composed of operators that are either $\cP\Phi_- \cP\Phi_+-\cP$  or $\cP\Phi_+ P\Phi_--\cP$. It follows that  $\cP_{\Phi,R}-\begin{psmallmatrix}
	\cP & 0 \\ 0 & 0 
	\end{psmallmatrix}\in A(P,E)$.
	
	Since $C^*_{alg}(P)$ is dense in $C^*(P)$, for any $\delta>0$, there exists $\cQ\in C^*_{alg}(P)$ such that $\|\cQ\|=\|\cP\|$ and $\|\cP-\cQ\|<\delta$. In particular, we have 
	\begin{itemize}
		\item $\sup_{x,y\in X}|\cQ(x,y)|<\infty$,
		\item $\|\cQ^2-\cQ\|<\delta$.
		\item the propagation of $\cQ$ is finite, hence $\cQ(x,y)=0$ if $d(x,y)>r_1$ for some $r_1>0$,
	\end{itemize}
	To show that $\|(\cP_{\Phi,R})^2-\cP_{\Phi,R}\|< 1/4$, it suffices to estimate $\|\cP_{\Phi,R}-\cP_\Phi\|$, since $\cP_\Phi$ is an idempotent. In fact, since $\|\cQ^2-\cQ\|<\delta$, by choosing $\delta $ to be sufficiently small, it suffices to estimate $\|\cQ_{\Phi,R}-\cQ_\Phi\|$, where  
	\[  	\cQ_\Phi\coloneqq \begin{pmatrix}
	  \cQ-(\cQ-\cQ\Phi_+\cQ\Phi_-)^2&(2\cQ-\cQ\Phi_+\cQ\Phi_-)\cQ\Phi_+(\cQ-\cQ\Phi_-\cQ\Phi_+)\\ \cQ\Phi_-(\cQ-\cQ\Phi_+\cQ\Phi_-)&(\cQ-\cQ\Phi_-\cQ\Phi_+)^2
	\end{pmatrix} \]
	and $\cQ_{\Phi,R}$ is defined by the same formula  as $\cP_{\Phi, R}$ but with $\cP$ replaced by $\cQ$.

	 The matrix decomposition of $[\cQ,\psi_R]$ is given by
	$$\left([\cQ,\psi_R]\right)(x,y)=\cQ(x,y)(\psi_{f(x), R}-\psi_{f(y), R}).$$
	In particular, if  $\left([\cQ,\psi_R]\right)(x,y)\ne 0$, then 
	$$\|f(x)-f(y)\|_q\leq \rho_+(r_1)$$
	where $\rho_+$ is the function as given in Definition  \ref{def:coarseembed}. Therefore by Lemma \ref{lemma:cut-off}, we have
	$$\|\left([\cQ,\psi_R]\right)(x,y)\|\leq 3R^{-1}\rho_+(r_1).$$
	Therefore, we have
	$$[\cQ,\psi_R]\to 0\text{ as }R\to\infty.$$
	Furthermore, it follows from Lemma \ref{lemma:cut-off} and part (3) of Proposition \ref{prop:phi} that
	$$[\Phi,\psi_R]\to 0\text{ as }R\to\infty.$$
	Now to estimate $\| \cQ_{\Phi,R}-\cQ_\Phi\|$, we need to estimate the norms of terms such as 
	\[ (\cQ\Phi \cQ\Phi-\cQ)- \psi_R(\cQ\Phi \cQ\Phi-\cQ)\psi_R. \]
In the following, let us only give the details on how to estimate  $\|(\cQ\Phi \cQ\Phi-\cQ)- \psi_R(\cQ\Phi \cQ\Phi-\cQ)\psi_R\|$, since the other terms can be estimated in a completely similar way. Note that 
	\begin{align*}
	\cQ\Phi \cQ\Phi-\cQ &= - \cQ[\cQ, \Phi]\Phi - \cQ + \cQ^2\Phi^2 \\ & = - \cQ[\cQ, \Phi]\Phi + (\cQ^2- \cQ)\Phi^2 + \cQ(\Phi^2-1).
\end{align*}
It follows that 
	\begin{align*}
		&\|(\cQ\Phi \cQ\Phi-\cQ)-\psi_R(\cQ\Phi \cQ\Phi-\cQ)\psi_R\|\\
		\leq &\|\cQ(\Phi^2-1) -\psi_R\cQ(\Phi^2-1)\psi_R\| +\|\cQ[\cQ,\Phi]\Phi-\psi_R\cQ[\cQ,\Phi]\Phi\psi_R\|\\
		&\quad +\|(\cQ^2-\cQ)\Phi^2\|+\|\psi_R(\cQ^2-\cQ)\Phi^2\psi_R\|.
	\end{align*}
The last two terms of the right hand side  are both $<\delta$.  The first term of the right hand side satisfies 
\begin{align*}
	& \|\cQ(\Phi^2-1) -\psi_R\cQ(\Phi^2-1)\psi_R\| \\
	\leq  & \|\cQ(\Phi^2-1) - \cQ \psi_R (\Phi^2-1)\psi_R \|  + \|[\psi_R,\cQ] (\Phi^2-1)\psi_R\|  \\
	\leq & \|\cQ (1-\psi_R) (\Phi^2-1) \|  +  \|\cQ \psi_R (\Phi^2-1) (1-\psi_R) \| +  \|[\psi_R,\cQ] (\Phi^2-1)\psi_R\|
\end{align*}
It follows that part  (6) of Proposition \ref{prop:phi} that $\|\cQ(\Phi^2-1) -\psi_R\cQ(\Phi^2-1)\psi_R\|$ can be made arbitrarily small as long as  the number $\varepsilon$ from Proposition \ref{prop:phi} is chosen to be sufficiently small and $R$ is chosen to be sufficiently large. Similarly, the term $\|\cQ[\cQ,\Phi]\Phi-\psi_R\cQ[\cQ,\Phi]\Phi\psi_R\|$ satisfies 
\begin{align*}
	& \|\cQ[\cQ,\Phi]\Phi-\psi_R\cQ[\cQ,\Phi]\Phi\psi_R\| \\
	\leq & \|\cQ[\cQ,\Phi]\Phi-\cQ\psi_R[\cQ,\Phi]\psi_R \Phi \| + \|\cQ\psi_R[\cQ,\Phi][\Phi, \psi_R]\| \\
& 	\quad  + \|[\psi_R,\cQ] [\cQ,\Phi]\Phi\psi_R \|.
\end{align*}
It follows that   that it suffices for us to  estimate the norm
\[  \|[\cQ,\Phi] - \psi_R[\cQ,\Phi]\psi_R\| \] 
which in turn can be estimated as follows: 
\begin{align*}
	& \|[\cQ,\Phi] - \psi_R[\cQ,\Phi]\psi_R\| \\
	\leq & \|(1-\psi_R)[\cQ,\Phi]\|  + \| \psi_R[\cQ,\Phi](1-\psi_R) \|.
\end{align*}
Note that the matrix decomposition of  $(1-\psi_R)[\cQ,\Phi]$ is given by 
$$\left( (1-\psi_R)[\cQ,\Phi]\right) (x, y)=(1-\psi_{f(x), R})(\Phi_{f(x)}-\Phi_{f(y)}) \cQ(x,y),$$
Since $\cQ$ has finite propagation, it follows from part   (5)  of Proposition \ref{prop:phi} that  $\|(1-\psi_R)[\cQ,\Phi]\|$ can be made arbitrarily small as long as  the number $\varepsilon$ from Proposition \ref{prop:phi} is chosen to be sufficiently small and $R$ is chosen to be sufficiently large. The same estimate also applies to $\| \psi_R[\cQ,\Phi](1-\psi_R) \|$. 

To summarize, we have shown that  the norm  $\| \cQ_{\Phi,R}-\cQ_\Phi\|$ can be made arbitrarily small, as long as  the number $\varepsilon$ from Proposition \ref{prop:phi} and the number $\delta$ are  chosen to be sufficiently small and $R$ is chosen to be sufficiently large. Therefore,  as long as  the numbers $\varepsilon,\delta,R$ are chosen, $\|(\cP_{\Phi,R})^2-\cP_{\Phi,R}\|$ can be made arbitrarily small, thus  in particular less than $1/4$.  This finishes the proof. 
\end{proof}

\begin{proposition}\label{prop:composition}
	The composition
	$$\imath_*\circ\ind\colon K_*(C^*(P))\to K_*(A(P,E))\to K_*(C^*(P,E))$$
	is equal to the isomorphism $\id_*\colon K_*(C^*(P))\to  K_*(C^*(P,E))$ induced by the tensor product with a rank-one projection.
\end{proposition}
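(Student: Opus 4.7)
The plan is to show that $\imath_\ast(\ind([\cP]))$ in $K_0(C^\ast(P,E))$ coincides with $\id_\ast([\cP])$ by reducing the former to a manifestly product-shaped class via two successive homotopies. Both homotopies are unavailable inside $A(P,E)$ but become available once we are in $C^\ast(P,E)$, since the latter imposes no control on the $E$-support.

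First, I would remove the cut-offs. By repeating the estimates in the proof of Proposition \ref{prop:indMap} with $\psi_R$ replaced by a family $\psi_{R(t)}$ along which $R(t) \to \infty$, parts (5) and (6) of Proposition \ref{prop:phi} show that $t \mapsto \cP_{\Phi,R(t)}$ is a norm-continuous path of approximate idempotents in $C^\ast(P,E)$. This connects $\imath(\cP_{\Phi,R})$ to the ``uncut'' idempotent $\cP_\Phi$, defined by formula \eqref{eq:index} with every $\psi_R$ erased.

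Next, I would drag the base points to the origin. Define $\Phi^t$ by replacing each $\Phi_{s+s_n,f(x)}$ with $\Phi_{s+s_n,(1-t)f(x)}$ for $t \in [0,1]$; Lemma \ref{lemma:difference} makes $t \mapsto \Phi^t$ norm-continuous fiberwise, and the finite propagation of $\cP$ along $P$ upgrades this to a norm-continuous path of approximate idempotents $\cP_{\Phi^t}$ in $C^\ast(P,E)$. At $t=1$ every Bott--Dirac has base point $0 \in E_n$, so blockwise $\Phi^1 = \id_{H_{P_n}} \otimes \Phi_{1+s_n,0}$, which in particular commutes with $\cP \otimes 1$. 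With this commutation, \eqref{eq:index} factors as $\cP_{\Phi^1} = \cP \cdot q_0$, where $q_0$ is the $2 \times 2$ index idempotent produced by \eqref{eq:index} with $\cP$ replaced by $1$. Since $q_0$ acts purely on the $H_E$ factor and $\bigoplus_n \Phi_{1+s_n,0}$ is a genuine Fredholm operator of index $+1$ with one-dimensional kernel by Lemma \ref{lemma:Fredholm}, the class $[q_0] - \left[\begin{psmallmatrix} 1 & 0 \\ 0 & 0 \end{psmallmatrix}\right]$ equals $[e_0]$ for a rank-one projection $e_0$ on $H_E$. Hence
\[
\imath_\ast(\ind([\cP])) = [\cP q_0] - \left[\cP \cdot \begin{psmallmatrix} 1 & 0 \\ 0 & 0 \end{psmallmatrix}\right] = [\cP \otimes e_0] = \id_\ast([\cP]).
\]

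The main obstacle is technical rather than conceptual: at each stage one must verify that the intermediate objects $\cP_{\Phi,R(t)}$ and $\cP_{\Phi^t}$ remain approximate idempotents in $C^\ast(P,E)$ with $\|\cdot^2 - \cdot\|$ uniformly bounded below $1/2$, so that the Riesz projection supplies an honest norm-continuous path of genuine projections representing the $K$-class. This requires rerunning the chain of estimates in the proof of Proposition \ref{prop:indMap} with uniform choices of $\varepsilon$, $\delta$, and $R$ along the homotopy parameter, which is routine given the quantitative content of Proposition \ref{prop:phi}.
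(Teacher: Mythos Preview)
Your Step~2 contains a genuine gap. The linear homotopy of base points $(1-t)f(x)$ does \emph{not} produce a norm-continuous path $t\mapsto\cP_{\Phi^t}$ in $C^\ast(P,E)$, because $\sup_{x\in X}\|f(x)\|_q=\infty$. Indeed, by part~(4) of Proposition~\ref{prop:phi} the diagonal block of $\Phi^{t_1}-\Phi^{t_2}$ at $x$ has norm at most $L_1|t_1-t_2|\,\|f(x)\|_q$, and this estimate is sharp enough that the operator norm of $\Phi^{t_1}-\Phi^{t_2}$ blows up; the same happens for the entries of $\cP_{\Phi^{t_1}}-\cP_{\Phi^{t_2}}$. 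Your sentence ``the finite propagation of $\cP$ along $P$ upgrades this to a norm-continuous path'' conflates two different quantities: finite propagation of $\cP$ controls $\|f(x)-f(y)\|_q$ for $(x,y)\in\supp(\cP)$, which is what makes $[\cP,\Phi^t]$ bounded, but it says nothing about $\|f(x)\|_q$ itself, which is what governs continuity in $t$. Concretely, $X$ is a coarse disjoint union $\bigsqcup X_n$ with diameters tending to infinity, so even after centering each $f(X_n)$ at the origin of $E_n$ one has $\sup_x\|f(x)\|_q=\infty$.

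This is precisely why the paper does \emph{not} use a direct homotopy. Instead it moves the base points toward the origin in unit steps via iterates $\kappa^m$ of a $1$-Lipschitz contraction, so that each individual step $\kappa^m(f(x))\to\kappa^{m+1}(f(x))$ is uniformly bounded over $x$ and hence gives a genuine norm-continuous homotopy $\cQ_{\Phi^m_1}\sim\cQ_{\Phi^{m+1}_1}$. Since the number of steps needed to reach the origin is finite for each $x$ but unbounded over $X$, one cannot simply pass to the limit; the paper packages all the $\cQ_{\Phi^m_1}$ into an infinite diagonal and runs an Eilenberg swindle to conclude $[\cQ_{\Phi^0_1}]=[\cQ_{\Phi^\infty_1}]$. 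The local compactness of the difference of the two infinite diagonals (needed for the swindle to live in $C^\ast(P,E)$) uses exactly that for each fixed $(x,y)$ only finitely many blocks differ. Your endgame from the point where the base points are all at the origin is correct and matches the paper; what is missing is a legitimate way to get there.
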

\begin{proof}
Define a map $\kappa\colon (E, \|\cdot\|_q) \to (E, \|\cdot\|_q)$ by
	$$\kappa(x)=\begin{cases}
		\frac{x}{\|x\|_q}(\|x\|_q-1)& \textup{ if } \|x\|_q\geq 1,\\
		0 &\textup{ if } \|x\|_q<1.
	\end{cases}$$
Clearly $\kappa$ is $1$-Lipschitz and $\kappa^m(x)\coloneqq \kappa(m^{-1}x) = 0$ if $m\geq\|x\|_q$. Furthermore,  we set  $\kappa^0(x) = x$ when $m=0$,  and similarly $\kappa^\infty(x) \equiv 0$ when $m=\infty$. 
Recall that $\Phi$ is the path of operators 
$\Phi_s\colon H_{P,E}\to H_{P,E}$
given by 
$$\Phi_s(\xi)=\Phi_{s+s_n,f(x)}(\xi)\text{ for all }\xi\in (\chi_{B_x}\otimes 1)H_{P,E}$$
for $s\in[1,+\infty)$, where $f\colon X\to \ell^q$ is the given coarse embedding and $s_n$ is the constant given in line \eqref{eq:constant}.
We define $\Phi^m_1\colon H_{P,E}\to H_{P,E}$ by
$$\Phi^m_1(\xi)=\Phi_{1+s_n,\kappa^m(f(x))}(\xi)\text{ for all }\xi\in (\chi_{B_x}\otimes 1)H_{P,E}.$$
Note that,  when $m=\infty$, $\kappa^m(f(x))$ equals the origin for all $x\in X$. In particular, the base point for the operator $\Phi_{1+s_n,\kappa^\infty(f(x))}$ is the origin of $E_n$ for all $x\in X_n$. 

Consider $[\cP]\in K_0(C^*(P))$ where $\cP\in C^*(P)$ and $\cP^2=\cP$. Similar to the proof of Proposition \ref{prop:indMap}, 
for  any $\delta>0$, let $\cQ\in C^*_{alg}(P)$ be an almost idempotent  such that $\|\cQ\|=\|\cP\|$ and $\|\cP-\cQ\|<\delta$.
 Define  $\cQ_{\Phi^m_1}$ similar to the formula in line \eqref{eq:index}. By the proof of Proposition \ref{prop:indMap}, we have
$$\cQ_{\Phi^m_1}- \begin{psmallmatrix}
	\cQ & 0 \\ 0 & 0 
\end{psmallmatrix}\in A(P,E),\text{ and }\|(\cQ_{\Phi^m_1})^2-\cQ_{\Phi^m_1}\|<1/4,$$
for all $m\in \mathbb N\cup \{\infty\}$. Here we remark that we have used the same function $\Phi$ (as in Proposition \ref{prop:phi}) to perform the functional calculus for all  $m\in \mathbb N\cup \{\infty\}$.

Now we define 
$$a\coloneqq \left[\begin{pmatrix}
	\cQ_{\Phi^0_1}&&&\\
	&\cQ_{\Phi^1_1}&&\\
	&&\cQ_{\Phi^2_1}&\\
	&&&\ddots
\end{pmatrix}\right]-\left[\begin{pmatrix}
\cQ_{\Phi^\infty_1}&&&\\
&\cQ_{\Phi^\infty_1}&&\\
&&\cQ_{\Phi^\infty_1}&\\
&&&\ddots
\end{pmatrix}\right]$$
and 
$$b\coloneqq [\cQ_{\Phi^0_1}]-[\cQ_{\Phi^\infty_1}].$$
Both $a$ and $b$ are formal differences of almost idempotents as long as  $\delta$ is sufficiently small. It is clear that $b$ defines a class in $ K_0(C^*(P,E))$. As for $a$, we observe that,  for any fixed  $y\in X$,  
\[ \cQ_{\Phi^m_1}(x,y)=\cQ_{\Phi^\infty_1}(x,y) \textup { for $\forall x\in X$ and all but finitely many $m$'s,} \] 
since $\cQ$ has finite propagation.  Thus for any $(x,y)$, the difference
$$\begin{pmatrix}
	\cQ_{\Phi^0_1}&&&\\
	&\cQ_{\Phi^1_1}&&\\
	&&\cQ_{\Phi^2_1}&\\
	&&&\ddots
\end{pmatrix}(x,y)-\begin{pmatrix}
	\cQ_{\Phi^\infty_1}&&&\\
	&\cQ_{\Phi^\infty_1}&&\\
	&&\cQ_{\Phi^\infty_1}&\\
	&&&\ddots
\end{pmatrix}(x,y)$$
is a finite direct sum of compact operators, hence still compact.

 Therefore, by identifying $H_E$ with a direct sum of countably many copies of $H_E$, we see that $a$ defines a class in $K_0(C^*(P,E))$. Now consider the obvious homotopy of almost idempotents that connects $\cQ_{\Phi^m_1}$ to $\cQ_{\Phi^{m+1}_1}$. Together they produce a homotopy of almost idempotents between $a+b$ and $a$, which implies that  $b=0$. In other words, 
 \[[\cQ_{\Phi^0_1}]=[\cQ_{\Phi^\infty_1}].   \]
 Since the base point for the operator $\Phi_{1+s_n,\kappa^\infty(f(x))}$ is the origin of $E_n$ for all $x\in X_n$, it follows that  $\Phi^\infty_1$ commutes $Q$. Therefore, the formula for $\cQ_{\Phi^\infty_1}$ becomes 
 \[ \cQ_{\Phi^\infty_1} = \cQ\otimes \ind (\Phi^\infty_1)\]
 whose $K$-theory class is equal to that of $\cQ$ tensor with a rank-one projection, according to  Lemma \ref{lemma:Fredholm}. To summarize, we have shown  that the composition $i_*\circ\ind$ coincides with the map induced by the tensor product on each $X_n$ by a rank-one projection, which is clearly the identity map at the level of $K$-theory.
\end{proof}
\subsection{Coarse Baum--Connes conjecture for twisted algebras}\label{sec:cbcTwist}
The work in the previous subsection allows us to reduce our proof of the coarse Baum--Connnes conjecture (cf. Conjecture \ref{conj:cBC}) for spaces that are coarsely embeddable into $\ell^q$ (with $q\in [1, \infty)$) to the following theorem, which should be viewed as a coarse Baum--Connnes conjecture with certain coefficients. 
\begin{proposition}\label{prop:cbcTwisted}
	The map 
	$$\ev_*\colon \lim_{r\to\infty}K_*(A_L(P_r(X),E))\to K_*(A(P_r(X),E))$$
	is an isomorphism.
\end{proposition}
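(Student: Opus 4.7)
The plan is to prove that the evaluation map is an isomorphism by a Mayer--Vietoris induction carried out with respect to the $\ell^q$-metric on the fiber Euclidean spaces $E_n$, exploiting the two key structural features of $A(P_r(X),E)$ from Definition \ref{def:Aalgebra}: the $E$-support of $T_s(x,y)$ sits inside an $\ell^q$-ball of a fixed radius $r_2$ around $f(y)$, and the $\ell^q$-propagation in $E$ tends to zero as $s\to\infty$. The latter is the twisted analog of the condition that the propagation in the localization algebra tends to zero, and it is what makes a Mayer--Vietoris decomposition work in the $E$-direction.

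First, I would set up an ideal decomposition of the twisted algebras associated to $\ell^q$-open covers of each $E_n$. Given a block-wise $\ell^q$-open cover $E_n=U_n\cup V_n$ whose pieces are separated on suitable ``core'' regions, one defines ideals $A(P_r(X),E;U)$, $A(P_r(X),E;V)$, and $A(P_r(X),E;U\cap V)$ by requiring that the $E$-support of $T_s(x,y)$ lie in the appropriate region in each block. Using cut-off functions as in Lemma \ref{lemma:cut-off} and the fact that $\ell^q$-propagation of $T_s$ tends to zero, any element of $A(P_r(X),E)$ decomposes modulo lower-order (norm-vanishing as $s\to\infty$) terms as a sum of an element in $A(P_r(X),E;U)$ and one in $A(P_r(X),E;V)$. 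This produces a six-term Mayer--Vietoris exact sequence in $K$-theory at the $A$-level and, because the $L$-parameter $t$ only controls propagation in $P$, a parallel sequence at the $A_L$-level, intertwined by the evaluation map.

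Next, I would iterate this Mayer--Vietoris reduction. Since each $E_n$ is finite-dimensional, one can repeatedly cut each $E_n$ along codimension-one $\ell^q$-slabs, eventually reducing to the case in which each $T_s(x,y)$ is $E$-supported in an arbitrarily small $\ell^q$-ball around $f(y)$ in $E_n$. In this ``zero-dimensional'' limit, the twisted algebra collapses to an ampliation of the ordinary Roe algebra $C^*(P_r(X))$ (the $E$-variable being trivially carried along), and the corresponding $A_L$ collapses to an ampliation of the localization algebra $C^*_L(P_r(X))$. The evaluation map then identifies with the classical one, so the isomorphism in the base case follows from the standard result of Yu \cite{Yulocalization} that $\ev_*\colon \varinjlim_r K_*(C^*_L(P_r(X))) \to K_*(C^*(X))$ is induced by the canonical iso $K_*(C^*(X)) \cong K_*(C^*(P_r(X)))$ for each $r$. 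Applying the five-lemma inductively along the chain of Mayer--Vietoris sequences yields the desired iso in the general case.

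The main obstacle will be ensuring that the Mayer--Vietoris reduction is carried out \emph{uniformly} in $n$: the dimensions $\dim E_n$ are not bounded, so the number of cuts needed to reach the base case differs from block to block, and one must certify that all intermediate ideals satisfy the uniform support and propagation conditions of Definition \ref{def:Aalgebra}. The controlled Lipschitz estimates of Lemma \ref{lemma:cut-off} and Proposition \ref{prop:phi}, together with the comparison $\|\cdot\|_q\leq\|\cdot\|_2$ from Lemma \ref{lemma:q-norm<=2-norm}, are what make this simultaneous bookkeeping possible. One also needs the MV decompositions in $E$ to be compatible with passing to the direct limit $r\to\infty$ on the $A_L$-side, but this is automatic because the $E$-cutting does not interact with the propagation in $P_r(X)$.
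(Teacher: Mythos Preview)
Your proposal has a genuine gap at the base case, and the reduction does not land where you think it does. After cutting the $E$-support down to tiny $\ell^q$-balls around $f(y)$, you claim the twisted algebra ``collapses to an ampliation of the ordinary Roe algebra $C^*(P_r(X))$'' and that the evaluation map then becomes the classical one, which you say is an isomorphism by \cite{Yulocalization}. But the classical evaluation map $\varinjlim_r K_*(C^*_L(P_r(X)))\to K_*(C^*(X))$ being an isomorphism is precisely the coarse Baum--Connes conjecture for $X$, which is the theorem this proposition is meant to prove. Yu's paper \cite{Yulocalization} identifies $K_*(C^*_L(P))$ with $K$-homology; it does not show that the evaluation map to the Roe algebra is an isomorphism in general. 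So your base case is circular.

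The paper's argument avoids this by using the coarse embedding $f$ in the reverse direction. Rather than cutting each $E_n$ down to small balls (which, as you note, cannot be done in a bounded number of steps since $\dim E_n$ is unbounded), the paper passes to ideals $A(P,E)^R_U$ where the $E$-support is constrained to lie near a single \emph{bounded} subset $U\subset\ell^q$. The point is that if $T_s(x,y)\neq 0$ then $f(x)$ or $f(y)$ is near $U$, and since $f$ is a coarse embedding and $T$ has finite $P$-propagation, both $x$ and $y$ must lie in a fixed bounded subset $Y\subset X$. For $r$ large, $Y$ sits in a single simplex of $P_r(X)$, so the problem reduces to the case $X=\{\star\}$, where the evaluation map is trivially an isomorphism. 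One then covers the relevant region of $\ell^q$ by $\{B_q(f(x),R+1)\}_{x\in X}$, which has \emph{finite multiplicity} by bounded geometry of $X$ (Lemma \ref{lemma:bddGeometry}), and a Mayer--Vietoris argument on this cover---not on slabs in $E_n$---handles the passage from bounded $U$ to the full algebra. The finite-multiplicity bound is uniform in $n$, so the issue you flagged about unbounded $\dim E_n$ never arises.
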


We shall prove the above proposition in several steps. First let us  prove the following elementary lemma. 
\begin{lemma}\label{lemma:bddGeometry}
	Let $X$ be a locally finite metric space with bounded geometry, and $f\colon X\to \ell^q$ a coarse embedding. Then for any $R>0$, the family of balls $\{B_q(f(x),R)\}_{x\in X}$ in $\ell^q$ has finite multiplicity.
\end{lemma}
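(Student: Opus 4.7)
The plan is to show that finite multiplicity follows directly from combining the coarse embedding inequality with bounded geometry of $X$. Specifically, I want to produce a uniform bound $N$ such that for every $y\in \ell^q$,
\[ \#\{x\in X : y\in B_q(f(x),R)\} \leq N. \]

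First I would convert the condition $y\in B_q(f(x_1),R) \cap B_q(f(x_2),R)$ into a statement about $f(x_1)$ and $f(x_2)$: by the triangle inequality, this forces $\|f(x_1)-f(x_2)\|_q \leq 2R$. Next, I would use the lower bound in the coarse embedding condition (Definition~\ref{def:coarseembed}), namely $\rho_-(d(x_1,x_2)) \leq \|f(x_1)-f(x_2)\|_q$, together with the property $\rho_-(t)\to \infty$ as $t\to \infty$, to define
\[ S \coloneqq \sup\{t \geq 0 : \rho_-(t) \leq 2R\}, \]
which is finite. This shows that any two points $x_1,x_2\in X$ whose balls $B_q(f(x_i),R)$ share a common point must satisfy $d(x_1,x_2)\leq S$.

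Then I would invoke bounded geometry: there exists $N = N(S) >0$ such that every closed $S$-ball in $X$ contains at most $N$ points. Consequently, if $y$ lies in $B_q(f(x_i),R)$ for $x_1,\dots,x_k \in X$, then all $x_i$ lie in the $S$-ball around $x_1$ in $X$, so $k \leq N$. This gives the required uniform multiplicity bound.

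The argument is essentially a one-step translation between the metric on $X$ and the metric on $\ell^q$ via the coarse embedding, followed by bounded geometry; there is no real obstacle. The only subtlety is ensuring that $S$ is finite, which is precisely the content of $\rho_-(t)\to\infty$, and ensuring that the constant $N$ depends only on $R$ (and the embedding data), not on the point $y$, which is automatic from the construction.
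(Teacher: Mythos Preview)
Your proposal is correct and follows essentially the same argument as the paper: use the triangle inequality to bound $\|f(x_i)-f(x_j)\|_q\le 2R$, invert the coarse embedding lower bound $\rho_-$ to get a uniform diameter bound on $\{x_i\}$ in $X$, and then apply bounded geometry. The only cosmetic difference is that you explicitly write $S=\sup\{t:\rho_-(t)\le 2R\}$ whereas the paper simply asserts the existence of such an $R'$.
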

\begin{proof}
	Assume that $B_q(f(x_1),R),B_q(f(x_2),R),\cdots,B_q(f(x_n),R)$ have non-trivial intersection, that is,
	$$v\in \bigcap_{i=1}^nB_q(f(x_i),R).$$
	It follows that
	$$\|f(x_i)-f(x_j)\|\leq \|f(x_i)-v\|+\|f(x_j)-v\|\leq 2R,~\forall i,j\leq n.$$
	Since the map $f$ is a coarse embedding, there exists $R'>0$ such that
	$$d_X(x_i,x_j)\leq R',~\forall i,j\leq n,$$
	that is, the set $\{x_1,\ldots,x_n\}$ has diameter no more than $R'$. Since $X$ has bounded geometry, there exists $N_{R'}>0$ such that  any set in $X$ with diameter $\leq R'$ has cardinality $\leq N_{R'}$. Therefore $n\leq N_{R'}$, that is, the family  $\{B_q(f(x),R)\}_{x\in X}$ has multiplicity no more than $N_{R'}$. This finishes the proof. 
\end{proof}

Now we construct some ideals of the twisted algebras that restricts elements to certain subsets. We shall analyze the assembly map locally on these ideas.
\begin{definition}\label{def:AalgebraFinite}
	Given the Rips complex $P$, for any $R>0$, we define $A_{alg}(P,E)^R$ as a subalgebra of $A_{alg}(P,E)$ consisting of  paths of operators $T_s$ such that for any $\varepsilon>0$, there exists $s_\varepsilon>0$ such that  the $E$-support of $T_s(x,y)$ is contained in the $(R+\varepsilon)$-neighborhood of $\{f(x),f(y)\}$ (with respect to the $\ell^q$-metric) for all $x,y\in X$ and all $s>s_\varepsilon$.
Denote by $A(P,E)^R$ the completion of $A_{alg}(P,E)^R$ in $A^p(P,E)$.
\end{definition}
For example, the operator $\mathcal Q_\Phi$ constructed in the proof of Proposition \ref{prop:indMap} lies in $A_{alg}(P,E)^R$ for some $R>0$.
\begin{definition}\label{def:AalgebraLocal}
	Let $U$ be a subset of $\ell^q$. We define $A_{alg}(P,E)_U^R$ as a subalgebra of $A_{alg}(P,E)^R$ consisting of paths of  operators $T_s$ such that for any $\varepsilon>0$ there exists $s_\varepsilon>0$ such that the $E$-support of $T_s(x,y)$ is contained in the $\varepsilon$-neighborhood of $U$ (with respect to the $\ell^q$-metric) for all  $x,y\in X$  and all $s>s_\varepsilon$. We denote by  $A(P,E)_U^R$  the completion of $A_{alg}(P,E)_U^R$ in $A(P,E)^R$.
\end{definition}

Recall that by definition, $A_{alg}(P,E)$ consists of paths of operators whose $\ell^q$-propagation along $E$ goes to zero. As a result, $A_{alg}(P,E)^R$ and $A_{alg}(P,E)_U^R$ are closed two-sided ideals of $A_{alg}(P,E)$ for any $R>0$ and $U\subset\ell^q$. Furthermore, we have
$$A_{alg}(P,E)=\lim_{R\to\infty}A_{alg}(P,E)^R.$$

As the first step for the proof of Proposition \ref{prop:cbcTwisted}, we prove that if $X$ is coarsely embeddable into $\ell^q$, then   the coarse Baum--Connes map for $A(P,E)_U^R$ is  an isomorphism for each fixed $R>0$ and each  bounded set $U$. More precisely, we have the following lemma. 
\begin{lemma}\label{lemma:cbcBounded}
	If $U\subset \ell^p$ is a bounded set, then 
		$$\ev_*\colon \lim_{r\to\infty}K_*(A_L(P_r(X),E)^R_U)\to K_*(A(P_1(X),E)^R_U)$$
	is an isomorphism for any $R>0$.
\end{lemma}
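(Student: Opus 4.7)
My plan is to use the boundedness of $U$ to localize the problem to a finite piece of $X$, after which the statement reduces to a standard $K$-theoretic fact about contractible simplicial complexes.

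First, I would use the boundedness of $U$ to show that $A_{alg}(P_r(X),E)^R_U$ is essentially supported on a finite subset of $X$. By Definitions \ref{def:AalgebraFinite} and \ref{def:AalgebraLocal}, for any $T=(T_s)_{s\ge 1}\in A_{alg}(P_r(X),E)^R_U$ and fixed $\varepsilon\le 1$, there is $s_\varepsilon>0$ such that for $s\ge s_\varepsilon$, any nonzero matrix entry $T_s(x,y)$ has $E$-support meeting both the $1$-neighborhood of $U$ and the $(R+1)$-neighborhood of $\{f(x),f(y)\}$. Since $U$ is bounded in $\ell^q$, this forces $f(x)$ and $f(y)$ to lie in some fixed bounded subset $B\subset\ell^q$. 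By Lemma \ref{lemma:bddGeometry}, together with the bounded-geometry and coarse-embedding hypotheses, the set $F:=\{x\in X: f(x)\in B\}$ is finite. Combined with the uniformly finite propagation along $P_r(X)$ (item (2) of Definition \ref{def:Aalgebra}), the set of pairs $(x,y)$ with $T_s(x,y)\ne 0$ for some $s\ge s_\varepsilon$ is contained in a fixed finite set $F'\times F'\subset X\times X$. Since $X=\bigsqcup X_n$ is a coarse disjoint union, $F'$ meets only finitely many $X_n$'s.

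Second, after passing to these finitely many $X_n$'s and using a direct-sum decomposition along the coarse-disjoint-union structure, I would reduce to the case of a single finite piece $X_n$. For such $X_n$, the Rips complex $P_r(X_n)$ is a finite simplex, hence contractible, once $r\ge\operatorname{diam}(X_n)$. Since $E_n$ is finite-dimensional and $U$ is bounded, the $E_n$-support of every nonzero entry $T_s(x,y)$ is contained in a fixed bounded region $\Omega\subset E_n$. Consequently, $A(P_r(X_n),E)^R_U$ is, up to stabilization by matrix algebras indexed by $X_n\times X_n$, isomorphic to an algebra of uniformly continuous paths of compact operators on $L^2(\Omega,\Bigwedge^*E_n)$, and $A_L(P_r(X_n),E)^R_U$ becomes the corresponding localization-type algebra.

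Third, the conclusion then follows from the standard fact that, for a contractible finite-dimensional simplicial complex $P$, the evaluation map from the localization algebra to the Roe algebra (with any stable coefficients) is a $K$-theory isomorphism. This is the core content of the Mayer--Vietoris/contractibility input of Yu's original theorem in \cite{Yulocalization} (see also \cite[Section 6]{willett2020higher}). Passing to the direct limit $r\to\infty$ on the localization side is automatic here, since once $r\ge\max_j\operatorname{diam}(X_{n_j})$, the $K$-groups on both sides stabilize.

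The main obstacle I anticipate is maintaining uniform control over the $s$-parameter. The norm on $A(P,E)$ is $\sup_{s\ge 1}\|T_s\|$, and the support conditions in Definitions \ref{def:AalgebraFinite} and \ref{def:AalgebraLocal} hold only ``eventually in $s$,'' so I need to argue that norm-limits of elements in $A_{alg}(P,E)^R_U$ can be approximated by elements for which the above finite-support reduction is valid for all $s\ge 1$. I expect this to be handled by multiplying by a smooth cutoff of the type in Lemma \ref{lemma:cut-off}, supported in a large neighborhood of $U$, using the norm continuity and uniform boundedness from Definition \ref{def:Aalgebra} to bound the error. The same argument must accommodate the extra $t$-parameter of $A_L$, but the $t$- and $s$-parameters enter essentially independently in the definitions.
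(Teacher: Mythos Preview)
Your proposal is correct and follows essentially the same route as the paper: localize via the support conditions to a bounded (hence finite) subset $Y\subset X$, pass to a Rips scale $r$ large enough that $Y$ spans a single simplex, and invoke the standard point-case isomorphism.

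Two minor remarks. First, your anticipated obstacle with the $s$-parameter is real but is resolved more simply than by cutoff functions: after approximating a given idempotent by an algebraic element $\cQ=(\cQ_s)_{s\ge 1}$, one may simply \emph{start the path at $s=s_\varepsilon$ and reparameterize}, so that the support conditions from Definitions~\ref{def:AalgebraFinite} and~\ref{def:AalgebraLocal} hold for all $s\ge 1$; this does not change the $K$-class. Second, Lemma~\ref{lemma:bddGeometry} is not the right citation for the finiteness of $F=\{x\in X:f(x)\in B\}$ (that lemma concerns multiplicity of ball covers); the finiteness follows directly from the coarse embedding (bounded preimages are bounded) together with local finiteness of $X$.
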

\begin{proof}
	Let  $\cP\in A(P_1(X),E)_U^R$  be an idempotent. For  any $\delta>0$, let $\cQ = (\cQ_s)_{s\geq 1}\in A_{alg}(P_1(X),E)_U^R$ be an almost idempotent, namely $\|\cQ\|=\|\cP\|$,  $\|\cP-\cQ\|<\delta<1/10$, and $\cQ_s$ has uniform finite propagation along $P_1(X)$ for all $s\geq 1$. 
	
	According to Definition \ref{def:AalgebraFinite}, we may start  the path $(\cQ_s)_{s\geq 1}$ at $s=s_\varepsilon$ and reparameterize the path so that the $E$-support of $\cQ_s(x,y)$ is contained in $B_q(f(y),R+1)\cup B_q(f(x),R+1)$
	for all $x,y\in X$ and $s\geq 1$. 
   Similarly, according to  Definition \ref{def:AalgebraLocal}, we may assume furthermore that the $E$-support of $\cQ_s(x,y)$ is also contained in $N_q(U,1)$, where $N_q(U,1)$ is the $1$-neighborhood of $U$ with respect to the $\ell^q$-metric. As a result, the $E$-support of $\cQ_s(x,y)$ is contained in
	$$N_q(\{f(x),f(y)\},R+1)\cap N_q(U,1).$$
	Therefore, $\cQ_s(x,y)$ is non-zero only if either  $f(x)$ or $f(y)$ is in $N_{q}(U,R+2)$. 
	As $\cQ$ has finite propagation along $P_1(X)$, there eixsts $r_1>0$ such that $\cQ_s(x,y)=0$ for any $s\geq 1$ and $x,y\in X$ with $d(x,y)\geq r_1$. Therefore, since $f$ is a coarse embedding, $\cQ_s(x,y)$ is non-zero for some $s\geq 1$ only if both $x$ and $y$ lie in a fixed bounded subset of $X$, which will be denoted by $Y$.
	
	 By construction, the set $Y$ lies in a single simplex of the Rips complex $P_r(X)$ as long as $r$ is sufficiently large. This reduces the proof to the case when  $X$ is a single point $\{\star\}$, and  it is clear that   
	 $$\ev_*\colon \lim_{r\to\infty}K_*(A_L(P_r(\{\star\}),E)^R_U)\to K_*(A(P_1(\{\star\}),E)^R_U)$$
	 is an isomorphism in this case. See for example  \cite[Theorem 5.1.15 \& Theorem 6.4.16]{willett2020higher}. This finishes the proof. 
\end{proof}
\begin{corollary}\label{coro:cbcBounded}
		If $R>0$ and $U=\bigsqcup_{i\geq 1} U_i$, where the $U_i$'s are uniformly bounded and $\delta$-disjoint from each other for some $\delta>0$, then 
$$\ev_*\colon \lim_{r\to\infty}K_*(A_L(P_r(X),E)^R_U)\to K_*(A(P_1(X),E)^R_U)$$
	is an isomorphism.
\end{corollary}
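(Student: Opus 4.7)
The plan is to promote Lemma~\ref{lemma:cbcBounded} from a single bounded set to a $\delta$-disjoint union by decomposing the algebra $A(P_r(X),E)^R_U$ as an asymptotic $\ell^\infty$-direct sum over the pieces $U_i$, and then invoking the lemma uniformly in $i$.

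First I would construct the decomposition. Since the $U_i$'s are $\delta$-disjoint in $\ell^q$, for any $\varepsilon<\delta/3$ one can find a partition of unity $\{\psi_i\}$ on $\ell^q$ subordinate to the enlarged neighborhoods $N_q(U_i,\varepsilon)$, with $\ell^q$-Lipschitz constants bounded by a uniform constant depending only on $\delta$ (along the lines of Lemma~\ref{lemma:cut-off}). For $(T_s)\in A_{alg}(P_r(X),E)^R_U$, condition~(3) of Definition~\ref{def:Aalgebra} forces the $\ell^q$-propagation of $T_s(x,y)$ along $E$ to tend to zero uniformly as $s\to\infty$, so the off-diagonal terms $\psi_iT_s\psi_j$ with $i\neq j$ tend to zero in norm. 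Thus, up to a norm-vanishing perturbation, $T_s=\sum_i \psi_iT_s\psi_i$, and this block-wise decomposition identifies $A(P_r(X),E)^R_U$ (resp.\ $A_L(P_r(X),E)^R_U$) with the $\ell^\infty$-direct product of the pieces $A(P_r(X),E)^R_{U_i}$ (resp.\ their $A_L$ analogues).

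Next I would apply Lemma~\ref{lemma:cbcBounded} uniformly in $i$. Its proof shows that operators in $A(P_r(X),E)^R_{U_i}$ are effectively supported on the bounded subset $Y_i=\{x\in X : f(x)\in N_q(U_i,R+2)\}$; by Lemma~\ref{lemma:bddGeometry} and the coarse embedding inequalities, the diameter of $Y_i$ is bounded by a constant depending only on $R$ and the uniform diameter bound of the $U_i$'s. For $r$ sufficiently large (depending only on this diameter bound), each $Y_i$ lies in a single simplex of $P_r(X)$, so the assembly map on $A(P_r(X),E)^R_{U_i}$ reduces to a single-point assembly, which is an isomorphism with constants independent of $i$. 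Combined with the decomposition of the previous step, this yields the claim.

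The main obstacle I expect is the K-theoretic step linking the two: the assertion that the asymptotic $\ell^\infty$-direct sum decomposition distributes K-theory as the direct sum of the K-theories of the pieces. This requires the uniformity in the previous step to be quantitatively precise, so that an idempotent assembled from idempotents $\{p_i\}_i$ over the pieces remains uniformly bounded in norm and uniformly controlled in propagation and in $E$-support radius. The needed uniformity follows from the bounded geometry of $X$ together with the $\delta$-disjoint, uniformly bounded structure of $\{U_i\}$, and is a standard input in coarse Mayer--Vietoris type arguments (see, e.g., \cite[Section~12.5]{willett2020higher}).
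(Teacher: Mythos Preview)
Your proposal is correct and follows essentially the same route as the paper, whose proof consists of the single assertion $K_*(A(P_1(X),E)^R_U)=\prod_{i\geq 1} K_*(A(P_1(X),E)^R_{U_i})$ together with an appeal to Lemma~\ref{lemma:cbcBounded}. You have supplied the details behind that assertion---the block decomposition coming from $\delta$-disjointness and vanishing $\ell^q$-propagation, and the uniformity in $i$ needed for $K$-theory to commute with the product---which the paper leaves implicit; one small imprecision is the phrase ``norm-vanishing perturbation,'' since the cross terms $\psi_iT_s\psi_j$ are not small in the sup-norm over $s$ but rather vanish for all sufficiently large $s$, so the decomposition holds after a reparameterization in $s$ (or modulo the contractible ideal of paths that are eventually zero).
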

\begin{proof}
Note that  
	$$K_*(A(P_1(X),E)^R_U)=\prod_{i\geq 1} K_*(A(P_1(X),E)^R_{U_i}).$$
	Now the conclusion follows directly from Lemma \ref{lemma:cbcBounded}.
\end{proof}

Now by a standard Mayer--Vietoris argument, we have the following corollary.
\begin{corollary}\label{coro:cbcMV}
	If $R>0$ and $U=\bigcup_{i\geq 1} U_i$ such that  the $U_i$'s are uniformly bounded and the collection $\{U_i\}$ has  finite multiplicity, then 
	$$\ev_*\colon \lim_{r\to\infty}K_*(A_L(P_r(X),E)^R_U)\to K_*(A(P_1(X),E)^R_U)$$
	is an isomorphism.
\end{corollary}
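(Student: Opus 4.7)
The plan is to reduce to Corollary \ref{coro:cbcBounded} via an inductive Mayer--Vietoris argument exploiting the finite multiplicity $N$ of the cover $\{U_i\}$.

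The first ingredient is a six-term Mayer--Vietoris exact sequence for the twisted algebras: for any $V,W\subset \ell^q$, the ideals $A(P,E)^R_V$, $A(P,E)^R_W$, $A(P,E)^R_{V\cap W}$ and $A(P,E)^R_{V\cup W}$ should fit into a pullback square, yielding a long exact sequence in $K$-theory, and similarly for the localization algebras $A_L(P,E)^R$. The evaluation map at $s=1$ intertwines the two sequences, so the five-lemma applies to reduce the isomorphism statement for $V\cup W$ to the isomorphism statements for $V$, $W$, and $V\cap W$.

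The second ingredient is a coloring of the index set. Using the finite multiplicity $N$, the uniform boundedness of the $U_i$, and (in the applicable setting) the bounded geometry of the underlying space $X$ via Lemma \ref{lemma:bddGeometry}, I would partition the indices into finitely many classes $I_1,\ldots,I_M$ such that within each $I_k$, the sets $\{U_i\}_{i\in I_k}$ are pairwise $\delta_k$-disjoint for some $\delta_k>0$. Each single-color union $\bigsqcup_{i\in I_k}U_i$ is then covered by Corollary \ref{coro:cbcBounded}. With these two ingredients, the induction runs as follows: set $V_k=\bigcup_{j\leq k}\bigsqcup_{i\in I_j}U_i$ and $W_k=\bigsqcup_{i\in I_k}U_i$, so that $V_k=V_{k-1}\cup W_k$. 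The intersection $V_{k-1}\cap W_k=\bigsqcup_{i\in I_k}(V_{k-1}\cap U_i)$ remains a $\delta_k$-disjoint union of uniformly bounded sets, so Corollary \ref{coro:cbcBounded} applies to it as well as to $W_k$. The inductive hypothesis handles $V_{k-1}$, and Mayer--Vietoris plus the five-lemma produce the isomorphism for $V_k$. After $M$ steps, $V_M=U$.

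I expect the main obstacle to be verifying the Mayer--Vietoris pullback square at the twisted level, compatibly with the $E$-support constraints of Definitions \ref{def:AalgebraFinite} and \ref{def:AalgebraLocal}: an operator in $A(P,E)^R_{V\cup W}$ must be split into summands whose $E$-supports are controlled by $V$ and $W$ respectively, without destroying the decay of $\ell^q$-propagation in the $s$-direction. This is accomplished by cutting and pasting with smooth $\ell^q$-bump functions subordinate to $\{V,W\}$, using the same kind of estimates as in Lemma \ref{lemma:cut-off} and the cut-off analysis of Proposition \ref{prop:indMap}; the small Lipschitz constants of those bump functions, together with part (3) of Proposition \ref{prop:phi}, ensure that the commutator errors introduced by the cut-offs stay inside the relevant ideals. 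A secondary point is that the same Mayer--Vietoris machinery must also be carried out for $A_L(P,E)^R$ uniformly in $r$, so that the limit $\lim_{r\to\infty}$ commutes with the exact sequences; this follows from the fact that the cut-offs used are independent of $r$.
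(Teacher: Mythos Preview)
Your outline is precisely the standard Mayer--Vietoris argument the paper invokes. Two small remarks. First, your hedge ``in the applicable setting'' is essential and not merely cautious: finite multiplicity of $\{U_i\}$ does \emph{not} by itself yield a coloring into $\delta$-disjoint classes (the intersection graph of a finite-multiplicity cover can have unbounded degree), but in the only application (Proposition~\ref{prop:cbcTwisted}) the sets are balls $B_q(f(x),R{+}1)$ indexed by $x\in X$, and running the argument of Lemma~\ref{lemma:bddGeometry} with radius $R{+}1{+}\delta/2$ shows the $\delta$-proximity graph on these balls has bounded degree, after which a greedy coloring works. Second, the decomposition needed for the Mayer--Vietoris square is simpler than you describe: writing $T_s=\psi_V T_s+\psi_W T_s$ for any bounded Borel partition of unity on $E$ subordinate to $\{V,W\}$ introduces no commutator terms and does not change the $\ell^q$-propagation at all (multiplication operators on $E$ have zero propagation), so Proposition~\ref{prop:phi} and the Lipschitz control of Lemma~\ref{lemma:cut-off} play no role here---those are tools for constructing the index map, not for excision on the ideals $A(P,E)^R_U$.
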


Now we are ready to prove Proposition \ref{prop:cbcTwisted}.
\begin{proof}[Proof of Proposition \ref{prop:cbcTwisted}]
	Let
	$$U_R=\bigcup_{x\in X}B_q(f(x),R+1),$$
	where $B_q(f(x),R)$ is the ball of radius $R$ center at $f(x)$ in $\ell^q$. 
	It follows from Definitions \ref{def:AalgebraFinite} and \ref{def:AalgebraLocal} that 
	$$A(P_r(X),E)^R=A(P_r(X),E)_{U_R}^R$$
	for any scale $r>0$.
	
	By Lemma \ref{lemma:bddGeometry},  the collection $\{B_q(f(x),R+1)\}_{x\in X}$ has finite multiplicity in $X$. Thus by Corollary \ref{coro:cbcMV}, the map 
	$$\ev_*\colon \lim_{r\to\infty}K_*(A_L(P_r(X),E)^R_{U_R})\to K_*(A(P_1(X),E)^R_{U_R})$$
	 is an isomorphism for every $R>0$. Now the proof is finished by observing that 
	 \[ \lim_{R\to \infty} \lim_{r\to\infty}K_*(A_L(P_r(X),E)^R) = \lim_{r\to\infty}K_*(A_L(P_r(X),E))\] 
	 and 
	 \[\lim_{R\to \infty} K_*(A(P_r(X),E)^R) = K_*(A(P_1(X),E)).\]
\end{proof}

By combining the above discussion, now we  prove Theorem \ref{thm:lqcBC}.
\begin{proof}[Proof of Theorem \ref{thm:lqcBC}]
As discussed at the beginning of Section \ref{sec:twistedRoe}, to prove 
\[ \ev_*\colon \lim_{r\to\infty}K_*( C^*_L(P_r(X)))\to K_*( C^*(X))\]
is an isomorphism, it suffices to prove the case 	where  $X$ is a coarse disjoint union, cf.  \cite[Section 12.5]{willett2020higher}. 

It follows from Definition \ref{def:Aalgebra}, Definition \ref{def:Roe} and Proposition \ref{prop:indMap} that we have the following commutative diagram	\[
	\begin{tikzcd}\displaystyle
		\lim_{r\to\infty}K_*(C^*_L(P_r(X)) \arrow[rd, "\ind"] \arrow[ddd,"ev_*"] \arrow[rr, dashed] &                                                                                      & \displaystyle\lim_{r\to\infty}K_*(C^*_L(P_r(X),E)) \arrow[ddd,"ev_*"] \\
		& {\displaystyle\lim_{d\to\infty}K_*(A_L(P_r(X),E))} \arrow[ru, "\imath_*"] \arrow[d, "ev_*"] &                                                      \\
		& {K_*(A(P_1(X),E))} \arrow[rd,"\imath_*"]                                                         &                                                      \\
		K_*(C^*(X)) \arrow[ru,"\ind"] \arrow[rr, dashed]                                              &                                                                                      & {K_*(C^*(P_1(X),E))}                                   
	\end{tikzcd}
	\]
 The horizontal dashed arrows are isomorphisms by Proposition \ref{prop:composition}. The vertical map $\ev_*$ in the middle is an isomorphism by Proposition \ref{prop:cbcTwisted}. A standard diagram chasing shows that  the left vertical map $ev_*$ is an isomorphism. This finishes the proof.  
\end{proof}

\section{$\ell^p$-coarse Baum--Connes conjecture for coarsely embeddable spaces}\label{sec:lplqCBC}
In this section, we prove Theorem \ref{thm:lp-lqcBCIntro} for general $p\geq 1$.
\subsection{The $\ell^p$-coarse Baum--Connes conjecture}
 In this subsection, we introduce $\ell^p$-Roe algebras and prove the corresponding $\ell^p$-coarse Baum--Connes conjecture. We point out  that the $\ell^p$-Roe algebras in the current paper are slightly different from the version of $\ell^p$-Roe algebras introduced by Zhang and Zhou in  \cite{Zhangzhoulp}. See the discussion following Conjecture \ref{conj:l^pCBC} for more details. 

For simplicity, let $P$ be a finite dimensional simplicial complex with bounded geometry, e.g., the Rips complex of a locally finite metric space. Let $Z$ be a countable dense subset of $P$. Denote by $\N$ the set of natural numbers and $H=\ell^2(\N)$. We denote by $\ell^p(Z,\ell^p(\N,H))=\ell^p(Z\times \N,H)$ the completion of simple functions $\varphi\colon Z\times\N\to H$ with the following norm
$$\|\varphi\|_p\coloneqq \left(\sum_{(z,n)\in Z\times\N}\|\varphi(z,n)\|^p_H\right)^{1/p}.$$
Equivalently, the Banach space $\ell^p(Z,\ell^p(\N,H))$ can be viewed as  the space of functions on $Z\times\N\times\N$ with respect to the following mixed $\ell^p$-$\ell^2$ norm:
$$\|\varphi\|_p= \left(\sum_{(z,n)\in Z\times\N}\left(\sum_{m\in\N}|\varphi(z,n,m)|^2\right)^{p/2}\right)^{1/p}.$$
Each compactly supported bounded Borel function on $P$ acts on $\ell^p(Z,\ell^p(\N,H))$ by multiplication on the first factor $Z$. Analogous to the $\ell^2$ case,  we shall use the space $\ell^p(Z,\ell^p(\N,H))$ as the geometric  module to construct the corresponding $\ell^p$-Roe algebra. 
\begin{definition}\label{def:lpPropLcpt} Let $T$ be a bounded operator on $\ell^p(Z,\ell^p(\N,H))$.
	\begin{enumerate}
		\item The \emph{support} of $T$, denoted by $\supp(T)$, is defined to be the collection of points $(x,y)$ in $Z\times Z$, such that $\chi_VT\chi_U\ne 0$ for all open neighborhoods $U$ of $x$ and $V$ of $y$, where $\chi_U$ and $\chi_V$ are the characteristic functions of $U$ and $V$, respectively.
		\item The \emph{propagation} of $T$, denoted by $\prop(T)$, is defined to be the supremum of $d(x,y)$ for all $(x,y)\in\supp(T)$.
		\item $T$ is called \emph{locally compact}, if $\chi_KT$ and $T\chi_K$ are compact operators on $\ell^p(Z,\ell^p(\N,H))$ for any compact set $K\subset Z$.
	\end{enumerate}
\end{definition}
\begin{definition}\label{def:lpRoe}
	Let $P$ be a finite dimensional simplicial complex. Let $ C^p_{alg}(P)$ be the collection of all bounded operators on $\ell^p(Z,\ell^p(\N,H))$ that are locally compact and have finite propagation. The $\ell^p$-Roe algebra of $P$, denoted by $C^p(P)$, is defined to be the Banach algebra obtained by the completion of $ C^p_{alg}(P)$ with respect to the operator norm.
\end{definition}

The $K$-theory of the $\ell^p$-Roe algebras is invariant under coarse equivalences. This follows from the same proof of \cite[Lemma 2.8]{Zhangzhoulp}. Here we omit the details.
\begin{lemma}\label{lemma:coarseEq}
	Let $P$ and $P'$ be simplicial complexes and $f\colon P\to P'$ a continuous coarse map. Then $f$ cannonically induces a map
	$$f_*\colon K_*(C^p(P))\to K_*(C^p(P')).$$
	In particular, $C^p(P)$ is independent of the choice of the countable dense subset $Z$.
\end{lemma}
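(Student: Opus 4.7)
The plan is to adapt the standard \emph{covering isometry} construction from the Hilbert space case to the $\ell^p$ setting, exactly in the spirit of \cite[Lemma 2.8]{Zhangzhoulp}. First I would replace the continuous coarse map $f\colon P\to P'$ by a Borel approximation $\tilde f\colon Z\to Z'$, where I choose $\tilde f(z)$ to be any point of $Z'$ within distance $1$ of $f(z)$. Since $f$ is a coarse map, $\tilde f$ is still coarse: for every $R>0$ there exists $S(R)>0$ with $d(z_1,z_2)\le R \Rightarrow d(\tilde f(z_1),\tilde f(z_2))\le S(R)$.

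Next I would construct a bounded linear isometric embedding
\[
V\colon \ell^p(Z,\ell^p(\mathbb N,H)) \longrightarrow \ell^p(Z',\ell^p(\mathbb N,H)), \qquad V(\delta_z\otimes e_n\otimes h) = \delta_{\tilde f(z)} \otimes e_{g(z,n)} \otimes h,
\]
where $g\colon Z\times\mathbb N\to\mathbb N$ is chosen so that the map $(z,n)\mapsto(\tilde f(z),g(z,n))$ is injective. Such a $g$ exists because the ample book-keeping factor $\ell^p(\mathbb N,H)$ provides unlimited room; injectivity is what makes $V$ an isometry of $\ell^p$-spaces. Its canonical left inverse $V^\dagger$ is the coordinate projection that sends $\delta_{z'}\otimes e_m\otimes h$ back to $\delta_z\otimes e_n\otimes h$ if $(z',m)$ is in the image and to $0$ otherwise; this is a contraction of norm $\le 1$. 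I would then define
\[
\mathrm{Ad}(V)\colon T \longmapsto VTV^\dagger,
\]
and verify that $\mathrm{Ad}(V)$ carries $C^p_{alg}(P)$ into $C^p_{alg}(P')$. Finite propagation is preserved because $\tilde f$ is coarse: if $\prop(T)\le R$, then $\prop(VTV^\dagger) \le S(R)$. Local compactness is preserved because multiplication by a characteristic function on $P'$ pulls back under $V^\dagger$ to multiplication by a characteristic function on the compact preimage in $P$, where $T$ is already locally compact, and the first $Z'$-variable has bounded fiber in $Z$ via $\tilde f$ by the bounded geometry of $P$. Continuity in operator norm then gives a bounded Banach-algebra homomorphism $C^p(P)\to C^p(P')$, hence a map $f_*\colon K_*(C^p(P))\to K_*(C^p(P'))$ at the level of $K$-theory.

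The remaining step is to check that $f_*$ is independent of the auxiliary choices $(\tilde f, g, V)$. Given two such choices $V_1,V_2$ arising from Borel approximations $\tilde f_1,\tilde f_2$ that are uniformly close (say within $1$), the linear interpolation trick does not apply in a Banach algebra, so instead I would build an explicit isometric embedding
\[
W\colon \ell^p(Z,\ell^p(\mathbb N,H)) \oplus_p \ell^p(Z,\ell^p(\mathbb N,H)) \longrightarrow \ell^p(Z',\ell^p(\mathbb N,H))
\]
whose two summands restrict to $V_1$ and $V_2$; this is again enabled by the countably infinite book-keeping factor. With $W$ in hand, the pair $(\mathrm{Ad}(V_1)(T),0)$ and $(0,\mathrm{Ad}(V_2)(T))$ both lie in the corner algebra cut out by $W$, and an Eilenberg-swindle/rotation homotopy inside this larger matrix algebra (of the standard form used to show that inner conjugates induce the same map on $K$-theory) shows $(\mathrm{Ad}(V_1))_* = (\mathrm{Ad}(V_2))_*$ on $K_*$. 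Specializing to $P=P'$, $f=\mathrm{id}$, and two different dense subsets $Z,Z'$ yields the final assertion of independence of $Z$.

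The routine parts are the control of propagation and local compactness under $\mathrm{Ad}(V)$, which follow directly from the definitions once the discretization is in place. The only nontrivial point is the independence step: in the Hilbert space ($p=2$) setting one would conjugate by a unitary provided by the polar decomposition, but in a general $\ell^p$ Roe algebra there is no such polar decomposition, so one must replace that step with the doubling-and-swindling argument above, and verify that the resulting homotopies live in matrix algebras over $C^p(P')$ rather than leaving it. This is the only delicate point in the proof; otherwise the argument is a direct transcription of the $C^*$-algebraic coarse invariance proof.
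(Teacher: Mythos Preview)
Your proposal is correct and follows exactly the route the paper indicates: the paper does not give its own proof but simply notes that the argument is the same as \cite[Lemma~2.8]{Zhangzhoulp} and omits the details, and your covering-isometry construction with the doubling/swindle independence argument is precisely that proof carried out in the present $\ell^p(Z,\ell^p(\mathbb N,H))$ module. The only cosmetic point is that your symbol $V^\dagger$ should be read as the coordinate left inverse you describe (since there is no adjoint for $p\neq 2$), and you should make explicit that properness of the coarse map $f$ is what guarantees $\tilde f^{-1}(K)$ is bounded when verifying local compactness.
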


Now we formulate the $\ell^p$-coarse Baum--Connes conjecture.
\begin{definition}\label{def:lpLocalization}
		Let $P$ be a finite dimensional simplicial complex. Let $C_L^p(P)$ be the completion of all uniformly continuous paths
		$$T\colon [0,\infty)\to C^p_{alg}(P)$$
		such that $\|T(t)\|$ is uniformly bounded and $\prop(T(t))\to 0$ as $t\to\infty$, with respect to the sup-norm
		$$\|T\|\coloneqq \sup_{t\geq 0}\|T(t)\|.$$
\end{definition}

Recall that in Section \ref{sec:preliminary} we have introduced the Rips complex $P_r(X)$ at scale $r$ for a locally finite metric space $X$ with bounded geometry. Note that $P_d(X)$ is coarsely equivalent to $X$, hence the $K$-groups of their $\ell^p$-Roe algebras are canonically isomorphic.
\begin{conjecture}[$\ell^p$-coarse Baum--Connes conjecture]\label{conj:l^pCBC}
	The evaluation map 
	\[ ev\colon C^p_L(P_r(X))\to C^p(X)\] that maps $f$ to $f(0)$ induces a $K$-theoretic isomorphism
	$$ev_*\colon \lim_{r\to\infty}K_*(C^p_L(P_r(X)))\to K_*(C^p(X)).$$
\end{conjecture}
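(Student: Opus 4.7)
The strategy is to adapt the entire framework of Section \ref{sec:lqCBC} to the $\ell^p$-setting, paralleling each step of the proof of Theorem \ref{thm:lqcBC} but now with the $\ell^p$-Roe algebra $C^p(X)$ in place of $C^*(X)$. The key insight is that the Bott--Dirac operator construction from Subsection \ref{sec:BottDirac} is inherently an $L^2$-object (it arises from Clifford multiplication on $\Bigwedge^\ast E$), while the $\ell^p$-Roe algebra acts on an $\ell^p$-module. Thus, the twisted algebras must be defined on a mixed $\ell^p$-$\ell^2$ Banach space, and we will need genuinely new analytic input to estimate operators in this mixed setting.

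First, I would introduce an $\ell^p$-analogue $A^p(P,E)$ of the twisted Roe algebra by repeating Definition \ref{def:Aalgebra} with the underlying module replaced by a mixed-norm space such as $\ell^p(Z, \ell^p(\N, H)) \otimes L^2(E,\Bigwedge^\ast E)$, where the tensor product is completed in a norm that mixes the $\ell^p$-norm on $Z\times \N$ with the $L^2$-norm on $E$. Correspondingly, I would define $A^p_L(P,E)$ analogously to Definition \ref{def:lpLocalization}. The essential technical point is to verify that the bounded operators $\Phi_{s,v}$ from Proposition \ref{prop:phi}, which were constructed by $L^2$-functional calculus on $L^2(E,\Bigwedge^\ast E)$, still act boundedly on the mixed-norm space when extended block-diagonally in the $\ell^p$-variable. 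This is precisely where the vector-valued Marcinkiewicz--Zygmund inequality (Theorem \ref{thm:VectorMarZygmundIneq}, alluded to in the introduction) enters: it allows one to control the $\ell^p$-$\ell^2$ mixed norm of such block-diagonal operators in terms of their $L^2$-operator norms, with constants independent of $p$ and of the dimension of the ambient $\ell^p$-direction.

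Second, I would construct the index map
\[
\ind^p \colon K_\ast(C^p(P)) \to K_\ast(A^p(P,E))
\]
using exactly the formula \eqref{eq:index}, and verify the analogues of Proposition \ref{prop:indMap} and Proposition \ref{prop:composition}. The proofs carry over formally once we know that (i) the seven properties of $\Phi_{s,v}$ in Proposition \ref{prop:phi} survive in the mixed $\ell^p$-$\ell^2$ norm, and (ii) the Lipschitz cut-offs $\psi_{v,R}$ still satisfy the commutator estimates of Lemma \ref{lemma:commutator} in this setting. The homotopy argument using the radial contraction $\kappa$ in Proposition \ref{prop:composition} is purely algebraic/$K$-theoretic and transfers without change, reducing the composition $\imath_\ast \circ \ind^p$ to tensoring with the index class of the Bott--Dirac operator, hence an isomorphism.

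Finally, I would prove the $\ell^p$-analogue of Proposition \ref{prop:cbcTwisted} by the same Mayer--Vietoris scheme, stratifying $A^p(P,E)$ by the ideals $A^p(P,E)^R$ and localizing over the $\ell^q$-covers $U_R$. The bounded-geometry multiplicity bound of Lemma \ref{lemma:bddGeometry} and the reduction of Lemma \ref{lemma:cbcBounded} to a single simplex are metric/combinatorial statements insensitive to $p$, so they apply verbatim; one only needs the $\ell^p$-version of the assembly isomorphism for a point, which is standard. The principal obstacle throughout is analytic rather than structural: controlling the mixed $\ell^p$-$\ell^2$ norms of the operators arising from $L^2$-functional calculus of the Bott--Dirac operator. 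Once the Marcinkiewicz--Zygmund machinery delivers these bounds uniformly in the dimension of $E_n$, the entire geometric argument of Section \ref{sec:lqCBC} goes through, and the proof is completed by exactly the commutative-diagram chase at the end of the proof of Theorem \ref{thm:lqcBC}, with $C^\ast$ replaced by $C^p$ throughout.
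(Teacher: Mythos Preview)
Your proposal is correct and follows the paper's approach essentially step for step: define $A^p(P,E)$ on the mixed module $\mathscr{M}_{P,E}=\ell^p(Z,\ell^p(\N,H\otimes H_E))$, construct the index map via formula \eqref{eq:index}, verify the $\ell^p$ analogues of Propositions \ref{prop:indMap}, \ref{prop:composition}, and \ref{prop:cbcTwisted}, and conclude with the same diagram chase. One small correction: the vector-valued Marcinkiewicz--Zygmund inequality is \emph{not} needed to bound the block-diagonal operators $1\otimes\Phi$ (their norm is trivially $\sup_{x}\|\Phi_{s,f(x)}\|$); rather, it is invoked to show that $\cP\otimes 1$ is bounded on $\mathscr{M}_{P,E}$ for $\cP\in C^p(P)$, i.e., to pass from operators on $\ell^p(Z,\ell^p(\N,H))$ to operators on $\ell^p(Z,\ell^p(\N,H\otimes H_E))$---this is exactly why the extra Hilbert factor $H$ is built into the definition of the $\ell^p$-Roe algebra in Definition~\ref{def:lpRoe}.
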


We note that the $\ell^p$-Roe algebra defined here differs slightly from the one introduced by Zhang and Zhou in \cite{Zhangzhoulp}. 
More specifically, the $\ell^p$-Roe algebra of $P$  in \cite[Definition 2.6]{Zhangzhoulp}, which we denote by $\mathbf{C}^p(P)$, is defined to be the Banach algebra generated by locally compact operators with finite propagation acting on $\ell^p(Z,\ell^p(\N))$, where $Z$ is a countable dense subset of $P$ as above. Similarly, there is a corresponding version of  $\ell^p$-localization algebra from \cite{Zhangzhoulp}, which we denote by $\mathbf{C}_L^p(P)$. A standard Mayer--Vietoris argument shows that the $K$-groups of $\mathbf{C}_L^p(P)$ (from \cite{Zhangzhoulp})  and $K$-groups of the $\ell^p$-localization algebra  $C_L^p(P)$ of the current paper  are naturally isomorphic when $P$ is a finite dimensional simplicial complex, both being isomorphic to the $K$-homology of $P$. However, whether the two definitions of $\ell^p$-Roe algebras have isomorphic $K$-groups in general remains unclear. More precisely, we state the question as follows. 
\begin{question}
	Let $X$ be a proper metric space and $Z$ a countable dense subset of $X$. Let $C^p(X)$ and $\mathbf C^p(X)$ be the completion of locally compact and finite propagation operators acting on the $C_0(X)$-modules $\ell^p(Z,\ell^p(\mathbb N))$ and $\ell^p(Z,\ell^p(\mathbb N,H))$, respectively. Let $e$ be a rank-one projection on $H$. Does the following map
	$$\mathbf C^p(X)\to C^p(X),~T\mapsto T\otimes e$$
	induces an isomorphism $K_*(\mathbf C^p(X))\to K_*(C^p(X))$?
\end{question}
\subsection{A vector-valued Marcinkiewicz--Zygmund inequality}
In this subsection, we prove a technical result concerning the operator norms of operators on $\ell^p$-spaces. Throughout this subsection, we assume that $X$ is a countable discrete set and $H=\ell^2(\N)$. Recall that the classic Marcinkiewicz--Zygmund inequality states that a bounded operator $T$ acting on $\ell^p(X)$ induces a bounded operator $T\otimes 1$ on $\ell^p(X,H)$ with the same norm. We will prove a vector-valued version of this inequality, as stated below.
\begin{theorem}\label{thm:VectorMarZygmundIneq}
	Let $T$ be a bounded operator acting on $\ell^p(X,H)$. Then there exists $K_G>0$ such that   
	$$\sum_{x\in X}\left(\sum_{i=1}^N\|(T\varphi_i)(x)\|_H^2\right)^{p/2}\leq K_G^p\|T\|^p\cdot\sum_{x\in X}\left(\sum_{i=1}^N\|\varphi_i(x)\|_H^2\right)^{p/2}$$
	for any  elements  $\varphi_1,\ldots,\varphi_N\in \ell^p(X,H)$.
	Equivalently, $T\otimes 1$ is a bounded operator on $\ell^p(X,H\otimes H)$ with $\|T\otimes 1\|\leq K_G\|T\|$.
\end{theorem}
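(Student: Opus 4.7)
The plan is to reduce the vector-valued inequality to the pointwise operator-norm bound $\|TF\|_{\ell^p(X,H)}\le \|T\|\,\|F\|_{\ell^p(X,H)}$ by Gaussian randomization, identifying the resulting $L^p$-moments with the square functions on both sides via the Kahane--Khintchine inequality.

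Concretely, I would introduce on an auxiliary probability space $(\Omega,\mathbb P)$ independent standard Gaussians $g_1,\dots,g_N$ and set
\[
F_\omega\coloneqq \sum_{i=1}^N g_i(\omega)\,\varphi_i \in \ell^p(X,H).
\]
By linearity $TF_\omega=\sum_i g_i(\omega)\,T\varphi_i$, so the operator-norm hypothesis gives $\|TF_\omega\|_{\ell^p(X,H)}^{p}\le \|T\|^{p}\|F_\omega\|_{\ell^p(X,H)}^{p}$ pointwise in $\omega$. Integrating against $\mathbb P$ and swapping $\mathbb E_\omega$ with the sum over $x\in X$ by Fubini yields
\[
\sum_{x\in X}\mathbb E_\omega\bigl\|(TF_\omega)(x)\bigr\|_H^{p} \;\le\; \|T\|^{p}\sum_{x\in X}\mathbb E_\omega\|F_\omega(x)\|_H^{p}.
\]

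For each fixed $x$, I would then recognise $F_\omega(x)=\sum_i g_i(\omega)\varphi_i(x)$ as a centered $H$-valued Gaussian with $\mathbb E\|F_\omega(x)\|_H^2=\sum_i\|\varphi_i(x)\|_H^2$, and invoke the Kahane--Khintchine inequality for Hilbert-space-valued symmetric sums to obtain universal constants $c_p,c_p'>0$ with
\[
c_p'\Bigl(\sum_i\|w_i\|_H^{2}\Bigr)^{p/2} \;\le\; \mathbb E\Bigl\|\sum_i g_iw_i\Bigr\|_H^{p} \;\le\; c_p\Bigl(\sum_i\|w_i\|_H^{2}\Bigr)^{p/2}
\]
for all $w_1,\dots,w_N\in H$, uniformly in $N$. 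Applying the upper bound on the right with $w_i=\varphi_i(x)$ and the lower bound on the left with $w_i=(T\varphi_i)(x)$, and then summing in $x$, the claimed inequality drops out with $K_G^{\,p}=c_p/c_p'$.

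The step that will require the most care is the Kahane--Khintchine two-sided bound in the $H$-valued setting with constants independent of $N$ and of the dimension of $\mathrm{span}\{\varphi_i(x)\}\subseteq H$; this reduces by Gaussian rotational invariance to the classical scalar Kahane--Khintchine estimate on a one-dimensional subspace at each level set of the norm. The introduction's reference to Grothendieck's inequality suggests an alternative dualised route, bounding the bilinear pairing $\sum_i\langle T\varphi_i,\psi_i\rangle$ via a factorisation through a Hilbert space; that route would produce a $p$-independent constant of Grothendieck type, but the Gaussian argument sketched above is essentially equivalent and is enough for the applications in the paper.
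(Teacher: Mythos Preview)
Your argument via Gaussian randomization and the Kahane--Khintchine inequality is correct and is a standard alternative route. The paper proceeds differently: it identifies $\ell^p(X,H)$ with the mixed-norm space $\ell^{p,2}(X\times\N)$, introduces the pointwise square function $\varPhi(x,n)=\bigl(\sum_i|\varphi_i(x,n)|^2\bigr)^{1/2}$, and for each $\xi\in\ell^{p'}(X,H)$ of norm at most one applies Grothendieck's inequality directly to the scalar operator $\ell^\infty(X\times\N)\ni\eta\mapsto \xi\cdot T(\eta\,\varPhi)\in\ell^1(X\times\N)$; taking the supremum over $\xi$ then recovers the $\ell^p$-norm of the square function of $(T\varphi_i)_i$. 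The dividend of the paper's approach is that the resulting constant is the Grothendieck constant $K_G$ itself and in particular does not depend on $p$, whereas your Kahane--Khintchine constants $c_p,c_p'$ do (and degenerate as $p\to\infty$). For the applications later in the paper $p$ is fixed and only the boundedness of $T\otimes 1$ is needed, so either argument suffices; but the statement as written, reusing the symbol $K_G$ from the Grothendieck inequality, is asserting the sharper $p$-uniform bound, and for that the factorization route is the right one. Your closing remark anticipated exactly this distinction.
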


The proof of Theorem \ref{thm:VectorMarZygmundIneq} is inspired by \cite[Theorem 8.1]{MR829919}. A key ingredient of the proof is the Grothendieck inequality, which we recall here for the convenience of the reader.
\begin{theorem}[Grothendieck inequality]\label{thm:GrothendieckIneq}
		Let $T\colon\ell^\infty(X)\to\ell^1(X)$ be a bounded operator. Then there exists $K_G>0$ such that    
	$$\sum_{x\in X}\left(\sum_{i=1}^N|(T\varphi_i)(x)|^2\right)^{1/2}\leq K_G\|T\|\cdot\sup_{x\in X}\left(\sum_{i=1}^N|\varphi_i(x)|^2\right)^{1/2}$$
for any $\varphi_1,\ldots,\varphi_N\in \ell^\infty(X)$.
	Equivalently, $T\otimes 1$ is a bounded operator from $\ell^\infty(X,H)$ to $\ell^1(X,H)$ with $\|T\otimes 1\|\leq K_G\|T\|$.
\end{theorem}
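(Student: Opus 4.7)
My strategy is to prove the scalar Grothendieck inequality (equivalently, the boundedness $\|T\otimes 1\|_{\ell^\infty(X,H)\to\ell^1(X,H)}\le K_G\|T\|$) via Krivine's trick, obtaining in the process the explicit bound $K_G\le\pi/(2\ln(1+\sqrt 2))$.

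First I reduce the statement to a finite bilinear inequality. By a standard approximation I may assume $X$ is finite, so that $T$ is represented by a matrix $A=(a_{xy})$. Duality between $\ell^\infty(X)$ and $\ell^1(X)$ gives
\[
\|T\|_{\ell^\infty\to\ell^1}=\sup\Bigl\{\Bigl|\sum_{x,y}a_{xy}s_yt_x\Bigr|:|s_y|,|t_x|\le 1\Bigr\}=:\|A\|_{\mathrm{bil}},
\]
and dualizing the $\ell^1(X,H)$ norm against $\ell^\infty(X,H)$ identifies the vector-valued statement with
\[
\Bigl|\sum_{x,y}a_{xy}\langle v_x,u_y\rangle_H\Bigr|\le K_G\,\|A\|_{\mathrm{bil}},\qquad \|u_y\|_H,\|v_x\|_H\le 1.\tag{$\ast$}
\]
The complex case then follows from the real case at the cost of a factor $2$ by splitting into real and imaginary parts, so I work throughout with a real Hilbert space $H$.

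The engine is \emph{Grothendieck's identity}: for unit vectors $u,v\in H$ and a standard Gaussian $g$ valued in $\mathrm{span}(u,v)$,
\[
\mathbb E\bigl[\operatorname{sgn}\langle g,u\rangle\operatorname{sgn}\langle g,v\rangle\bigr]=\tfrac{2}{\pi}\arcsin\langle u,v\rangle,
\]
which I prove by reducing to two dimensions and computing the angular probability directly. Since $\operatorname{sgn}\langle g,\cdot\rangle\in\{-1,+1\}$, substituting these signs as the admissible sequences in the bilinear form and taking expectations yields the auxiliary bound
\[
\Bigl|\sum_{x,y}a_{xy}\tfrac{2}{\pi}\arcsin\langle w_y,z_x\rangle\Bigr|\le\|A\|_{\mathrm{bil}}
\]
for all finite families of unit vectors $w_y,z_x$ in a real Hilbert space. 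To remove the unwanted $\arcsin$ I apply Krivine's trick. Let $c=\sinh^{-1}(1)=\ln(1+\sqrt 2)$ and work in the Hilbert space $\widetilde H=\bigoplus_{k\ge 0}H^{\otimes(2k+1)}$ with the inner product $\langle u^{\otimes n},v^{\otimes n}\rangle=\langle u,v\rangle^n$ on each summand. Setting $\alpha_k=\sqrt{c^{2k+1}/(2k+1)!}$ and $\beta_k=(-1)^k\alpha_k$, define
\[
\Phi(u)=\sum_{k\ge 0}\alpha_k\,u^{\otimes(2k+1)},\qquad \Psi(v)=\sum_{k\ge 0}\beta_k\,v^{\otimes(2k+1)}.
\]
For unit $u,v$ the identity $\sum_k c^{2k+1}/(2k+1)!=\sinh(c)=1$ gives $\|\Phi(u)\|=\|\Psi(v)\|=1$, while
\[
\langle\Phi(u),\Psi(v)\rangle_{\widetilde H}=\sum_{k\ge 0}(-1)^k\frac{c^{2k+1}}{(2k+1)!}\langle u,v\rangle^{2k+1}=\sin\bigl(c\,\langle u,v\rangle\bigr).
\]
Since $c<\pi/2$, we have $\arcsin\sin(c\,\langle u,v\rangle)=c\,\langle u,v\rangle$. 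Feeding $w_y=\Phi(u_y)$ and $z_x=\Psi(v_x)$ into the auxiliary inequality then yields
\[
\tfrac{2c}{\pi}\Bigl|\sum_{x,y}a_{xy}\langle u_y,v_x\rangle\Bigr|\le\|A\|_{\mathrm{bil}},
\]
which is exactly $(\ast)$ with $K_G=\pi/(2c)=\pi/(2\ln(1+\sqrt 2))$.

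The main obstacle is the Krivine construction: producing the odd symmetric-tensor power series $\Phi,\Psi$ that are simultaneously unit-normalized and whose inner product realizes $\sin(c\,\langle\cdot,\cdot\rangle)$. The key algebraic miracle is that the two conditions $\alpha_k\beta_k=(-1)^kc^{2k+1}/(2k+1)!$ and $\sum\alpha_k^2=\sum\beta_k^2=1$ are simultaneously solvable precisely when $\sinh(c)=1$; everything else is routine bookkeeping, together with the two-dimensional Gaussian computation behind Grothendieck's identity.
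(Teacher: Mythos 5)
The paper does not prove Theorem~\ref{thm:GrothendieckIneq}: it is presented as a classical result (``which we recall here for the convenience of the reader''), citing Pisier's CBMS monograph \cite{MR829919}. So there is no paper proof to compare against; I will evaluate your argument on its own terms.

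Your proof is the standard Krivine argument and is essentially correct. Grothendieck's sign identity $\mathbb{E}[\operatorname{sgn}\langle g,u\rangle\operatorname{sgn}\langle g,v\rangle]=\tfrac{2}{\pi}\arcsin\langle u,v\rangle$ is right (the Gaussian should be taken standard on the span of \emph{all} the vectors simultaneously, but you clearly intend this), averaging the sign vectors against the bilinear form gives the $\arcsin$-inequality, and the Krivine tensor-power construction with $c=\ln(1+\sqrt 2)$ correctly gives $\|\Phi(u)\|=\|\Psi(v)\|=1$ and $\langle\Phi(u),\Psi(v)\rangle=\sin(c\langle u,v\rangle)$, yielding $K_G\le\pi/(2\ln(1+\sqrt 2))$. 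The complex-to-real reduction is rough but adequate since only the existence of some $K_G$ is claimed.

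One step deserves more care: ``I may assume $X$ is finite, so that $T$ is represented by a matrix.'' A bounded operator $T\colon\ell^\infty(X)\to\ell^1(X)$ with $X$ infinite need \emph{not} be determined by its matrix entries $a_{xy}=(T\delta_y)(x)$, since $\ell^\infty(X)$ properly contains $c_0(X)$; for instance $T\varphi=L(\varphi)\,\delta_{x_0}$ with $L$ a Banach limit has zero matrix but $\|T\|=1$. So the identity $\|T\|_{\ell^\infty\to\ell^1}=\|A\|_{\mathrm{bil}}$ fails for such $T$, and the finite reduction does not automatically cover them. One usually handles this either by invoking the $C(K)$ form of Grothendieck's theorem with $\ell^\infty(X)\cong C(\beta X)$, or by a Cauchy--Schwarz estimate for the finitely additive ``singular'' part. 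For the paper's application this issue is vacuous: the operator $M'_\xi T M_\Phi$ in the proof of Theorem~\ref{thm:VectorMarZygmundIneq} factors through multiplication by the $\ell^{p,2}$-function $\Phi$, and is therefore determined by its matrix, so the matrix version of the inequality that you actually prove is exactly what is needed there.
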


\begin{proof}[Proof of Theorem \ref{thm:VectorMarZygmundIneq}]
	We first briefly show that the equivalence between the desired inequality and the boundedness of $T\otimes 1$. Let $\{e_i\}_{i=1,2,\ldots}$ be an orthonormal basis of $H$. Consider an element
	$$\varphi=\sum_{i=1}^N\varphi_i\otimes e_i\in \ell^p(X,H\otimes H)$$
with  $\varphi_1,\ldots,\varphi_N\in \ell^p(X,H)$. We note that
	$$\|\varphi\|^p=\sum_{x\in X}\left(\|\sum_{i=1}^N\varphi_i(x)\otimes e_i\|_{H\otimes H}\right)^p=\sum_{x\in X}\left(\sum_{i=1}^N\|\varphi_i(x)\|_H^2\right)^{p/2}.$$
	Since
	$$(T\otimes 1)\varphi=\sum_{i=1}^NT\varphi_i\otimes e_i,$$
	we have
	$$\|(T\otimes 1)\varphi\|^p=\sum_{x\in X}\left(\sum_{i=1}^N\|(T\varphi_i)(x)\|_H^2\right)^{p/2}.$$
	All such $\varphi$'s as above  are dense in $\ell^p(X,H\otimes H)$. Therefore, we have $\|T\otimes 1\|\leq C$ if and only if 
	$$\sum_{x\in X}\left(\sum_{i=1}^N\|(T\varphi_i)(x)\|_H^2\right)^{p/2}\leq C^p\cdot\sum_{x\in X}\left(\sum_{i=1}^N\|\varphi_i(x)\|_H^2\right)^{p/2}$$
	for any $\varphi_1,\ldots,\varphi_N\in \ell^p(X,H)$.
	
	Now we prove the inequality above. Note that $\ell^p(X,H)=\ell^{p,2}(X\times \N)$, which consists of functions over $X\times\N$ equipped with the $\ell^p$-$\ell^2$ mixed norm, that is, each element $f\in \ell^{p,2}(X\times \N)$ is equipped with the norm
	\[   \left(\sum_{x\in X}\left(\sum_{n\in \N} |f(x, n)|^2\right)^{p/2}\right)^{1/p}.\] 
	Let $\varphi_1, \dots, \varphi_N \in \ell^p(X, H) = \ell^{p,2}(X\times \N)$ as above. We define $\varPhi$ to be the function on $X\times\N$ given by
	$$\varPhi(x,n)\coloneqq \left(\sum_{i=1}^N|\varphi_i(x,n)|^2\right)^{1/2},~\forall x\in X,~n\in \N.$$
	 Consider the multiplication operator
	$$M_\varPhi\colon \ell^\infty(X\times\N)\to \ell^{p,2}(X\times \N),~\xi\mapsto \xi\cdot\varPhi,$$
	where $\xi\cdot\varPhi$ stands for the pointwise multiplication. 
	For any $\xi\in \ell^\infty(X\times\N)$, we have
	\begin{align*}
		(\|M_\varPhi\xi\|_{\ell^{p,2}(X\times \N)})^p=&\sum_{x\in X}\left(\sum_{n=1}^\infty|\xi(x,n)\varPhi(x,n)|^2\right)^{p/2}\\
		\leq&\sum_{x\in X}\left(\sum_{n=1}^\infty\sum_{i=1}^N|\varphi_i(x,n)|^2\right)^{p/2}\cdot (\|\xi\|_{\ell^\infty(X\times\N)})^p.
	\end{align*}
	Therefore,
	$$\|M_\varPhi\|^p\leq \sum_{x\in X}\left(\sum_{i=1}^N\|\varphi_i(x)\|_H^2\right)^{p/2}.$$
	
	For $p\in [1,\infty)$, $\ell^{p'}(X,H)$ is the dual space of $\ell^p(X,H)$, where $p'$ satisfies  $1/p+1/p'=1$. For each  $\xi\in \ell^{p'}(X,H)=\ell^{p',2}(X\times \N)$ with $\|\xi\|_{\ell^{p',2}(X\times \N)}\leq 1$, we  define the multiplication operator
	$$M_\xi'\colon \ell^{p,2}(X\times \N)\to \ell^1(X\times \N),~\varphi\mapsto\varphi\cdot\xi.$$ 
	Note that 
	$$\|M'_\xi\varphi\|_{\ell^1(X\times \N)}=(|\varphi|,|\xi|),$$
	where $(\cdot,\cdot)$ denotes the pairing between $\ell^p(X,H)$ and its dual. Therefore we have $\|M'_\xi\|\leq 1$.
	
	Denote $\varphi'_i=\varphi_i/\varPhi$, where we define $\varphi'_i(x,n)=0$ if $\varPhi(x,n)=0$. Note that for any $(x,n)\in X\times\N$, we have
	$$\left(\sum_{i=1}^N|\varphi'_i(x,n)|^2\right)^{1/2}=1.$$
	Hence $\varphi_i'\in\ell^\infty(X\times\mathbb N)$.
	Since $M_\varPhi(\varphi'_i)=\varphi_i$, we have
	$$\sum_{x\in X}\sum_{n=1}^\infty\left(\sum_{i=1}^N|\left(M'_\xi TM_\varPhi(\varphi'_i)\right)(x)|^2\right)^{1/2}=\sum_{x\in X}\sum_{n=1}^\infty|\xi(x,n)|\left(\sum_{i=1}^N|(T\varphi_i)(x,n)|^2\right)^{1/2}.$$
	
	By applying the Grothendieck inequality (Theorem \ref{thm:GrothendieckIneq}) to the operator
	$$M'_\xi TM_\varPhi\colon \ell^\infty(X\times\N)\to\ell^1(X\times \N)$$
	and the functions $\varphi'_1,\ldots,\varphi'_N$ as above,  we  obtain that
	\begin{align*}
		\sum_{x\in X}\sum_{n=1}^\infty|\xi(x,n)|\left(\sum_{i=1}^N|(T\varphi_i)(x,n)|^2\right)^{1/2}\leq& K_G \|M_\varPhi\|\cdot\|T\|\cdot\|M'_\xi\|\\
		=&K_G\|T\|\left(\sum_{x\in X}\left(\sum_{i=1}^N\|\varphi_i(x)\|_H^2\right)^{p/2}\right)^{1/p}.
	\end{align*}
	Since $\xi$ is arbitrary, this finishes the proof.	
\end{proof}
\subsection{Proof of Theorem \ref{thm:lp-lqcBCIntro}}
In this subsection, we finish the proof of Theorem \ref{thm:lp-lqcBCIntro} for general $p\in[1,+\infty)$. The proof is completely similar to the proof of Theorem \ref{thm:lqcBC}. We first reduce the proof to the case when $X$ is a coarse disjoint  union of finite sets. We then use the same operator constructed in Section \ref{sec:BottDirac} and construct the $\ell^p$-version of the corresponding twisted Roe algebras and the index map as in Section \ref{sec:cbcTwist}. Finally, Theorem \ref{thm:lp-lqcBCIntro} follows similarly from the coarse Baum--Connes isomorphism for the $\ell^p$-twisted algebras. 

Let us start from the matrix decomposition for operators in $C^p_{alg}(P_r(X))$. Let $(B_{r,x})_{r\geq 0, x\in X}$ be the \emph{good cover system} on $P_r(X)$ as in Section \ref{sec:twistedRoe}. Recall that $C^p_{alg}(P_r(X))$ is defined as the collection of operators acting on the $\ell^p$-geometric module $\ell^p(Z_r(X),\ell^p(\N,H))$. For any element $T\in C^p_{alg}(P_r(X))$ and $x,y\in X$, we set
$$T(x,y)=(\chi_{B_{r,x}}\otimes 1) T(\chi_{B_{r,y}\otimes 1})\colon \ell^p(B_{r,y},\ell^p(\N,H))\to \ell^p(B_{r,x},\ell^p(\N,H)).$$
\begin{lemma}\label{lemma:lpPropagation}\ 
	\begin{enumerate}[label=$({\arabic*})$]
		\item $T$ is locally compact if and only if $T(x,y)$ is compact for any $x,y\in X$.
		\item $T$ has finite propagation if and only if there exists $r_1>0$ such that $T(x,y)=0$ if $d(x,y)>r$. Furthermore, the smallest $r_1$ satisfies
		$$|r_1-\prop(T)|\leq 2.$$
		\item There exists $C>0$ such that 
		$$\sup_{x,y\in X}\|T(x,y)\|\leq \|T\|\leq C\sup_{x,y\in X}\|T(x,y)\|,$$ where $C$ only depends on the smallest $r_1$ above and the bounded geometry of $X$.
	\end{enumerate}
\end{lemma}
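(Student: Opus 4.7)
The plan is to treat the three assertions in turn; all three rest on two common ingredients, namely that operators in $C^p_{alg}(P_r(X))$ have finite propagation, and that $X$ has bounded geometry. Together these ensure that the matrix $(T(x,y))_{x,y \in X}$ is banded with a uniformly bounded number of nonzero entries in each row and column, which reduces everything to finite manipulations.

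For (1), the forward implication is immediate: each $B_{r,x}$ is relatively compact, so $T(x,y) = (\chi_{B_{r,x}} \otimes 1)\, T\, (\chi_{B_{r,y}} \otimes 1)$ is a compression of $T$ by multiplication operators with compactly supported symbols, hence compact whenever $T$ is locally compact. Conversely, given a compact $K \subset P_r(X)$, the function $\chi_K$ is supported in finitely many $B_{r,x}$'s, say indexed by a finite set $F$. Writing $\chi_K T = \sum_{x \in F,\, y \in X} \chi_K T(x,y)$ and noting that by finite propagation only finitely many $y$'s contribute for each fixed $x$, one sees that $\chi_K T$ is a finite sum of compact operators; the argument for $T\chi_K$ is symmetric.

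For (2), I would exploit that each $B_{r,x}$ has uniformly bounded diameter $c$ in the metric on $P_r(X)$. If $T$ has propagation $\prop(T)$ and $d(x,y) > \prop(T) + 2c$, then every pair $(u,v) \in B_{r,x} \times B_{r,y}$ satisfies $d_{P_r(X)}(u,v) > \prop(T)$ and hence lies outside $\supp(T)$, forcing $T(x,y) = 0$. Conversely, if $T(x,y) = 0$ for every $d(x,y) > r_1$, then $\supp(T) \subset \bigcup_{d(x,y) \leq r_1} B_{r,x} \times B_{r,y}$, which has diameter at most $r_1 + 2c$, giving $\prop(T) \leq r_1 + 2c$. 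The constant $2$ in the statement reflects the choice of good cover system, whose cells have diameter $c \leq 1$ under the spherical normalization adopted in the paper.

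For (3), the lower bound $\sup_{x,y} \|T(x,y)\| \leq \|T\|$ is immediate since the characteristic functions act as norm-one multipliers on the $\ell^p$-module. For the upper bound, decompose $\varphi = \sum_y \varphi_y$ with $\varphi_y = (\chi_{B_{r,y}} \otimes 1)\varphi$; since the $B_{r,y}$ are disjoint Borel sets, $\|\varphi\|^p = \sum_y \|\varphi_y\|^p$. Let $N$ be a uniform upper bound on $|\{y : d(x,y) \leq r_1\}|$, which exists by bounded geometry. For each $x$, the row sum $(\chi_{B_{r,x}} \otimes 1)\, T\varphi = \sum_{y : d(x,y) \leq r_1} T(x,y)\varphi_y$ has at most $N$ nonzero terms, so the convexity inequality $(a_1 + \cdots + a_N)^p \leq N^{p-1}(a_1^p + \cdots + a_N^p)$ gives
$$\|(\chi_{B_{r,x}} \otimes 1)\, T\varphi\|^p \leq N^{p-1} \sum_{y} \|T(x,y)\|^p \|\varphi_y\|^p.$$
Summing over $x$, swapping the order of summation, and using that each $y$ contributes to at most $N$ values of $x$, one obtains $\|T\varphi\|^p \leq N^p (\sup_{x,y} \|T(x,y)\|)^p \|\varphi\|^p$, giving $C = N$. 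The only genuine departure from the classical $\ell^2$ picture is the use of convexity of $t \mapsto t^p$ in place of Hilbert-space orthogonality; the constant $2$ in (2) is the only place where the specific good cover system enters, and pinning it down is the one piece of the argument that requires care rather than bookkeeping.
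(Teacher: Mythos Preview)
Your proposal is correct and matches the paper's approach: the paper only spells out the upper bound in (3), declaring that (1), (2), and the lower bound in (3) ``follow directly from the definition,'' and its argument for (3) is exactly your convexity-and-Fubini computation yielding $C=N$. You have simply supplied the routine details for (1) and (2) that the paper omits.
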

\begin{proof}
	We only prove that $\|T\|\leq C\sup_{x,y\in X}\|T(x,y)\|$ for some $C>0$, and the rest of the lemma follows directly from  the definition.
	
	For any $\varphi\in \ell^p(Z_r(X),\ell^p(\N,H))$, we have 
	$$\varphi=\sum_{x\in X}\varphi_x,\text{ and }\|\varphi\|^p=\sum_{x\in X}\|\varphi_x\|^p,$$
	where $\varphi_x\coloneqq (\chi_{B_{r,x}}\otimes 1)\varphi$. Therefore,
	$$T\varphi=\sum_{x\in X}(T\varphi)_x=\sum_{x\in X}\sum_{y\in X}(T\varphi_y)_x=\sum_{x\in X}\sum_{y\in X}T(x,y)\varphi_y,$$
	and
	$$\|T\varphi\|^p=\sum_{x\in X}\left\|\sum_{y\in X}T(x,y)\varphi_y\right\|^p$$
	Denote $N_{r_1}(x)$ the $r_1$-neighborhood of $x$ in $X$, where $r_1$ is the number from part $(2)$. 
	Since $X$ has bounded geometry, there exists $C>0$ such that for any $x\in X$, the number of elements of x$X$ that lie in $N_{r_1}(x)$ is at most $C$. By assumption, we have
	\begin{align*}
		\|T\varphi\|^p\leq& \sup_{x,y\in X}\|T(x,y)\|^p\cdot \sum_{x\in X}\left\|\sum_{y\in N_{r_1}(x)}\varphi_y\right\|^p\\
		\leq &\sup_{x,y\in X}\|T(x,y)\|^p\cdot C^{p-1}\sum_{x\in X}\sum_{y\in N_{r_1}(x)}\|\varphi_y\|^p\\
		=&\sup_{x,y\in X}\|T(x,y)\|^p\cdot C^{p-1}\sum_{y\in X}\sum_{x\in N_{r_1}(y)}\|\varphi_y\|^p\\
		\leq &\sup_{x,y\in X}\|T(x,y)\|^p\cdot C^{p-1}\|\varphi\|^p\cdot C=\sup_{x,y\in X}\|T(x,y)\|^p\cdot C^{p}\|\varphi\|^p.
	\end{align*}
	This finishes the proof.
\end{proof}

Now we are ready to define an $\ell^p$-version of twisted algebras, an analogue of those introduced in Section \ref{sec:twistedRoe}. In particular, the discussion below is very similar to that from Section \ref{sec:twistedRoe}.  Let $X=\bigsqcup_n X_n$ be a coarse disjoint union of finite sets, and $f\colon X\to E=\bigoplus E_n$ a coarse embedding, where the $E_n$'s are finite dimensional Euclidean space equipped with the $\ell^q$-metric. Let $H_{E_n}\coloneqq L^2(E_n,\Bigwedge^*E_n)$ be the Hilbert space of differential forms over $E_n$ with respect to the Euclidean metric $\|\cdot\|_2$ on $E_n$, and $H_E=\oplus H_{E_n}$. We denote 
$$\mathscr M_{P_r(X),E}=\ell^q(Z_r(X),\ell^q(\N,H\otimes H_{E})),$$
namely the space of $\ell^p$-functions on $Z_r(X)$ with values in $\ell^q(\N,H\otimes H_{E})$. Similarly, since $X=\bigsqcup_n X_n$ is a coarse disjoint union of finite set,  we only consider block-wise diagonal operators acting on $\mathscr M_{P_r(X),E}$. 

The Banach space $\mathscr M_{P_r(X),E}$ is equipped with actions of Borel functions from both $P_r(X)$ and  $E$. Hence we define the support and propagation of bounded operators acting on $\mathscr M_{P_r(X),E}$ along both $P_r(X)$ and $E$, respectively.
For notational simplicity, we will denote $P=P_r(X)$ and $Z=Z_r(X)$.

We define the twisted Roe algebra as follows.
\begin{definition}\label{def:Aalgebralp}
	We define $A_{alg}^p(P,E)$ to be the collection of uniformly continuous paths of operators $\{T_s\}_{s\geq 1}$, where $T_s\coloneqq (T_s^n)_{n\in\N}$ is a block-wise diagonal operator acting on $\mathscr M_{P,E}$, such that $T_s(x,y)$ is compact for any $x,y\in X$ and furthermore the following properties hold.
	\begin{enumerate}
		\item $\|T_s(x,y)\|$ is uniformly bounded on $X\times X\times[1,+\infty)$.
		\item $T_s$ has uniformly finite propagation along $P$. Equivalently, there exists $r_1>0$ such that $T_s(x,y)=0$ for all $s\geq 1$, provided that  $d(x,y)>r_1$. 
		\item the $\ell^q$-propagation of $T_s(x,y)$ is uniformly finite and tends to zero uniformly as $s\to\infty$ for all $x, y\in X$,  and
		\item \label{Aalgebra5lp} there exists $r_2>0$ such that the $E$-support of $T_s(x,y)$ is contained in the $\ell^q$-ball centered at $f(y)$ with radius $r_2$ for all $x,y\in X$.
	\end{enumerate} 
	The norm of $\{T_s\}_{s\geq 1}$ is defined by 
	$\sup_{s\geq 1}\|T_s\|$,  where $\|\cdot\|$ denote the operator norm on $\mathscr M_{P,E}$. The completion of $A^p_{alg}(P,E)$ with respect to this norm is denoted by $A^p(P,E)$.
\end{definition}

Similarly, we define $A_L^p(P,E)$ to be the completion of the algebra of norm-bounded and norm-continuous maps $\mathbb T \colon [1,\infty) \to A^p_{alg}(P,E)$ such that   the propagation  of $\mathbb T(t)$ along $P$ tends to zero, as $t\to \infty$, where the completion is taken with respect to the norm
\[ \|\mathbb T\| =  \sup_{t\geq 1} \|\mathbb T(t)\|.\]

Let $C^p(P,E)$ be the $\ell^p$-Roe algebra associated with the $\ell^p$-geometric module $\mathscr M_{P,E}$, namely locally compact operators acting on $\mathscr M_{P,E}$ with finite propagation along $P$. Clearly, there is a homomorphism
$$\imath\colon A^p(P,E)\to C^p(P,E)$$
induced by evaluation at $s=1$ and  forgetting the last condition \eqref{Aalgebra5lp} in Definition \ref{def:Aalgebralp}. 

The key reason for defining the $\ell^p$-Roe algebra by operators acting on the module $\ell^p(Z,\ell^p(\N,H))$ as Definition \ref{def:lpRoe} is that the following map
$$\id_*\colon K_*(C^p(P))\to K_*(C^p(P,E))$$
induced by tensoring with a fixed rank-one projection is an isomorphism. In fact, the two Banach algebras $C^p(P)$ and $C^p(P,E)$ are isomorphic, since an isomorphism $H\cong H\otimes H_E$ induces an isomorphism between the two $\ell^p$- geometric modules $\ell^p(Z,\ell^p(\N,H))$ and $\mathscr M_{P,E}=\ell^p(Z,\ell^p(\N,H\otimes H_E))$. 

Similar to the proof of Theorem \ref{thm:lqcBCIntro}, the proof of Theorem \ref{thm:lp-lqcBCIntro} consists of two parts: the isomorphism 
\[ \id_*\colon K_*(C^p(P))\to K_*(C^p(P,E)) \] factors through $K_*(A^p(P,E))$, and the coarse Baum--Connes conjecture holds for $A^p(P,E)$. To prove the first part, we similarly construct an $\ell^p$-version of the index map
$$\ind\colon K_*(C^p(P))\to K_*(A^p(P,E)).$$

Let $S=(S_n)_{n\in \N}$, where $S_n\colon X_n\to H\otimes H_{E_n}$ is a uniformly bounded operator-valued function on $X_n$. We set $1\otimes S$ to be the operator acting on $\mathscr M_{P,E}$. Note that $1\otimes S$ is bounded with $\|1\otimes S\|\leq \sup_{x\in X}\|S(x)\|$, and $1\otimes S$ has zero propagation. For simplicity, we still write $S$ for $1\otimes S$.
Recall the operator $\Phi_{s,v}$  from Proposition \ref{prop:phi}, and the path of block-wise diagonal operators $\Phi = (\Phi_s)_{s\geq 1}$ with 
$$\Phi_s\colon H_{P,E}\to H_{P,E}$$
given by
$$\Phi_s\xi=\Phi_{s+s_n,f(x)}(\xi) \textup{ for all } x\in X_n \text{ and }\xi\in (\chi_{B_x}\otimes 1)\mathscr M_{P,E}. $$
With respect to the even-odd grading on each $\Bigwedge^*E_n$, we have
$$\Phi=\begin{pmatrix}
	0&\Phi_-\\
	\Phi_+&0
\end{pmatrix}.$$

Let $\psi_{v,R}$ be the cut-off function  from Lemma \ref{lemma:cut-off}. For each $R>0$, we define a bounded operator 
$$\psi_R\colon H_{P,E}\to H_{P,E} $$
by 
$$\psi_R\xi=\psi_{f(x),R}\xi\textup{ for all } \xi\in (\chi_{B_x}\otimes 1)\mathscr M_{P,E}.$$

Now we consider a bounded operator $T$ acting on $\ell^p(P,\ell^p(\N,H))$. The operator $T\otimes 1$ acting on $\mathscr M_{P,E}=\ell^p(P,\ell^p(\N,H\otimes H_E))$ is still bounded and $\|T\otimes 1\|\leq K_G\|T\|$ by Theorem \ref{thm:VectorMarZygmundIneq}. Clearly $T\otimes 1$ is in general not locally compact, but has the same support and propagation along $P$ as the operator $T$. If there is no confusion arises, we will write $T$ for $T\otimes 1$.

Here we shall only give the construction of the map $\ind$ for $K_0$ groups, since  the case of $K_1$ groups follows easily by a standard  suspension argument. 
Let $\alpha$ be an element of $K_0( C^p(P))$. Then $\alpha=[\cP] - [\cP']$, where $\cP$ and $\cP'$ are idempotents in $\mathscr M_n(C^p(P)^+)$ such that  $\cP-\cP'\in \mathscr M_n(C^p(P))$. We may assume that $\cP\in C^p(P)$ without loss of generality, and explain the construction of the index map on the element $\cP$.

Let  $\cP\otimes 1\colon \mathscr M_{P,E} \to \mathscr M_{P,E}$ be the operator constructed from the operator $\cP$. For simplicity, we shall write $\cP$ in place of $\cP\otimes 1$ below.
Let us consider the following idempotent.
\begin{equation}\label{eq:indexlp}
	\cP_\Phi\coloneqq \begin{pmatrix}
		\cP-(\cP-\cP\Phi_+\cP\Phi_-)^2&(2\cP-\cP\Phi_+\cP\Phi_-)\cP\Phi_+(\cP-\cP\Phi_-\cP\Phi_+)\\ \cP\Phi_-(\cP-\cP\Phi_+\cP\Phi_-)&(\cP-\cP\Phi_-\cP\Phi_+)^2
	\end{pmatrix}
\end{equation}
where the above expression follows the standard formula for  the index class of the generalized Fredholm operator $\cP\otimes \Phi$.  For $R>0$, we define
\begin{align*}
	\cP_{\Phi,R}\coloneqq &\begin{pmatrix}
		\cP-\psi_R(\cP-\cP\Phi_+\cP\Phi_-)^2\psi_R&\psi_R(2\cP-\cP\Phi_+\cP\Phi_-)\cP\Phi_+(\cP-\cP\Phi_-\cP\Phi_+)\psi_R\\\psi_R\cP\Phi_-(\cP-\cP\Phi_+\cP\Phi_-)\psi_R&\psi_R(\cP-\cP\Phi_-\cP\Phi_+)^2\psi_R
	\end{pmatrix}.
\end{align*}
The following proposition shows that the map 
\[  [\cP] \mapsto [\cP_{\Phi,R}]-[\begin{psmallmatrix}
	\cP & 0 \\ 0 & 0 
\end{psmallmatrix}]\]
gives a well-defined homomorphism
$$\ind\colon K_*(C^*(P))\to K_*(A^p(P,E)).$$
\begin{proposition}\label{prop:indMaplp}
	If $\cP\in C^p(P)$ and $\cP_{\Phi,R}$ are as above, then
	$$\cP_{\Phi,R}-\begin{psmallmatrix}
		\cP & 0 \\ 0 & 0 
	\end{psmallmatrix}\in A(P,E)$$
	and there is $C>0$ depending only on $\|\cP\|$ such that $\|\cP_{\Phi,R}\|\leq C$. Furthermore, if $\cP\in C^p(P)$ and $\cP^2=\cP$, then there exist $R>0$ and a function $\Phi$ as in Proposition \ref{prop:phi} depending only on $\cP$ such that
	$$\|(\cP_{\Phi,R})^2-\cP_{\Phi,R}\|< \frac 1 4.$$
	Hence the formal difference $[P_{\Phi,R}]-[\begin{psmallmatrix}
		P & 0 \\ 0 & 0 
	\end{psmallmatrix}]$ defines a class in $K_0(A^p(P,E))$, denoted by $\ind([\cP])$.
\end{proposition}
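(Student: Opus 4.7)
The plan is to follow the proof of Proposition \ref{prop:indMap} verbatim in structure, with the sole new ingredient being the vector-valued Marcinkiewicz–Zygmund inequality (Theorem \ref{thm:VectorMarZygmundIneq}). Concretely, since $\Phi_s$ is diagonal in the $P$-coordinate (acting only by Hilbert-space operators on each $H_{E_n}$-fiber) and since $\psi_R$ is a bounded Borel function on $E$ acting diagonally in the same sense, both extend to bounded operators on $\mathscr M_{P,E}$ with operator norm equal to their Hilbert-space norm on $H_E$, namely $\leq 1$. By contrast, $\cP\in C^p(P)$ initially only acts on $\ell^p(Z,\ell^p(\N,H))$; tensoring it with the identity on $H_E$ yields a bounded operator $\cP\otimes 1$ on $\mathscr M_{P,E}$ precisely by Theorem \ref{thm:VectorMarZygmundIneq}, with $\|\cP\otimes 1\|\leq K_G\|\cP\|$. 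This is the single place where the $\ell^p$-geometry requires care; once it is in hand, every pointwise/matrix-entry estimate used in Proposition \ref{prop:indMap} carries over with at most an extra multiplicative constant coming from Lemma \ref{lemma:lpPropagation}(3).

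For the first assertion, I will bound $\|\cP_{\Phi,R}\|$ by combining $\|\Phi\|,\|\psi_R\|\leq 1$ with the Marcinkiewicz–Zygmund bound on $\cP\otimes 1$, which gives a constant $C$ depending only on $\|\cP\|$. To establish $\cP_{\Phi,R}-\begin{psmallmatrix}\cP & 0 \\ 0 & 0\end{psmallmatrix}\in A^p(P,E)$, I will use the identity
$$\cP\Phi\cP\Phi - \cP = \cP(\Phi^2-1) - \cP[\cP,\Phi]\Phi,$$
observe that every matrix entry of $\cP_{\Phi,R}-\begin{psmallmatrix}\cP & 0 \\ 0 & 0\end{psmallmatrix}$ is a polynomial in $\cP\Phi_\pm\cP\Phi_\mp - \cP$ sandwiched by bounded factors, and verify the four conditions of Definition \ref{def:Aalgebralp} for each summand: local compactness of entries follows from Proposition \ref{prop:phi}(1),(4); finite $P$-propagation follows from approximating $\cP$ by a finite-propagation element of $C^p_{alg}(P)$ (together with $P$-diagonality of $\Phi,\psi_R$); vanishing $\ell^q$-propagation as $s\to\infty$ follows from Proposition \ref{prop:phi}(7); and the $\ell^q$-support condition on $f(y)$ follows from Proposition \ref{prop:phi}(5)–(6) combined with Lemma \ref{lemma:cut-off}.

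For the near-idempotent estimate, I will mimic the $\ell^2$ argument step by step. Given an idempotent $\cP\in C^p(P)$ and $\delta>0$, I choose $\cQ\in C^p_{alg}(P)$ with $\|\cQ\|=\|\cP\|$, $\|\cP-\cQ\|<\delta$, finite propagation, and $\|\cQ^2-\cQ\|<\delta$. Since $\cP_\Phi$ is a genuine idempotent up to order $\delta$, it suffices to bound $\|\cQ_{\Phi,R}-\cQ_\Phi\|$, and this reduces to estimating norms like $\|\cQ(\Phi^2-1)-\psi_R\cQ(\Phi^2-1)\psi_R\|$ and $\|\cQ[\cQ,\Phi]\Phi-\psi_R\cQ[\cQ,\Phi]\Phi\psi_R\|$. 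The two commutator estimates driving the $\ell^2$ proof survive: using the matrix decomposition $\bigl([\cQ,\psi_R]\bigr)(x,y)=\cQ(x,y)(\psi_{f(x),R}-\psi_{f(y),R})$, Lemma \ref{lemma:cut-off} and the coarse embedding yield the pointwise bound $\|[\cQ,\psi_R](x,y)\|\leq 3R^{-1}\rho_+(r_1)\|\cQ(x,y)\|$, which upgrades to an $\ell^p$ operator-norm bound via Lemma \ref{lemma:lpPropagation}(3); similarly, $\|[\Phi,\psi_R]\|\to 0$ as $R\to\infty$ by Proposition \ref{prop:phi}(3) and Lemma \ref{lemma:cut-off}. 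The remaining terms involving $1-\psi_R$ times $\Phi^2-1$ or $[\cQ,\Phi]$ are handled by Proposition \ref{prop:phi}(5)–(6). Choosing $\varepsilon$, $\delta$ sufficiently small and $R$ sufficiently large forces $\|(\cP_{\Phi,R})^2-\cP_{\Phi,R}\|<1/4$.

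The main obstacle is the one already resolved by Theorem \ref{thm:VectorMarZygmundIneq}: extending $\cP$ to act boundedly on $\mathscr M_{P,E}$. Absent such an inequality, the passage from the Hilbert-space-valued module to a genuinely $\ell^p$-valued module would destroy control over even the basic norm $\|\cP_{\Phi,R}\|$, and every subsequent commutator estimate would collapse. Once Marcinkiewicz–Zygmund provides a uniform constant $K_G$ independent of $\cP$, the proof becomes a disciplined transcription of the $\ell^2$ argument, with each operator-norm bound expressed as a constant (depending only on $K_G$, $\|\cP\|$, and the bounded-geometry constant in Lemma \ref{lemma:lpPropagation}(3)) times the pointwise bound on matrix entries already established for the $\ell^2$ proof.
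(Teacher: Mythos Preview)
Your proposal is correct and follows essentially the same approach as the paper's proof: the only new input beyond the $\ell^2$ case is the vector-valued Marcinkiewicz--Zygmund inequality (Theorem \ref{thm:VectorMarZygmundIneq}) to bound $\cP\otimes 1$ on $\mathscr M_{P,E}$, after which the matrix-entry estimates from Proposition \ref{prop:indMap} are upgraded to $\ell^p$ operator-norm bounds via Lemma \ref{lemma:lpPropagation}(3), exactly as you describe. The paper's proof is in fact slightly terser, simply citing Proposition \ref{prop:phi} and Lemma \ref{lemma:lpPropagation}(3) at the relevant points and then deferring the remaining computation to the proof of Proposition \ref{prop:indMap}.
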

\begin{proof}
	The norm estimates for $\cP_{\Phi,R}$ and $\cP_{\Phi}$ follow from the construction of $\cP_{\Phi,R}$ and $\cP_{\Phi}$, part (1) of Proposition \ref{prop:phi} and Theorem \ref{thm:VectorMarZygmundIneq}.	By Proposition \ref{prop:phi}, we have
	 $\psi_R\cP(\Phi^2-1)\psi_R\in A^p(P,E)$ and $\psi_R[\cP,\Phi]\psi_R\in A^p(P,E)$. It follows that  $\cP_{\Phi,R}-\begin{psmallmatrix}
		\cP & 0 \\ 0 & 0 
	\end{psmallmatrix}\in A(P,E)$.
	
	Since $C^p_{alg}(P)$ is dense in $C^p(P)$, for any $\delta>0$, there exists $\cQ\in C^p_{alg}(P)$ such that $\|\cQ\|=\|\cP\|$ and $\|\cP-\cQ\|<\delta(2\|\cP\|+1)$. In particular, we have 
	\begin{itemize}
		\item $\sup_{x,y\in X}|\cQ(x,y)|<\infty$,
		\item $\|\cQ^2-\cQ\|<\delta$.
		\item the propagation of $\cQ$ is finite, hence $\cQ(x,y)=0$ if $d(x,y)>r_1$ for some $r_1>0$,
	\end{itemize}
	To show that $\|(\cP_{\Phi,R})^2-\cP_{\Phi,R}\|< 1/4$, it suffices to estimate $\|\cQ_{\Phi,R}-\cQ_\Phi\|$, where  
	\[  	\cQ_\Phi\coloneqq \begin{pmatrix}
		\cQ-(\cQ-\cQ\Phi_+\cQ\Phi_-)^2&(2\cQ-\cQ\Phi_+\cQ\Phi_-)\cQ\Phi_+(\cQ-\cQ\Phi_-\cQ\Phi_+)\\ \cQ\Phi_-(\cQ-\cQ\Phi_+\cQ\Phi_-)&(\cQ-\cQ\Phi_-\cQ\Phi_+)^2
	\end{pmatrix} \]
	and $\cQ_{\Phi,R}$ is defined by the same formula  as $\cP_{\Phi, R}$ but with $\cP$ replaced by $\cQ$.

	The matrix decomposition of $[\cQ,\psi_R]$ is given by
	$$\left([\cQ,\psi_R]\right)(x,y)=\cQ(x,y)(\psi_{f(x), R}-\psi_{f(y), R}).$$
	In particular, if  $\left([\cQ,\psi_R]\right)(x,y)\ne 0$, then 
	$$\|f(x)-f(y)\|_q\leq \rho_+(r_1)$$
	where $\rho_+$ is the function as given in Definition  \ref{def:coarseembed}. Therefore by Lemma \ref{lemma:cut-off}, we have
	$$\|\left([\cQ,\psi_R]\right)(x,y)\|\leq 3R^{-1}\rho_+(r_1).$$
	Therefore, by part (3) of Lemma \ref{lemma:lpPropagation}, we have
	$$[\cQ,\psi_R]\to 0\text{ as }R\to\infty.$$
	Furthermore, it follows from Lemma \ref{lemma:cut-off} and part (3) of Proposition \ref{prop:phi} that
	$$[\Phi,\psi_R]\to 0\text{ as }R\to\infty.$$
	Similarly, the matrix decomposition of  $(1-\psi_R)[\cQ,\Phi]$ is given by 
	$$\left( (1-\psi_R)[\cQ,\Phi]\right) (x, y)=(1-\psi_{f(x), R})(\Phi_{f(x)}-\Phi_{f(y)}) \cQ(x,y),$$
	Since $\cQ$ has finite propagation, it follows from part  (5)  of Proposition \ref{prop:phi} and part (3) of Lemma \ref{lemma:lpPropagation} that  $\|(1-\psi_R)[\cQ,\Phi]\|$ can be made arbitrarily small as long as  the number $\varepsilon$ from Proposition \ref{prop:phi} is chosen to be sufficiently small and $R$ is chosen to be sufficiently large. Now the proof of this proposition follows from the same computation in the proof of Proposition \ref{prop:indMap}.
\end{proof}

We complete the proof of Theorem \ref{thm:lp-lqcBCIntro}. 
\begin{proof}[Proof of Theorem \ref{thm:lp-lqcBCIntro}]
	The proof of Theorem \ref{thm:lp-lqcBCIntro} is completely similar to the proof of Theorem \ref{thm:lqcBC}. We shall be brief. By the same construction and Eilenberg swindle argument  in Proposition \ref{prop:composition}, we derive that the map 
	$$\imath_*\circ\ind\colon K_*(C^p(P))\to K_*(A^p(P,E))\to K_*(C^p(P,E))$$
	is equal to the isomorphism $\id_*\colon K_*(C^p(P))\to  K_*(C^p(P,E))$ induced by taking the tensor product with a rank-one projection. Now Theorem \ref{thm:lp-lqcBCIntro} reduces to the coarse Baum--Connes isomorphism for the twisted map
	$$\ev_*\colon \lim_{r\to\infty}K_*(A_L^p(P_r(X),E))\to K_*(A^p(P_r(X),E)).$$
	The proof of the fact that the above $\ev_*$ is an isomorphism  follows from the same construction and the Mayer--Vietoris argument as the $\ell^2$-case in Section \ref{sec:cbcTwist}. This finishes the proof of Theorem \ref{thm:lp-lqcBCIntro}.
\end{proof}

As a corollary of Theorem \ref{thm:lp-lqcBCIntro}, we see that for different $p\in[1,+\infty)$, the $K$-theory of the $\ell^p$-Roe algebra $K_*(C^p(X))$ is independent of $p$, as long as $X$ coarsely embeds into $\ell^q$.


\end{document}